\documentclass[11pt,reqno,twoside]{article}

\usepackage[hang]{footmisc}
\usepackage{lipsum}

\usepackage{cmap} % Load before fontenc 

\usepackage[T1]{fontenc}
\usepackage[utf8]{inputenc}
\usepackage{graphicx}
\usepackage{array}
\usepackage{placeins}
\usepackage{enumerate}
\usepackage{algcompatible}
\usepackage[most]{tcolorbox}
\usepackage{mdframed}
\usepackage{verbatim}
\newcommand{\comments}[1]{}

\usepackage{soul}

\usepackage{setspace}

\let\counterwithin\relax  %DSA: i had to include this to be able to compile
\usepackage{lmodern} % Improved version of computer modern

\usepackage{comment}

\usepackage{bm} % boldmath must be called after the package

\usepackage{bbold}

\usepackage{amsmath,amsbsy,amsgen,amscd,amsthm,amsfonts,amssymb}

\usepackage{float}       
\usepackage{algorithm,algpseudocode}
\algrenewcommand\algorithmicrequire{\textbf{Input:}} \algrenewcommand\algorithmicensure{\textbf{Output:}}

\usepackage[centering,top=1.1in,bottom=1.4in,left=1in,right=0.9in]{geometry}

\usepackage[sf,bf,compact]{titlesec}
\usepackage[mathscr]{euscript}

\newcommand{\mcO}{\mathcal{O}}
\newcommand{\mcG}{\mathcal{G}}
\newcommand{\mcA}{\mathcal{A}}

\newcommand{\mcE}{\mathcal{E}}

\newcommand{\mcR}{\mathcal{R}}

\usepackage{enumitem}
\usepackage[dvipsnames]{xcolor}

\definecolor{dark-gray}{gray}{0.3}
\definecolor{dkgray}{rgb}{.4,.4,.4}
\definecolor{dkblue}{rgb}{0,0,.5}
\definecolor{medblue}{rgb}{0,0,.75}
\definecolor{rust}{rgb}{0.5,0.1,0.1}

\usepackage{url}
\usepackage[colorlinks=true]{hyperref}
\hypersetup{linkcolor=dkblue}    
\hypersetup{citecolor=rust}      
\hypersetup{urlcolor=rust}     

\usepackage[final]{microtype} 

\newtheoremstyle{myThm} % name
    {\topsep}                    % Space above
    {\topsep}                    % Space below
    {\itshape}                   % Body font
    {}                           % Indent amount
    {\sffamily\bfseries}                   % Theorem head font
    {.}                          % Punctuation after theorem head
    {.5em}                       % Space after theorem head
    {}  % Theorem head spec (can be left empty, meaning ‘normal’)

\newtheoremstyle{myRem} % name
    {\topsep}                    % Space above
    {\topsep}                    % Space below
    {}                   % Body font
    {}                           % Indent amount
    {\sffamily}                   % Theorem head font
    {.}                          % Punctuation after theorem head
    {.5em}                       % Space after theorem head
    {}  % Theorem head spec (can be left empty, meaning ‘normal’)

\newtheoremstyle{myDef} % name
    {\topsep}                    % Space above
    {\topsep}                    % Space below
    {}                   % Body font
    {}                           % Indent amount
    {\sffamily\bfseries}                   % Theorem head font
    {.}                          % Punctuation after theorem head
    {.5em}                       % Space after theorem head
    {}  % Theorem head spec (can be left empty, meaning ‘normal’)

\theoremstyle{myThm}
\newtheorem{theorem}{Theorem}[section]
\newtheorem{lemma}[theorem]{Lemma}
\newtheorem{proposition}[theorem]{Proposition}

\newtheorem{definition}[theorem]{Definition}

\theoremstyle{myRem}

\theoremstyle{myRem}
\newtheorem{remarkx}[theorem]{Remark}
 \newenvironment{remark}
  {\pushQED{\qed}\remarkx}
  {\popQED\endremarkx}
\usepackage{fancyhdr}
\let\originalleft\left
\let\originalright\right
\renewcommand{\left}{\mathopen{}\mathclose\bgroup\originalleft}
\renewcommand{\right}{\aftergroup\egroup\originalright}

\usepackage{mathtools}
\mathtoolsset{centercolon}  % Makes := typeset correctly for definitions

\newcommand{\circnum}[1]{\raisebox{.5pt}{\textcircled{\raisebox{-.2pt}{\scriptsize #1}}}}

\providecommand{\mathbbm}{\mathbb} % In case we don't load bbm

\newcommand{\R}{\mathbbm{R}}
\newcommand{\E}{\mathbbm{E}}

\renewcommand{\mcR}{\mathcal{R}}

\newcommand{\mcM}{\mathcal{M}}
\newcommand{\mcB}{\mathcal{B}}

\newcommand{\bfF}{\mathbf{F}}
\newcommand{\bfU}{\mathbf{U}}
\newcommand{\bfW}{\mathbf{W}}
\newcommand{\bfA}{\mathbf{A}}

\newcommand{\bfB}{\mathbf{B}}
\newcommand{\bfC}{\mathbf{C}}
\newcommand{\bfD}{\mathbf{D}}

\newcommand{\bfG}{\mathbf{G}}

\newcommand{\bfZ}{\mathbf{Z}}
\newcommand{\bfu}{\mathbf{u}}
\newcommand{\bff}{\mathbf{f}}
\newcommand{\bfh}{\mathbf{h}}
\newcommand{\bfw}{\mathbf{w}}

\newcommand{\bfy}{\mathbf{y}}
\newcommand{\bfx}{\mathbf{x}}

\newcommand{\mcN}{\mathcal{N}}
\newcommand{\mcS}{\mathcal{S}}
\newcommand{\mcL}{\mathcal{L}}
\newcommand{\mcH}{\mathcal{H}}

\newcommand{\tr}{\mathrm{Tr}}   % trace operator: \tr(A)
\newcommand{\Cost}{\mathsf{Cost}}
\newcommand{\ErrorOVB}{\mathsf{Error\text{-}OVB}}
\newcommand{\Var}{\mathsf{Var}}

\newcommand{\ErrorVar}{\mathsf{Error\text{-}Var}}
\newcommand{\ErrorBias}{\mathsf{Error\text{-}Bias}}

\newcommand{\ErrorEstimation}{\mathsf{Error\text{-}Estimation}}

\definecolor{mygreen}{rgb}{0.13,0.55,0.13}

\newcommand{\Prob}{\operatorname{\mathbbm{P}}}

\newcommand{\iid}{\stackrel{\text{i.i.d.}}{\sim}}

\newcommand{\dkl}{D_{\mbox {\tiny{\rm KL}}}}

\usepackage[font = small, margin=30pt]{caption}
\usepackage[]{algorithm}
\usepackage{enumerate}

\usepackage{graphicx}

\usepackage{authblk}
\usepackage{chngcntr}
\usepackage{mathrsfs} 
\counterwithin{table}{section}
\counterwithin{algorithm}{section}

\usepackage[normalem]{ulem}

\usepackage{tikz}
\usetikzlibrary{shapes,arrows,arrows.meta, decorations.pathmorphing,backgrounds,positioning,fit,petri}
\usetikzlibrary{calc}
\usetikzlibrary{matrix}
\usetikzlibrary{backgrounds}
\usetikzlibrary{shapes.geometric}
\usetikzlibrary{patterns}
\usepackage{pgfplots}
\pgfplotsset{compat=newest}
\usepgfplotslibrary{groupplots}
\pgfdeclarelayer{background}
\pgfdeclarelayer{bBack}
\pgfsetlayers{bBack,background,main}
\usetikzlibrary{fit}

\usepackage{subcaption}

\title{\huge Optimal Multiscale Learning of Linear Operators}

\author{Jiaheng Chen and Daniel Sanz-Alonso}

\date{University of Chicago}

\vspace{.5in}

\makeatletter\@addtoreset{section}{part}\makeatother%
\numberwithin{equation}{section}

\newcommand{\upperRomannumeral}[1]{\uppercase\expandafter{\romannumeral#1}}

\begin{document}
\maketitle %  LEAVE HERE
% The command above causes the title to be displayed.

%>>>>> DELETE ALL CONTENT UNTIL "\end{document}"
% This is the body of your document.

\renewcommand{\thefootnote}{\fnsymbol{footnote}}

% \footnotetext[1]{Committee on Computational and Applied Mathematics, University of Chicago}
% \footnotetext[2]{Department of Statistics, University of Chicago}

\vspace{-2em}
\abstract{We study the statistical and computational limits of learning bounded linear operators between Sobolev spaces from noisy input--output data. In wavelet coordinates, the problem is recast as an infinite-dimensional matrix regression problem with a heterogeneous two-sided multiscale structure. We establish minimax rates under Sobolev operator-norm loss and construct a finite-resolution blockwise
least-squares estimator attaining these rates.
The analysis reveals a nonuniform local estimation difficulty across scales, which can be exploited algorithmically: by assigning scale-adaptive sample sizes, the estimator achieves the optimal computational cost among dense least-squares implementations.
}

\medskip
\noindent\textbf{Keywords.} Operator learning; Sobolev spaces; minimax rates; computational complexity
\par\smallskip
\noindent\textbf{MSC codes.} 62G05; 47A58; 65Y20

\bigskip 

\section{Introduction}

Operator learning is an emerging paradigm at the intersection of scientific computing and machine learning \cite{li2020fourier,anandkumar2020neural,lu2021deepxde,lu2021learning,pathak2022fourcastnet,kovachki2023neural,li2024physics,boulle2024mathematical,kovachki2024operator,subedi2025operator}. Its central objective is to learn maps between function spaces from data,  a setting that arises, for instance, in the approximation of solution operators, parameter-to-observable maps, and time-evolution maps for dynamical systems. In a supervised learning formulation, one observes noisy input--output pairs $\{(u_i,f_i)\}_{i=1}^{N}$ satisfying
\begin{align}\label{eq:intro_continuous_model}
f_i = \mcA u_i + w_i, \qquad 1 \le i \le N,
\end{align}
where $\mcA$ is an unknown operator and $\{w_i\}_{i=1}^{N}$ models observational noise. The goal is to design an estimator of $\mcA$ that is statistically accurate and computationally efficient.

This paper studies this problem for bounded linear operators between Sobolev
spaces on a $d$-dimensional domain. For fixed Sobolev indices $s,s'$ and radius $R>0$, we consider the operator class
\[
\mcO(s,s',R)
:=
\bigl\{
\mcA\in \mcL(H^s,H^{-s'}):
\|\mcA\|_{H^s\to H^{-s'}}\le R
\bigr\}.
\]
The inputs and noise are Gaussian fields,
\(u_i\sim \mathcal{GP}(0,(I-\Delta)^{-r_1})\) and
\(w_i\sim \mathcal{GP}(0,(I-\Delta)^{-r_2})\). The estimation error is
measured in the Sobolev operator norm \(H^t\to H^{-t'}\), where \(t>s\) and
\(t'>s'\). This model imposes no PDE-specific, geometric, or sparsity structure on the target operator. Instead, the problem is governed by the interaction between the Sobolev scales of the operator class, the input distribution, the noise model, and the loss norm; this structure alone is sufficient to obtain polynomial statistical and computational rates.

Our first main result, Theorem~\ref{thm:main1}, establishes the minimax rate up
to logarithmic factors:
\begin{align*}
\inf_{\widehat{\mcA}}\sup_{\mcA\in \mcO(s,s',R)} \E\, \|\widehat{\mcA}-\mcA\|_{H^{t}\to H^{-t'}} \, \asymp_{\log} \, N^{-\min\big\{\frac{1}{2},\frac{t-s}{2(r_1-s)+d},\frac{t'-s'}{(2(-r_2-s')+d)_{+}}\big\}}.
\end{align*}
The three terms in the exponent correspond to a coarse-scale parametric
contribution, an input-side obstruction caused by weak excitation of
high-frequency input directions, and an output-side obstruction caused by estimating rough output components from noisy observations.
Thus the rate reflects a
two-sided multiscale bias--variance geometry.

The upper bound is constructive. In wavelet coordinates, the problem becomes an
infinite-dimensional matrix regression problem with a diagonally weighted
operator-norm loss. 
We construct a finite-resolution blockwise least-squares estimator that attains the minimax rate while estimating a minimal collection of wavelet blocks. A nested-support regression step controls the omitted-variable bias arising from interactions between
retained and discarded input scales. 
The estimator requires only finite-resolution observations of the training input--output pairs, with the required resolution determined by the sample size. The minimax lower bound is obtained by applying Assouad's lemma to three thin-strip
perturbation families in the wavelet matrix model. These perturbations isolate
the coarse-scale, input-side, and output-side obstructions, respectively, and
show that each term in the minimax exponent is unavoidable.

Our second main contribution is computational. The local estimation difficulty
is highly nonuniform across scales: some blocks determine the global minimax
rate, while others can be estimated to the same target accuracy using fewer
samples. We exploit this inhomogeneity through scale-adaptive sample sizes.
Proposition~\ref{prop:cost} shows that, if the target accuracy is
\(\varepsilon\), then the cost of our estimator satisfies
\[
\Cost(\varepsilon)
\lesssim_{\log}
\varepsilon^{-\max\big\{
2
+
\frac{2d}{\min\{r_1,t\}-s}
+
\frac{(2(r_1-t)+d)_{+}}{t-s},
\ 
\frac{d+(2(-r_2-s')+d)_{+}}{t'-s'}
\big\}}.
\]
The two terms in this maximum correspond to input-side and output-side
computational contributions. We further explain why these contributions are
unavoidable within the dense least-squares framework. In addition, we show that the statistical and computational bottlenecks need not coincide, highlighting that minimax rates and computational cost provide complementary perspectives on the operator learning problem.

The rest of the paper is organized as follows. The remainder of the
introduction reviews related work and sets notation. Section~\ref{sec:main_results} states and discusses the main results. Section~\ref{sec:main_idea} develops the four main ideas underlying our analysis and estimator construction: the wavelet-coordinate matrix formulation, the two-sided
bias--variance geometry, the identification of a minimal collection of wavelet blocks sufficient for attaining the minimax rate, and scale-adaptive
sample usage. Section~\ref{sec:convergencerate} constructs the estimator and
proves the minimax upper and lower bounds. Section~\ref{sec:computation}
analyzes the computational cost and discusses its optimality. Appendix~\ref{app:convergencerate} contains auxiliary estimates used in the
proofs.

\subsection{Related work}

We focus on the works most closely related to the present paper, and refer the
reader to recent surveys for broader perspectives on operator learning
\cite{boulle2024mathematical,kovachki2024operator,subedi2025operator,
brugiapaglia2026short}.

A growing body of work studies approximation, statistical, and computational
complexity for learning operators between function spaces. General complexity
results for broad Lipschitz-type classes reveal intrinsic limitations and curse
of dimensionality phenomena
\cite{schwab2023deep,adcock2024sample,kovachki2024data,
lanthaler2024operator,adcock2025towards}. On the other hand, more structured
classes, such as holomorphic operators and holomorphic maps arising in
parametric PDEs, admit algebraic approximation and recovery rates under
anisotropy, summability, or source-type assumptions
\cite{cohen2015approximation,herrmann2022neural,adcock2024optimal,
adcock2025optimal,reinhardt2024statistical}. Our results identify a
complementary polynomial-rate regime: bounded linear operators between Sobolev
spaces form a broad infinite-dimensional nonparametric class, but still admit
polynomial minimax rates under Sobolev operator-norm loss. These rates are
obtained from linearity and Sobolev-scale boundedness, without holomorphic,
source-type, or sparsity assumptions on the target operator.

Closer to our setting are works on learning linear operators and related
infinite-dimensional regression problems. Linear operator learning has been
studied for diagonal or spectrally structured operators
\cite{de2023convergence}, through an infinite-dimensional regression
formulation as a non-compact inverse problem \cite{mollenhauer2022learning},
in online settings \cite{subedi2024online}, and in kernel/operator estimation
problems \cite{jin2022minimax,zhang2025minimax,bigot2019estimation}. In particular,
\cite{jin2022minimax} studies minimax optimal kernel operator learning for
Hilbert--Schmidt operators between Sobolev reproducing kernel Hilbert spaces,
under capacity, embedding, and source-type assumptions. In contrast, we study
the operator-norm ball \(\mcO(s,s',R)\) under the Sobolev operator norm
\(H^t\to H^{-t'}\). This worst-case loss leads to a two-sided multiscale
geometry in wavelet coordinates. Moreover, our estimator uses finite-resolution
coefficient data and blockwise least squares with scale-adaptive sample usage.
We derive an explicit computational cost for this estimator and show its optimality within the dense least-squares framework. This finite-resolution
computational viewpoint is absent from \cite{jin2022minimax}, whose estimator
depends on full functional data through empirical covariance and
cross-covariance operators.

Another closely related line of work exploits PDE or geometric structure in
the target operator. Examples include learning solution operators for elliptic,
parabolic, and hyperbolic PDEs
\cite{schafer2024sparse,boulle2023learning,boulle2023elliptic,
boulle2022learning,wang2025operator}, pseudo-differential operators
\cite{chen2026convergence}, and geometric integral or scattering operators
\cite{roddenberry2025rates,balaji2026hybrid,su2025learning}. These structured
settings can yield sharp rates by exploiting additional information about the
operator class. Our work is complementary: we do not assume that the operator
comes from a particular PDE, symbol class, kernel geometry, or scattering
model. Instead, we analyze the statistical and computational limits generated
by linearity and Sobolev-scale boundedness. A central feature of the resulting theory is
that the minimax statistical rate and the computational cost needed to attain
it may be governed by different multiscale bottlenecks.

\subsection{Notation}
Throughout the paper, we work on the $d$-dimensional torus
$\mathbb T^d=(\mathbb R/\mathbb Z)^d$ to avoid boundary effects. Similar
formulations on domains with boundary are possible after choosing appropriate
boundary conditions. For $s\in\mathbb R$, let
$H^s(\mathbb T^d)$ denote the usual $L^2$-based Sobolev space on
$\mathbb T^d$. We use $L^2(\mathbb T^d)$ as the pivot space, and identify
the dual of $H^s(\mathbb T^d)$ with $H^{-s}(\mathbb T^d)$ through the
$L^2$ pairing. Thus, if $f\in H^{-s}(\mathbb T^d)$ and
$g\in H^s(\mathbb T^d)$, we write
\[
\langle f,g\rangle
:=
\langle f,g\rangle_{H^{-s}(\mathbb T^d),H^s(\mathbb T^d)}.
\]
This agrees with the usual $L^2$ inner product when $f$ and $g$ belong to $L^2(\mathbb T^d)$. For notational simplicity, we write $H^s$ in place of $H^s(\mathbb T^d)$ throughout the paper, and we omit the subscript in the duality pairing whenever no ambiguity can arise.

For a vector \(x\in\mathbb R^m\), we denote by \(\|x\|_2\) its Euclidean
norm. For a matrix \(B\in\mathbb R^{m_1\times m_2}\), we write \(\|B\|\)
for its spectral norm and \(\|B\|_{\mathrm{F}}\) for its Frobenius norm. More
generally, for a matrix \(B\in \mathbb R^{\Lambda_1\times \Lambda_2}\)
indexed by sets \(\Lambda_1,\Lambda_2\), possibly infinite, we interpret
\(B\) as a linear operator from \(\ell^2(\Lambda_2)\) to
\(\ell^2(\Lambda_1)\), and denote by \(\|B\|\) its operator norm and by
\(\|B\|_{\mathrm{HS}}\) its Hilbert--Schmidt norm. For a symmetric positive semi-definite matrix, we denote by $\sigma_{\min}(B)$ and $\sigma_{\max}(B)$ its smallest and largest eigenvalues, respectively.

For \(a\in\mathbb R\), we write \(a_+:=\max\{a,0\}\), and we use the convention
\(a/0=+\infty\) for \(a>0\). Given two positive sequences $\{a_k\}$ and $\{b_k\}$, we write $a_k \lesssim b_k$ if there exists a constant $c>0$, independent of $k$, such that $a_k \le c\, b_k$ for all $k$. If both $a_k \lesssim b_k$ and $b_k \lesssim a_k$ hold, we write $a_k \asymp b_k$. If the constant $c$ depends on some parameter $\tau$, we write $a_k \lesssim_{\tau} b_k,b_k\lesssim_{\tau} a_k$, and $a_k\asymp_{\tau} b_k$ to indicate this dependence.

\section{Main results}\label{sec:main_results}

For $s,s'\in\mathbb R$, let $\mcL(H^s,H^{-s'})$ denote the space of bounded
linear operators from $H^s$ to $H^{-s'}$, equipped with the operator norm
\[
\|\mcA\|_{H^s\to H^{-s'}}
:=
\sup_{\|w\|_{H^s}\le 1,\ \|v\|_{H^{s'}}\le 1}
|\langle \mcA w,v\rangle|,
\]
where the duality pairing is between $H^{-s'}$ and $H^{s'}$. For $R>0$,
define
\[
\mcO(s,s',R)
:=
\bigl\{
\mcA\in \mcL(H^s,H^{-s'}):
\|\mcA\|_{H^s\to H^{-s'}}\le R
\bigr\}.
\]
Throughout the paper, the radius $R>0$ is fixed, and implicit constants may
depend on $R$ and on the fixed regularity parameters $s$ and $s'$.

We observe input--output data pairs $\{(u_i,f_i)\}_{i=1}^{N}$ generated by
\begin{equation}\label{eq:continuous_model}
f_{i}=\mcA u_{i}+w_{i},\qquad 1\le i\le N,
\end{equation}
where $\mcA\in \mcO(s,s',R)$ is unknown, 
\[
u_{i}\iid \mathcal{GP}(0,(I-\Delta)^{-r_1}),  \qquad  w_i\iid \mathcal{GP}(0,(I-\Delta)^{-r_2}),
\]
for some $r_1,r_2\in \R$, and the input and noise sequences $\{u_i\}$ and $\{w_i\}$ are independent. The Gaussian fields above are understood through the standard spectral construction associated with the Laplacian; in particular, a field with covariance $(I-\Delta)^{-r}$ belongs almost surely to $H^\alpha$ for every $\alpha<r-d/2$ \cite{cox2020regularity,herrmann2020multilevel}. The equality in \eqref{eq:continuous_model} is understood in the distributional sense. To ensure that $\mcA u_i$ is well-defined, we assume $r_1-d/2>s$. Then $u_i\in H^{r_1-d/2-\varepsilon}\hookrightarrow H^s$ almost surely for some $\varepsilon>0$, and hence $\mcA u_i$ is a well-defined element of $H^{-s'}$. 

Given the data $\{(u_i,f_i)\}_{i=1}^N$, the goal is to estimate the unknown operator $\mcA$. We measure estimation error in the Sobolev operator norm $\|\cdot\|_{H^t\to H^{-t'}}$, where $t>s$ and $t'>s'$. The embeddings $H^t\hookrightarrow H^s$ and $H^{-s'}\hookrightarrow H^{-t'}$ ensure that this norm is well-defined on $\mcO(s,s',R)$. Our main theorem gives the minimax risk for this linear operator learning problem.

\begin{theorem}[Minimax risk]\label{thm:main1}
  Let $s,s',r_1,r_2\in\R$ with $r_1-d/2>s$, and let $t>s$ and $t'>s'$. Under the observation model \eqref{eq:continuous_model} with the above input and noise
distributions,
\begin{align*}
\inf_{\widehat{\mcA}}\sup_{\mcA\in \mcO(s,s',R)} \E\, \|\widehat{\mcA}-\mcA\|_{H^{t}\to H^{-t'}} \, \asymp_{\log} \, N^{-\min\big\{\frac{1}{2},\frac{t-s}{2(r_1-s)+d},\frac{t'-s'}{(2(-r_2-s')+d)_{+}}\big\}}.
    \end{align*}
Here the infimum is over all measurable estimators based on $\{(u_i,f_i)\}_{i=1}^N$, and the expectation is taken under the law of the data generated by \eqref{eq:continuous_model} with true operator $\mcA$. The notation $\asymp_{\log}$ denotes equivalence up to powers of $\log N$.
\end{theorem}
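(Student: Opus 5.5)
We will work entirely in wavelet coordinates on $\mathbb T^d$. Fix a sufficiently regular orthonormal periodic wavelet basis $\{\psi_\lambda\}$, with levels $|\lambda|=j$ and about $2^{jd}$ indices per level. Sobolev norms are then diagonal weights up to constants, $\|v\|_{H^\alpha}^2\asymp\sum_\lambda 2^{2\alpha|\lambda|}|v_\lambda|^2$.

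\textbf{Reduction to a matrix model.} Write $A_{\mu\lambda}=\langle \mcA\psi_\lambda,\psi_\mu\rangle$, and set
\[
D_\alpha=\mathrm{diag}(2^{\alpha|\lambda|}).
\]
Then $\mcA\in\mcO(s,s',R)$ is equivalent, up to constants, to $\|D_{-s'}AD_{-s}\|\le R$. The loss becomes $\|D_{-t'}(\widehat A-A)D_{-t}\|$. The Gaussian inputs and noise have coefficient covariances equivalent to $D_{-2r_1}$ and $D_{-2r_2}$. These are not exactly diagonal, but they are spectrally equivalent on each finite block, and we will use this equivalence throughout. The model is therefore $F=AU+W$, where $U$ has whitened columns $D_{-r_1}Z$ and $W=D_{-r_2}Z'$.

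\textbf{Upper bound.} We truncate the input at level $J_1$ and the output at level $J_2$. For each output level $j'\le J_2$ we regress the output coefficients at level $j'$ on all input coefficients of levels $\le J_1$ by least squares. The dropped input levels act as extra noise, which is the omitted-variable bias. The paper's nested-support step handles this, but a crude bound already suffices: that extra noise has covariance of size at most $R^2 2^{2s'j'}2^{-2(r_1-s-d/2)J_1}$, suitably summed.

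The error splits into three parts.
\begin{itemize}
\item \emph{Truncation bias.} Discarding input levels $>J_1$ costs $\|D_{-t'}AD_{-t}P_{>J_1}\|\lesssim 2^{-(t-s)J_1}$, and symmetrically discarding output levels costs $2^{-(t'-s')J_2}$.
\item \emph{Variance of block $(j',j)$.} The relevant Gram block has smallest eigenvalue about $N2^{-2r_1 j}$ once $N\gtrsim 2^{J_1 d}$; this condition needs a matrix concentration bound, e.g.\ Gaussian covariance estimation. Combining this with noise $2^{-r_2j'}$, the estimated block has operator-norm error about
\[
2^{-r_2j'}2^{r_1j}N^{-1/2}\,(2^{jd/2}+2^{j'd/2})\sqrt{\log N}.
\]
After weighting, this gives $2^{-(t'+r_2)j'}2^{(r_1-t)j}N^{-1/2}(\cdots)$.
\item \emph{Regression on true signal.} We must also account for the signal from the true operator through regression, using $\|A\|_{H^s\to H^{-s'}}\le R$.
\end{itemize}
We then sum over levels; losing a factor $J_1J_2\sim\log^2N$ is acceptable. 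Choosing $2^{J_1}$ so that $2^{-(t-s)J_1}$ balances the input-side variance gives $N^{-(t-s)/(2(r_1-s)+d)}$. Here the relevant input-side quantity is really $2^{(r_1-s)j}2^{jd/2}N^{-1/2}$ times output factors. The step I expect to need the most care is where the $s$ (rather than $t$) enters: a nonzero weighting must come through the omitted-variable and concentration terms. Similarly, choosing $J_2$ gives $N^{-(t'-s')/(2(-r_2-s')+d)_+}$. If the exponent $2(-r_2-s')+d$ is nonpositive, then $J_2=\infty$ is harmless and the output side is parametric. Coarse levels give $N^{-1/2}$. I would first do the bookkeeping with explicit level-pair sums, then optimize.

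\textbf{Lower bound.} We apply Assouad's lemma three times, once per term.
\begin{itemize}
\item \emph{Coarse term.} Perturb a single coarse entry by $\pm N^{-1/2}$; this is a two-point Le Cam argument.
\item \emph{Input-side term.} Fix one coarse output index $\mu_0$ and perturb the row $A_{\mu_0\lambda}=\delta\,\omega_\lambda$ over all $\lambda$ at input level $j$. We need $\|D_{-s'}AD_{-s}\|\lesssim\delta 2^{-sj}2^{jd/2}\le R$. The KL divergence per flipped coordinate is about $N\delta^2 2^{-2r_1 j}/\sigma^2_{\mu_0}$. The loss is $\delta2^{-tj}2^{jd/2}$, and a rank-one row perturbation has operator norm equal to the row $\ell^2$ norm, so Assouad's sum yields the right order. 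Setting the per-coordinate KL to about $1$ and saturating the constraint gives $2^{j(2(r_1-s)+d)}\asymp N$. Hence the loss is $2^{-(t-s)j}=N^{-(t-s)/(2(r_1-s)+d)}$.
\item \emph{Output-side term.} Symmetrically, perturb a column at output level $j'$ against a coarse input. The per-coordinate KL is $N\delta^2/2^{-2r_2j'}$, and the constraint is $\delta2^{-s'j'}2^{j'd/2}\le R$. This gives $N^{-(t'-s')/(2(-r_2-s')+d)}$ when that exponent is positive, and nothing otherwise.
\end{itemize}
For the Assouad step I would verify that the strip perturbations satisfy the loss separability, i.e.\ that the loss dominates the Hamming distance up to a factor $2^{-jd/2}$, using the rank-one structure. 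Combining the three lower bounds with the upper bound proves the theorem up to logarithmic factors.
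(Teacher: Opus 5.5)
Your lower bound is essentially the paper's: Assouad applied to a single coarse entry and to one-row/one-column strips in the blocks $(0,J)$ and $(J,0)$, with KL $\asymp \frac N2\|\bfD^{r_2}(\cdot)\bfD^{-r_1}\|_{\mathrm{HS}}^2$. That part is fine.

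\textbf{Main gap: the rectangular truncation fails.} Estimating every block in the rectangle $\{j\le J_1\}\times\{j'\le J_2\}$ from all $N$ samples does not attain the rate. On the good-design event, Chevet gives a two-sided weighted least-squares error for block $(j,j')$ of order
\[
N^{-1/2}\,2^{j(r_1-t)+j'(-r_2-t')+\frac d2\max\{j,j'\}}.
\]
Over the rectangle this is maximized at a vertex, and at the corner $(J_1,J_2)$ the two ``excess'' exponents $r_1-t$ and $-r_2-t'$ add. Balancing each side separately never detects this. Summing over level pairs with a $\log^2 N$ loss does not help either, since the problem is the maximum.

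Concretely, take $d=1$, $s=s'=0$, $t=t'=1$, $r_1=3$, $r_2=-3$. Then $\gamma=1/7$, and both of your choices (balancing, or matching truncation bias to $N^{-\gamma}$) give $J_1=J_2\approx\frac17\log_2 N$. The corner block then has weighted error $\asymp N^{-1/2}N^{(2+2+1/2)/7}=N^{1/7}$. Since the loss dominates every weighted block, the estimator is not even consistent.

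The missing idea is to estimate only the hyperbolic (triangular) set $j(t-s)+j'(t'-s')\le\gamma\log_2 N$ and set all other blocks to zero.
\begin{itemize}
\item The discarded bias is $\lesssim N^{-\gamma}$ by a block Schur test, because block $(j,j')$ has weighted size $\lesssim 2^{-j(t-s)-j'(t'-s')}$.
\item On the triangle, the variance exponent above is convex, so it is maximized at the three vertices.
\item Each vertex is within $N^{-\gamma}$ precisely by the definition of $\gamma$.
\end{itemize}

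\textbf{Second gap: the omitted-variable bias.} Your crude bound treats $\bfU_{>J_1}(\bfA^\top)_{>J_1,j'}$ as independent extra noise. That requires $\bfC_u$ to be block-diagonal across the cut, which is the situation of the paper's appendix on diagonal input covariance. Under only the spectral equivalence $\bfC_u\asymp\bfD^{-2r_1}$, which is all you invoke, a level-$0$ and a level-$L$ coordinate may have preconditioned correlation of order one.

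The population regression coefficient then creates a deterministic bias $\asymp 2^{-J_1(r_1-s)}$ in a coarse coordinate. This bias is damped neither by the $H^t$ weight nor by $N$. When $r_1<t$ it exceeds $N^{-\gamma}$. For example, $d=1$, $s=s'=0$, $r_1=1$, $t=2$, $r_2=0$ with $t'$ large gives $\gamma=1/2$. Your two choices of $J_1$ then give coarse biases of order $N^{-1/4}$ or $N^{-1/3}$.

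You need the nested-support step: regress on levels up to $\widetilde J(j')$, defined with $\min\{r_1,t\}-s$, and keep only the triangle. The Schur complement together with restriction-compatible diagonal preconditioning shows that the variance order is unchanged. Alternatively, you would have to prove fast cross-level decay of $(I-\Delta)^{-r_1}$ in a regular wavelet basis.

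\textbf{Two smaller points.}
\begin{itemize}
\item Least squares with Gaussian design only gives high-probability control. Reaching the in-expectation statement requires truncating ill-conditioned or large-data events.
\item Your concern about where $s$ enters is unnecessary. It enters through the truncation bias $2^{-(t-s)J_1}$ balanced against $N^{-1/2}2^{(r_1-t+d/2)J_1}$, and $t$ cancels in that balance.
\end{itemize}
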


\begin{proof}
For the upper bound, Proposition~\ref{prop:upper_bound} establishes a high-probability error bound, at the minimax rate up to logarithmic factors,
for the estimator constructed in Subsection~\ref{subsec:minimax_upper}. Since
this estimator depends on the confidence parameter, Appendix~\ref{app:truncated_estimator}
introduces a stability-truncated version; Lemma~\ref{lemma:truncated_expectation}
shows that the truncated estimator satisfies the corresponding in-expectation
bound. The minimax lower bound is proved in Proposition~\ref{prop:lower_bound}.
Combining the upper and lower bounds proves the theorem.
\end{proof}

\begin{table}[htbp]
\centering
\small
\renewcommand{\arraystretch}{1.5}
\setlength{\tabcolsep}{15pt}
\begin{tabular}{|c|c|}
\hline
\textbf{Parameter} & \textbf{Description} \\
\hline
$N$ & Sample size: number of input--output data pairs $\{(u_i,f_i)\}_{i=1}^N$ \\
\hline
$d$ & Spatial dimension of the torus $\mathbb{T}^d$ \\
\hline
$s, s'$ & Input/output Sobolev index of the operator $\mcA: H^{s}\to H^{-s'}$ \\
\hline
$t,t'$ & Input/output Sobolev index in the error norm $\|\cdot\|_{H^{t}\to H^{-t'}}$ \\
\hline
$r_1,r_2$ & \begin{tabular}[c]{@{}l@{}}
Regularity of the input function/noise: $u\in H^{(r_1-d/2)-}, w\in H^{(r_2-d/2)-}$
\end{tabular} \\
\hline
\end{tabular}
\caption{Summary of key parameters.}
\label{tab:key_parameters}
\end{table}

We make a few remarks on Theorem \ref{thm:main1}.

\begin{remark}[Computational cost]
The upper bound in Theorem~\ref{thm:main1} is achieved by an estimator that
does not require full functional data, but rather only a finite-dimensional
wavelet representation at resolutions chosen according to the sample size; see
Subsection~\ref{subsec:minimax_upper}. Specifically, if the target error is
$\varepsilon$, then the output data are only needed up to scale
$\big\lceil \frac{\log_2(1/\varepsilon)}{t'-s'}  \big\rceil$, while the input data
are used up to scale $\big\lceil
\frac{\log_2(1/\varepsilon)}{\min\{r_1,t\}-s}
\big\rceil$.
Section~\ref{sec:computation} gives a detailed analysis of the resulting
computational cost when the induced finite-dimensional regressions are solved by
dense direct least-squares methods. In particular, Proposition~\ref{prop:cost} shows that the estimator attains the minimax rate
with cost
\[
\Cost(\varepsilon)
\lesssim_{\log}
\varepsilon^{-\max\big\{
2
+
\frac{2d}{\min\{r_1,t\}-s}
+
\frac{(2(r_1-t)+d)_{+}}{t-s},
\ 
\frac{d+(2(-r_2-s')+d)_{+}}{t'-s'}
\big\}},
\]
where the two terms in the maximum correspond to input-side and output-side
computational contributions. Subsection~\ref{subsec:cost_optimality} explains
why these two contributions are unavoidable within the present blockwise dense-regression framework.
\end{remark}

\begin{remark}[Discussion on the convergence rate]
The minimax rate in Theorem~\ref{thm:main1} depends on the parameters
summarized in Table~\ref{tab:key_parameters}. The exponent is the minimum of
three quantities,
\[
\frac12,\qquad
\frac{t-s}{2(r_1-s)+d},\qquad
\frac{t'-s'}{\bigl(2(-r_2-s')+d\bigr)_+}.
\]
The first term is the coarse-scale parametric contribution. The second term is
an input-side contribution: it reflects the difficulty of learning operator
components that act on increasingly oscillatory input directions and produce
low-regularity output information. Since smoother input distributions excite
high-frequency input directions less strongly, increasing \(r_1\) slows down
this part of the rate. The third term is an output-side contribution: it
reflects the difficulty of recovering output components at increasingly low
regularity in the presence of rough observation noise. As \(r_2\) increases,
the noise becomes smoother and this output-side obstruction weakens.

This should be compared with the standard minimax rate in nonparametric
regression \cite{Tsybakov2009,Johnstone2019Gaussian}. In a typical model
\[
y_i=h(x_i)+\xi_i,\qquad \xi_i\iid \mathcal N(0,1),
\]
estimating a real-valued function $h$ on $[0,1]^d$ with Sobolev smoothness \(\beta\) under an $L^2$
loss gives a rate of order
\[
N^{-\frac{\beta}{2\beta+d}}.
\] In the present
problem, the unknown object is a linear operator rather than a function. The
rate therefore has a two-sided structure: one side is associated with how the
operator acts on input directions, and the other with the regularity of the
output components that must be recovered. This two-sided structure gives rise to
the multiscale geometry and computational considerations developed in the following
sections.
\end{remark}

\begin{remark}[Data and noise distributions]
For simplicity of exposition, we take the input and noise distributions to be
\(\mathcal{GP}(0,(I-\Delta)^{-r_1})\) and
\(\mathcal{GP}(0,(I-\Delta)^{-r_2})\), respectively. The analysis only uses the
corresponding multiscale covariance equivalences: in a suitable wavelet basis,
the input and noise covariance operators are diagonally preconditioned by the
Sobolev weights associated with \(r_1\) and \(r_2\); see Lemma \ref{lemma:property_UAW}. Thus the same result holds for more general Gaussian inputs and noises satisfying
analogous elliptic preconditioning bounds, for instance strongly elliptic
pseudo-differential covariance operators; see
\cite{harbrecht2024multilevel,chen2026convergence}.

In Theorem~\ref{thm:main1}, we assume only \(r_1-d/2>s\), which ensures that
\(\mcA u_i\) is well defined, and impose no regularity condition on
\(r_2\). The observation model \eqref{eq:continuous_model} is therefore
understood in the distributional sense: the noise may be a generalized
Gaussian field rather than a function-valued process. In wavelet coordinates,
this gives a heteroscedastic infinite-dimensional regression model with
scale-dependent input and noise variances.
\end{remark}

\section{Main ideas}\label{sec:main_idea}

This section discusses four key ideas underlying the construction and analysis of our estimator developed in Section \ref{sec:convergencerate}. First, we recast the operator learning problem as the problem of learning the wavelet matrix representation of the operator. Second, we propose a local bias-variance decision rule to determine which blocks of this matrix representation are estimated and which are discarded. Third, we show that the same estimation rate can be achieved by more aggressively discarding blocks outside a smaller estimation region. 
Fourth, we describe how to adapt the sample size across scales in order to balance local estimation errors. The third and fourth ideas reduce the computational cost without degrading the estimation error. We describe these ideas in turn in the following subsections. 

\subsection{Wavelet representation and infinite-dimensional regression}\label{ssec:waveletrepresentation}

Here, we recast the operator learning problem as an infinite-dimensional matrix regression problem in wavelet coordinates. This coordinate representation is the starting point for the multiscale bias--variance and computational analysis below.

Fix an orthonormal wavelet basis
\(\{\varphi_{j,k}:j\ge0,\ k\in\nabla_j\}\) of \(L^2(\mathbb T^d)\), where $|\nabla_j|:=\mathrm{Card}(\nabla_j) \asymp 2^{jd}.$
The index \(\lambda=(j,k)\) records the scale \(j\) and location \(k\). We define the wavelet index set 
\[ 
\Lambda:=\{\lambda=(j,k): j\ge0,\ k\in\nabla_j\}. 
\] 
For \(\lambda=(j,k)\), we set \(|\lambda|:=j\) and write \(\varphi_\lambda:=\varphi_{j,k}\).

For a function or distribution $h$, write its wavelet coefficients as
\[
h_\lambda:=\langle h,\varphi_{\lambda}\rangle, \qquad \bfh:=(h_\lambda)_{\lambda\in\Lambda},
\]
where the pairing is understood in the distributional sense when needed. We assume that the wavelets have sufficiently high regularity and sufficiently many vanishing moments so that, for every Sobolev index appearing below, \[ \|h\|_{H^a}^2 \asymp \sum_{\lambda\in\Lambda} 2^{2a|\lambda|} |\langle h,\varphi_\lambda\rangle|^2. 
\] 
For $a\in\R$, define the diagonal operator $\bfD^a$ on $\ell^2(\Lambda)$ by \begin{equation}\label{eq:D} (\bfD^a)_{\lambda,\lambda}=2^{a|\lambda|}, \qquad \lambda\in\Lambda. \end{equation} Then \[ 
\|h\|_{H^a}\asymp \|\bfD^a\bfh\|_{\ell^2}. 
\]

For a linear operator $\mcA$, define its wavelet matrix representation $\mathbf A=(A_{\lambda,\lambda'})_{\lambda,\lambda'\in\Lambda}$ by
\[
\bfA_{\lambda,\lambda'}
:=
\langle \mathcal A \varphi_{\lambda'},\varphi_{\lambda}\rangle.
\]
With this convention, the first index corresponds to the output coordinate and the second to the input coordinate. 
 Formally, $\mcA$ admits the wavelet expansion
\begin{align*}
\mcA=\sum_{\lambda,\lambda'\in \Lambda} \bfA_{\lambda,\lambda'}\, \varphi_{\lambda}\otimes \varphi_{\lambda'}=\sum_{\lambda,\lambda'\in \Lambda} \langle \mcA \varphi_{\lambda'},\varphi_{\lambda} \rangle \varphi_{\lambda}\otimes \varphi_{\lambda'}.
\end{align*} 
By the norm equivalence, for any $\mcA\in\mcO(s,s',R)$ and $t\ge s$, $t'\ge s'$, \begin{equation}\label{eq:matrix_norm} \|\mcA\|_{H^t\to H^{-t'}} \asymp \|\bfD^{-t'}\bfA\bfD^{-t}\|. \end{equation}

In wavelet coordinates, the observation model $f_i=\mcA u_i+w_i$ becomes
\[
\bff_i=\bfA \bfu_i+\bfw_i,\qquad 1\le i\le N.
\] 
Stacking these coefficient vectors row-wise gives \begin{equation}\label{eq:discrete_model} \bfF=\bfU\bfA^\top+\bfW, \end{equation} where \[ \bfF:=[\bff_1,\ldots,\bff_N]^\top\in\R^{N\times\Lambda}, \qquad \bfU:=[\bfu_1,\ldots,\bfu_N]^\top\in\R^{N\times\Lambda}, \qquad \bfW:=[\bfw_1,\ldots,\bfw_N]^\top\in\R^{N\times\Lambda}. \]
Thus, given $\bfU$ and $\bfF$, the operator learning problem reduces to estimating the infinite-dimensional matrix $\bfA$ under the norm $\|\bfD^{-t'}  \cdot  \bfD^{-t}\|$ in \eqref{eq:matrix_norm}.

The following lemma summarizes the structural properties of the wavelet-coordinate model used throughout the analysis.

\begin{lemma}[Properties of $\bfA,\bfU,\bfW$]\label{lemma:property_UAW}   For $\mcA\in\mcO(s,s',R)$, $u_i\iid \mathcal{GP}(0,(I-\Delta)^{-r_1})$, and $w_i\iid \mathcal{GP}(0,(I-\Delta)^{-r_2})$, the corresponding wavelet coefficient matrices $\bfA,\bfU,\bfW$ satisfy the following properties.
    \begin{enumerate}[label=(\Roman*)]
        \item \label{boundedness} \textbf{Boundedness of $\bfA$:} There exists a constant $R'>0$ such that $\| \bfD^{-s'}\bfA \bfD^{-s}\|\le R'$.
        \item \label{diagonal_preconditioning} \textbf{Diagonal preconditioning:}
There exist constants $0<c_{-}<c_{+}< \infty$ such that: 
 \begin{enumerate}[label=(\roman*)]
\item \label{preconditioning_data} 
The population covariance matrix of the input data $\bfC_u:=\E \left[\bfU^{\top}\bfU/N\right]$ satisfies
\[
c_{-}\le \sigma_{\min}(\bfD^{r_1} \bfC_u \bfD^{r_1})\le \sigma_{\max}(\bfD^{r_1} \bfC_u \bfD^{r_1})\le c_{+}.
\]
\item \label{preconditioning_noise} The population covariance matrix of the noise $\bfC_w:=\E \left[\bfW^{\top}\bfW/N\right]$ satisfies 
\[
c_{-}\le \sigma_{\min}(\bfD^{r_2} \bfC_w \bfD^{r_2})\le \sigma_{\max}(\bfD^{r_2} \bfC_w \bfD^{r_2})\le c_{+}.
\]    
\end{enumerate}
\end{enumerate}
\end{lemma}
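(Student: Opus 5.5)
Both parts follow from the wavelet characterization of Sobolev norms, $\|h\|_{H^a}\asymp\|\bfD^a\bfh\|_{\ell^2}$, assumed for all indices in play. The key fact is that $\|\cdot\|_{H^a}$ and $\|\bfD^a\,\cdot\,\|_{\ell^2}$ are equivalent norms on the same space. Hence the synthesis map $\bfh\mapsto\sum_\lambda h_\lambda\varphi_\lambda$ is an isomorphism from $\bfD^{-a}\ell^2$ onto $H^a$, for each relevant $a$.

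For \ref{boundedness}, fix $x,y\in\ell^2(\Lambda)$ with finite support, and set $w:=\sum_\lambda (\bfD^{-s}x)_\lambda\varphi_\lambda$ and $v:=\sum_\lambda(\bfD^{-s'}y)_\lambda\varphi_\lambda$. By the norm equivalence, $\|w\|_{H^s}\lesssim\|x\|_2$ and $\|v\|_{H^{s'}}\lesssim\|y\|_2$. By bilinearity,
\[
\langle \bfD^{-s'}\bfA\bfD^{-s}x,y\rangle=\sum_{\lambda,\lambda'}(\bfD^{-s'}y)_\lambda\,\langle\mcA\varphi_{\lambda'},\varphi_\lambda\rangle\,(\bfD^{-s}x)_{\lambda'}=\langle \mcA w,v\rangle.
\]
Its absolute value is at most $R\|w\|_{H^s}\|v\|_{H^{s'}}\lesssim R\|x\|_2\|y\|_2$. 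Density of finitely supported vectors then gives $\|\bfD^{-s'}\bfA\bfD^{-s}\|\le R'$ with $R'=CR$. This is also the argument behind \eqref{eq:matrix_norm}.

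For \ref{diagonal_preconditioning}, the computation is the same for inputs and noise, so consider $u\sim\mathcal{GP}(0,(I-\Delta)^{-r})$. Since the rows of $\bfU$ are i.i.d., $\bfC_u=\E[\bfu\bfu^\top]$ with entries $(\bfC_u)_{\lambda,\lambda'}=\langle (I-\Delta)^{-r}\varphi_{\lambda'},\varphi_\lambda\rangle$. For finitely supported $x$, set $g:=\sum_\lambda(\bfD^{r}x)_\lambda\varphi_\lambda$. Then
\[
x^\top\bfD^{r}\bfC_u\bfD^{r}x=\E\langle u,g\rangle^2=\langle(I-\Delta)^{-r}g,g\rangle=\|g\|_{H^{-r}}^2 .
\]
The last equality uses the Fourier definition of the Sobolev norm, $\|g\|_{H^{-r}}^2=\sum_\xi(1+4\pi^2|\xi|^2)^{-r}|\hat g(\xi)|^2$. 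The wavelet norm equivalence at index $a=-r$ gives $\|g\|_{H^{-r}}^2\asymp\|\bfD^{-r}\bfD^{r}x\|_2^2=\|x\|_2^2$. This yields the bounds $c_-\|x\|_2^2\le x^\top\bfD^r\bfC_u\bfD^r x\le c_+\|x\|_2^2$, with $c_\pm$ the equivalence constants. Extending by density gives the eigenvalue bounds for $\bfD^{r}\bfC_u\bfD^{r}$ as a bounded self-adjoint operator on $\ell^2$. Apply this with $r=r_1$ for (i) and $r=r_2$ for (ii). The constants for inputs and noise can be merged by taking the worse of the two.

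The main obstacle is rigor in the distributional setting, especially for the noise when $r_2$ is small, since $w$ is then only a generalized field. I would handle this by working with the Gaussian linear functional $g\mapsto\langle u,g\rangle$ on smooth $g$. One then checks that for $g=\varphi_\lambda$ (smooth by the wavelet regularity assumption) the coefficients $u_\lambda$ are well defined with the stated covariance. This requires the wavelets to be regular enough that $\varphi_\lambda\in H^{a}$ for every index $a$ used, which is the standing assumption. Once that holds, the quadratic-form identity above is exact, and the rest is the norm equivalence.
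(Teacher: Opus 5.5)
Your proof is correct and follows the paper's route. Part (I) is exactly the norm-equivalence argument behind \eqref{eq:matrix_norm}. For part (II) the paper only cites the standard wavelet diagonal-preconditioning results, and your identity $x^\top\bfD^{r}\bfC_u\bfD^{r}x=\|g\|_{H^{-r}}^2$, combined with the wavelet norm equivalence at index $-r$, reproduces that cited fact directly for the covariance $(I-\Delta)^{-r}$; the cited results additionally cover general elliptic covariances, where this equality becomes only an equivalence $\asymp$.
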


\begin{proof}
The bound in \ref{boundedness} follows from the assumed
operator bound on $\mcA$ and the norm equivalence \eqref{eq:matrix_norm}. The
preconditioning bounds in \ref{diagonal_preconditioning} are standard
consequences of wavelet diagonal preconditioning for Sobolev-scale
elliptic covariance operators, see
\cite[Proposition~3.2]{chen2026convergence} and
\cite[Propositions~3 and~13]{harbrecht2024multilevel}.
\end{proof}

The operator class $\mcO(s,s',R)$ is represented, up to constants in the wavelet norm equivalence, by the weighted matrix class \[ \mcM(s,s',R') := \left\{ \bfA\in\mathbb R^{\Lambda\times\Lambda}: \|\bfD^{-s'}\bfA\bfD^{-s}\|\le R' \right\}. \] Therefore, under the matrix model \eqref{eq:discrete_model}, we study the minimax risk \[ \inf_{\widehat{\bfA}} \sup_{\bfA\in\mcM(s,s',R')} \E\,\|\bfD^{-t'}(\widehat{\bfA}-\bfA)\bfD^{-t}\|,
\]
where the infimum is over all measurable estimators based on $\bfU$ and $\bfF$.

We summarize the first idea for learning linear operators as follows:
\begin{center}
\fbox{\parbox{0.95\linewidth}{
\centering
\textbf{Idea I.} 
Wavelet coordinates reduce learning linear operators between Sobolev spaces to infinite-dimensional matrix estimation under diagonally weighted operator norms.
}}
\end{center}

\subsection{Bias--variance geometry}

Recall the infinite-dimensional matrix model
\begin{align}\label{eq:discrete_model2}
    \bfF=\bfU\bfA^{\top}+\bfW,
\end{align}
where the unknown matrix \(\bfA\) belongs to \(\mathcal{M}(s,s',R')\). Given \(\bfU\)
and \(\bfF\), our goal is to estimate \(\bfA\) under the weighted operator
norm \(\|\bfD^{-t'}\cdot \bfD^{-t}\|\). In what follows, it is more
convenient to work with \(\bfA^\top\), equipped with the equivalent
transposed loss \(\|\bfD^{-t} \cdot \bfD^{-t'}\|\).

The rows and columns of \(\bfA^\top\) are indexed by the multiscale wavelet
index set \(\Lambda=\{\lambda=(j,k):j\ge0,\ k\in\nabla_j\}\). It is
therefore useful to view \(\bfA^\top\) blockwise. We denote by
\[
    (\bfA^\top)_{j,j'}\in \mathbb R^{|\nabla_j|\times |\nabla_{j'}|}
\]
the \((j,j')\)-th block of \(\bfA^\top\), where
\(|\nabla_j|\asymp 2^{jd}\) and \(|\nabla_{j'}|\asymp 2^{j'd}\).

From the perspective of statistical estimation, it is natural to begin with a
blockwise bias--variance analysis. We describe this heuristic for estimating
\(\bfA^\top\) in the model \eqref{eq:discrete_model2}; rigorous blockwise bias
and variance estimates are given in Appendices~\ref{app:bias}
and~\ref{app:var}, respectively. 
Since the target loss for
\(\bfA^\top\) is \(\|\bfD^{-t} \cdot \bfD^{-t'}\|\), discarding the
\((j,j')\)-th block contributes, after Sobolev weighting, an error of order
\[
    2^{-jt-j't'}\|(\bfA^\top)_{j,j'}\|.
\]
The assumption
\(\|\bfD^{-s'}\bfA\bfD^{-s}\|\le R'\) implies, at the block level,
\[
\|(\bfA^{\top})_{j,j'}\|\lesssim R' 2^{js+j's'}.
\]
Hence the bias caused by discarding the \((j,j')\)-th block is of order
\[
\mathsf{Bias}_{j,j'}\asymp 2^{-j(t-s)-j'(t'-s')}.
\]

We next estimate the corresponding variance scale. The input covariance has
regularity \(r_1\), so the typical size of the design in the \(j\)-th input
block is \(2^{-jr_1}\). The noise covariance has regularity \(r_2\), so the
typical size of the noise in the \(j'\)-th output block is \(2^{-j'r_2}\).
Inverting the design therefore contributes a factor \(2^{jr_1}\), while the
noise contributes a factor \(2^{-j'r_2}\). After converting the resulting block error to the $\|\bfD^{-t} \cdot \bfD^{-t'}\|$ loss
and using Chevet's inequality to control the operator norm of a random matrix
of size \(2^{jd}\times 2^{j'd}\), the square root of the variance contribution
to the blockwise operator-norm error is heuristically
\[
\sqrt{\mathsf{Var}_{j,j'}(N)}
\asymp
\frac{
2^{-jt-j't'+jr_1-j'r_2+\frac d2\max\{j,j'\}}
}{\sqrt N}.
\]

The factor \(2^{\frac d2\max\{j,j'\}}\) is the operator-norm fluctuation
scale of the corresponding random block. Indeed, a Gaussian random matrix of
size \(2^{jd}\times 2^{j'd}\) with independent \(\mathcal{N}(0,1)\) entries has
operator norm of order
\[
2^{jd/2}+2^{j'd/2}
\asymp
2^{\frac d2\max\{j,j'\}}.
\]

Therefore, the local bias--variance balance for deciding whether the
\((j,j')\)-th block should be estimated is
\begin{align}\label{eq:bias-var-tradeoff}
2^{-j(t-s)-j'(t'-s')} \asymp \frac{
2^{-jt-j't'+jr_1-j'r_2+\frac d2\max\{j,j'\}}
}{\sqrt N} .
\end{align}
Equivalently,
\begin{align}\label{eq:line-eq}
j(r_1-s)+j'(-r_2-s')+\frac d2\max\{j,j'\}
=
\frac12\log_2 N .
\end{align}
Notice that this boundary is independent of the loss exponents \(t,t'\).

The boundary \eqref{eq:line-eq} consists of two linear pieces. In the region
\(j\ge j'\), it is given by
\[
j\Bigl(r_1-s+\frac d2\Bigr)+j'(-r_2-s')
=
\frac12\log_2 N,
\]
whereas in the region \(j'\ge j\), it is given by
\[
j(r_1-s)+j'\Bigl(-r_2-s'+\frac d2\Bigr)
=
\frac12\log_2 N.
\]
Thus the boundary passes through the points
{\scriptsize
\[
\left(\frac{\log_2 N}{2(r_1-s)+d},\ 0\right),
\quad
\left(0,\ \frac{\log_2 N}{2(-r_2-s')+d}\right),\quad
\left(
\frac{\log_2 N}{2(r_1-r_2-s-s')+d},\
\frac{\log_2 N}{2(r_1-r_2-s-s')+d}
\right),
\]
}
as illustrated in Figure~\ref{fig:figure1}.

 Recall that the assumption \(u\in H^{(r_1-d/2)_{-}}\hookrightarrow H^s\), which ensures
that \(\mcA u\) is well defined, is equivalent to \(r_1>s+d/2\). In
particular, the coefficient of \(j\) in \eqref{eq:line-eq} is positive. For
simplicity, let us focus on the case \(-r_2-s'>0\). Then the blocks selected by the local bias--variance tradeoff rule
\eqref{eq:bias-var-tradeoff} form the region
\[
j(r_1-s)+j'(-r_2-s')+\frac d2\max\{j,j'\}
\le
\frac12\log_2 N
\]
in the \((j,j')\)-plane.  Since the \((j,j')\)-block has size \(2^{jd}\times 2^{j'd}\), the piecewise
linear cutoff in the scale indices \((j,j')\) corresponds, when viewed in the
full wavelet-index coordinates \(\lambda=(j,k)\) and \(\lambda'=(j',k')\), to
a piecewise hyperbolic cutoff boundary.

\vspace{0.3em}
The resulting convergence rate can be read off from the extremal points of
this selected region. These are
{\scriptsize
\[
(0,0),\quad
\left(\frac{\log_2 N}{2(r_1-s)+d},0\right),
\quad
\left(0,\frac{\log_2 N}{2(-r_2-s')+d}\right), \quad
\left(
\frac{\log_2 N}{2(r_1-r_2-s-s')+d},
\frac{\log_2 N}{2(r_1-r_2-s-s')+d}
\right).
\]
}
\hspace{-0.65em} Evaluating the discarded-block bias $2^{-j(t-s)-j'(t'-s')}$ at these four points yields the four dominant error scales
\[
N^{-1/2},\qquad
N^{-\frac{t-s}{2(r_1-s)+d}},\qquad
N^{-\frac{t'-s'}{2(-r_2-s')+d}},
\qquad
N^{-\frac{t+t'-s-s'}{2(r_1-r_2-s-s')+d}}.
\]
These correspond respectively to the low-frequency block, the input-side
high-frequency boundary, the output-side high-frequency boundary, and the
diagonal transition point \(j=j'\). Therefore the final convergence rate is
given by the slowest of these four rates, namely
\begin{equation}\label{eq:unsimplifiedrate}
   N^{-\min\big\{
\frac{1}{2},\,
\frac{t-s}{2(r_1-s)+d},\,
\frac{t'-s'}{(2(-r_2-s')+d)_{+}},\,
\frac{t+t'-s-s'}{(2(r_1-r_2-s-s')+d)_{+}}
\big\}}. 
\end{equation}
Notice that the diagonal transition point is never the bottleneck. Indeed,
set
\[
a=t-s>0,\qquad b=t'-s'>0,\qquad c=r_1-s>0,\qquad e=-r_2-s'.
\]
Then
\[
\frac{a+b}{(2(c+e)+d)_+}
\ge
\min\left\{
\frac{a}{2c+d},
\frac{b}{(2e+d)_+}
\right\}.
\]
Consequently,
\[
\frac{t+t'-s-s'}{(2(r_1-r_2-s-s')+d)_+}
\ge
\min\left\{
\frac{t-s}{2(r_1-s)+d},
\frac{t'-s'}{(2(-r_2-s')+d)_+}
\right\}.
\]
Thus the diagonal transition point does not affect the final minimum, and the
convergence rate \eqref{eq:unsimplifiedrate} simplifies to
\[
N^{-\min\big\{
\frac{1}{2},\,
\frac{t-s}{2(r_1-s)+d},\,
\frac{t'-s'}{(2(-r_2-s')+d)_+}
\big\}}.
\]

Our first main result, Theorem~\ref{thm:main1}, shows that this heuristic
bias--variance calculation indeed gives the minimax rate, up to logarithmic
factors. A natural estimator attaining this rate is obtained by performing
linear regressions over the blocks selected by the local bias--variance
tradeoff:
\begin{align}\label{eq:bv_tradeoff_line}
j(r_1-s)+j'(-r_2-s')+\frac d2\max\{j,j'\}
\le
\frac12\log_2 N.
\end{align}
We henceforth refer to the set of blocks satisfying
\eqref{eq:bv_tradeoff_line} as the bias--variance region
\(\mathcal{BV}_N\). This is the quadrilateral region in the
\((j,j')\)-plane illustrated in Figure~\ref{fig:figure1}.

 We summarize the second key idea for learning linear operators as follows:
\begin{center}
\fbox{\parbox{0.9\linewidth}{
\centering
\textbf{Idea II.} 
Minimax optimal estimation can be achieved by discarding blocks outside a region determined by a local bias--variance decision rule.
}}
\end{center}

\begin{figure}[htbp]
\centering
% \documentclass[tikz,border=6pt]{standalone}
% \usepackage{amsmath,amssymb}
% \usepackage{pgfplots}
% \usetikzlibrary{arrows.meta}
% \pgfplotsset{compat=1.18}

% \begin{document}

\begin{tikzpicture}
\begin{axis}[
    width=10cm,
    height=10cm,
    xmin=-0.05, xmax=1.2,
    ymin=-0.05, ymax=1.1,
    y dir=reverse,
    axis lines=none,
    xtick=\empty,
    ytick=\empty,
    clip=false,
    samples=300,
]

% key constants
\pgfmathsetmacro{\a}{0.58}
\pgfmathsetmacro{\b}{0.47}
\pgfmathsetmacro{\c}{0.7}
\pgfmathsetmacro{\d}{0.48}

% "axes" j=0 and j'=0 with arrows
\draw[thick,-{Latex[length=3mm,width=2mm]}] (axis cs:0,0) -- (axis cs:1.05,0);
\draw[thick,-{Latex[length=3mm,width=2mm]}] (axis cs:0,0) -- (axis cs:0,0.9);

% axis labels
\node[above right] at (axis cs:1.06,0.02) {$j'$};
\node[below left]  at (axis cs:-0.01,0.89) {$j$};

% origin
\node[above left] at (axis cs:0.01,0.01) {\small{$(0,0)$}};

% two piecewise linear boundaries
\addplot[thick] coordinates {(0,\a) (\b,0)};
\addplot[thick] coordinates {(0,\a) (\d+0.075,\d)};
\addplot[thick] coordinates {(\d+0.075,\d) (\c,0)};

% diagonal dashed line j = j'
% \draw[thick,dashed] (axis cs:0,0) -- (axis cs:0.7,0.7);

% % shade the triangle
% \fill[gray!20]
%     (axis cs:0,0) -- (axis cs:0,\a) -- (axis cs:\b,0) -- cycle;

% % enlarged quadrilateral
% \fill[gray!8]
%     (axis cs:0,0) -- (axis cs:0,\a) -- (axis cs:\d+0.03,\d) -- (axis cs:\c,0) -- cycle;

% minimal triangle
\fill[pattern=north east lines, pattern color=gray!100]
    (axis cs:0,0) -- (axis cs:0,\a) -- (axis cs:\b,0) -- cycle;

\node[left] at (axis cs:\b+0.17,-0.055)
{\small{$\big(0,\frac{ (t-s)\log_2 N}{(t'-s')(2(r_1-s)+d)}\big)$}};

\node[left] at (axis cs:0,\a)
{\small{$\big(\frac{\log_2 N}{2(r_1-s)+d},0\big)$}};

\node[above] at (axis cs:\c+0.12,0)
{\small{$\big(0,\frac{\log_2 N}{2(-r_2-s')+d}\big)$}};

\node at (axis cs:0.75,0.56) {\small{$\big(\frac{\log_2 N}{2(r_1-r_2-s-s')+d},\frac{\log_2 N}{2(r_1-r_2-s-s')+d}\big)$}};

% region labels
\node at (axis cs:0.18,0.19) {\small{$\mathcal{E}_N$}};
\node at (axis cs:0.39,0.35) {\small{$\mathcal{BV}_N \backslash \,\mathcal{E}_N$}};

\end{axis}
\end{tikzpicture}

% \end{document}

% % region labels
% \node at (axis cs:0.18,0.15) {\small{$\mathcal{E}_N$}};
% \node at (axis cs:0.15,0.39) {\small{$\mathcal{R}_N \backslash \,\mathcal{E}_N$}};
\caption{The bias--variance region \(\mathcal{BV}_N\) (quadrilateral) and the
minimal estimation region \(\mcE_N\) (shaded triangle) in the $(j,j')$-plane, when
\(-r_2-s'>0\) and \eqref{eq:rate_dominate} holds. }
\label{fig:figure1}
\end{figure}

\subsection{Minimal estimation region}\label{subsec:minimal_estimation_region}
From a computational perspective, one may ask whether it is necessary to estimate every block in the bias-variance region \(\mathcal{BV}_N\). 
It turns out that the same minimax rate can be achieved using a smaller, in fact minimal, estimation region, thereby reducing the computational cost. To illustrate the idea, suppose for simplicity that the minimax rate is
dominated by the $j$-side boundary, namely
\begin{align}\label{eq:rate_dominate}
N^{-\min\big\{
\frac{1}{2},\,
\frac{t-s}{2(r_1-s)+d},\,
\frac{t'-s'}{(2(-r_2-s')+d)_+}
\big\}}
=
N^{-\frac{t-s}{2(r_1-s)+d}} .
\end{align}
Equivalently, the rate is determined by the block near
\[
\left(\frac{\log_2 N}{2(r_1-s)+d},0\right).
\]

In this case, it is not necessary to estimate all blocks satisfying
\eqref{eq:bv_tradeoff_line}. To preserve the rate
\(N^{-\frac{t-s}{2(r_1-s)+d}},\) 
we can discard additional blocks if the bias incurred by doing so is below this target accuracy. 
Thus we may discard all
blocks satisfying
\[
2^{-j(t-s)-j'(t'-s')}
\le
N^{-\frac{t-s}{2(r_1-s)+d}}.
\]
Equivalently, it suffices to estimate only those blocks satisfying
\begin{align}\label{eq:estimation_line}
j(t-s)+j'(t'-s')
\le
\frac{t-s}{2(r_1-s)+d}\log_2 N .
\end{align}
The corresponding estimation region is the shaded triangle in
Figure~\ref{fig:figure1}, with vertices
\[
(0,0),\qquad
\left(\frac{\log_2 N}{2(r_1-s)+d},0\right),
\qquad
\left(
0,
\frac{(t-s)\log_2 N}{(t'-s')(2(r_1-s)+d)}
\right).
\] By the assumption
\eqref{eq:rate_dominate}, we have
\[
\frac{(t-s)\log_2 N}{(t'-s')(2(r_1-s)+d)}
<
\frac{\log_2 N}{2(-r_2-s')+d}.
\]
Therefore, the region \eqref{eq:estimation_line} is strictly contained in the
bias--variance region \eqref{eq:bv_tradeoff_line}. Hence estimating every
block selected by the local bias--variance rule is not computationally
 optimal. To achieve the minimax rate
\(N^{-\frac{t-s}{2(r_1-s)+d}}\), it is enough to estimate the smaller region
\eqref{eq:estimation_line}; this region is the minimal possible estimation
region at the level of this blockwise bias criterion. This shows that the
local bias--variance rule, while statistically natural, does not by itself
lead to a computationally optimal estimator.

We summarize the third key idea for learning linear operators as follows:
\begin{center}
\fbox{\parbox{0.9\linewidth}{
\centering
\textbf{Idea III.} The 
local bias--variance rule does not yield the minimal minimax-optimal region, and hence does not lead to computationally optimal estimation.
}}
\end{center}

\subsection{Scale-adaptive sample usage}\label{subsec:sample_usage}

We have now identified the minimal estimation region. Thus, to achieve the
minimax rate in \eqref{eq:rate_dominate}
\[
N^{-\frac{t-s}{2(r_1-s)+d}}=:N^{-\gamma},
\]
it suffices to estimate all blocks whose scale indices satisfy
\eqref{eq:estimation_line}.

Starting from the matrix model
\[
\bfF=\bfU\bfA^{\top}+\bfW,
\]
we observe that it can be written block-columnwise. For any \(j'\ge 0\), the
\(j'\)-th column block satisfies
\[
\bfF_{j'}=\bfU(\bfA^{\top})_{\cdot,j'}+\bfW_{j'}.
\]
Here
\[
\bfF_{j'}\in\mathbb R^{N\times |\nabla_{j'}|},
\qquad
(\bfA^{\top})_{\cdot,j'}\in \mathbb R^{\Lambda\times |\nabla_{j'}|},
\qquad
\bfW_{j'}\in\mathbb R^{N\times |\nabla_{j'}|}
\]
denote the \(j'\)-th column blocks of \(\bfF\), \(\bfA^\top\), and \(\bfW\),
respectively.

A first natural estimator is to use all \(N\) samples and perform least
squares regression for all blocks in the minimal estimation region
\eqref{eq:estimation_line}. However, this is still not computationally
optimal. The reason is that the local statistical difficulty is not uniform
throughout the region.

Indeed, under the simplifying assumption \eqref{eq:rate_dominate}, the
minimax rate is determined by the input-side boundary point
\[
\left(\frac{\log_2 N}{2(r_1-s)+d},0\right),
\]
and equals \(N^{-\gamma}=N^{-\frac{t-s}{2(r_1-s)+d}}\). Now consider instead
the output-side endpoint of the minimal estimation region,
\[
\left(
0,
\frac{(t-s)\log_2 N}{(t'-s')(2(r_1-s)+d)}
\right).
\]
This point lies on the boundary of \eqref{eq:estimation_line}, so its
discarded-block bias is exactly of the target order \(N^{-\gamma}\). However,
by \eqref{eq:rate_dominate}, this point lies strictly inside the larger
bias--variance region \eqref{eq:bv_tradeoff_line}. Therefore, at this block,
the square root of the variance using all \(N\) samples is strictly smaller than its bias:
\[
\sqrt{\mathsf{Var}_{0,\frac{(t-s)\log_2 N}{(t'-s')(2(r_1-s)+d)}} (N)}
\ll
\mathsf{Bias}_{0,\frac{(t-s)\log_2 N}{(t'-s')(2(r_1-s)+d)}} .
\]
Since the block lies on the boundary \eqref{eq:estimation_line}, the bias is
of order \(N^{-\gamma}\). Hence
\[
\sqrt{\mathsf{Var}_{0,\frac{(t-s)\log_2 N}{(t'-s')(2(r_1-s)+d)}} (N)}
\ll
\mathsf{Bias}_{0,\frac{(t-s)\log_2 N}{(t'-s')(2(r_1-s)+d)}}
\asymp
N^{-\gamma}.
\]
Thus, if all \(N\) samples are used, the estimation error for this block is
much smaller than the global target rate. In other words, the block near
\[
\left(\frac{\log_2 N}{2(r_1-s)+d},0\right)
\]
is statistically the hardest one, which contributes to the minimax rate $N^{-\frac{t-s}{2(r_1-s)+d}}$, whereas the block near
\[
\left(
0,
\frac{(t-s)\log_2 N}{(t'-s')(2(r_1-s)+d)}
\right)
\]
is easier to estimate.

This observation creates room for computational improvement. Since the square root of the
variance at this easier block is already much smaller than \(N^{-\gamma}\),
we do not need to use all \(N\) samples to estimate it. Instead, for each
block \((j,j')\), one may in principle use only \(N_{j,j'}\le N\) samples,
chosen so that the variance of the \((j,j')\)-block is raised to the global
target level \(N^{-\gamma}\). In other words, the guiding principle is
\[
\sqrt{\mathsf{Var}_{j,j'}(N_{j,j'})}\asymp N^{-\gamma},
\]
whenever the block \((j,j')\) belongs to the minimal estimation region \eqref{eq:estimation_line}.
Thus the number of samples used for each block should depend on its local
statistical difficulty, rather than being fixed uniformly at the full sample
size \(N\) throughout the entire region.

In the actual estimator constructed in Subsection \ref{subsec:minimax_upper}, we use an even simpler
column-adaptive allocation. Namely, for each column scale \(j'\), we choose a
sample size \(N_{j'}\) depending only on \(j'\), and use these \(N_{j'}\)
samples to estimate all relevant blocks in the \(j'\)-th column. This
columnwise allocation already achieves the optimal computational cost; a
fully blockwise allocation \(N_{j,j'}\) would not improve the leading-order
complexity.

We finally summarize the fourth key idea for learning linear operators as follows:
\begin{center}
\fbox{\parbox{0.9\linewidth}{
\centering
\textbf{Idea IV.} Region-adaptive sample sizes balance local estimation errors,
preserve the global minimax rate, and achieve optimal computational complexity.
}}
\end{center}

\section{Minimax rate}\label{sec:convergencerate}

\subsection{Minimax upper bound}\label{subsec:minimax_upper}

We first set up some matrix notation. Recall that
\[
\Lambda=\{\lambda=(j,k):j\ge0,\ k\in\nabla_j\},
\qquad
|\nabla_j|\asymp 2^{jd},
\]
where \(\nabla_j\) is the set of location indices at scale \(j\). For a set of
scales \(I\subseteq\mathbb{N}_0\), define
\[
\Lambda_I:=\{(j,k):j\in I,\ k\in\nabla_j\}.
\]
In particular, we write
\[
\Lambda_j:=\Lambda_{\{j\}}=\{j\}\times\nabla_j.
\]
For a matrix \(\bfB\) indexed by \(\Lambda\times\Lambda\), we use the
scale-level notation
\[
\bfB_{I,K}:=\bfB_{\Lambda_I,\Lambda_K}.
\]
In particular,
\[
\bfB_{I,j}:=\bfB_{\Lambda_I,\Lambda_j},
\qquad
\bfB_{j,I}:=\bfB_{\Lambda_j,\Lambda_I}.
\]
We use ``\(\cdot\)'' to indicate that no restriction is imposed in that index;
for instance,
\[
\bfB_{\cdot,j}:=\bfB_{\Lambda,\Lambda_j},
\qquad
\bfB_{I,\cdot}:=\bfB_{\Lambda_I,\Lambda}.
\]
For the data and noise matrices, whose rows are indexed by samples and whose
columns are indexed by wavelets, we write
\[
\bfU_I:=\bfU_{\cdot,\Lambda_I},
\qquad
\bfF_{j'}:=\bfF_{\cdot,\Lambda_{j'}},
\qquad
\bfW_{j'}:=\bfW_{\cdot,\Lambda_{j'}}.
\]
Thus \(\bfU_I\in\mathbb R^{N\times |\Lambda_I|}\), while
\(\bfF_{j'},\bfW_{j'}\in\mathbb R^{N\times |\nabla_{j'}|}\).

We define the exponent
\begin{align}\label{eq:gamma}
\gamma:=\min\left\{\frac{1}{2},\,
\frac{t-s}{2(r_1-s)+d},\,
\frac{t'-s'}{\bigl(2(-r_2-s')+d\bigr)_{+}}\right\}.
\end{align}
As stated in Theorem~\ref{thm:main1} and proved below, \(N^{-\gamma}\), up to logarithmic factors, is the minimax rate. The estimator below is constructed to attain this rate.

The discussion in Subsection~\ref{subsec:minimal_estimation_region} motivates a minimal estimation region: it suffices to estimate those coefficients whose omission may contribute bias larger than the target accuracy $N^{-\gamma}$. This region, formalized in the following definition, specifies the entries of $\bfA^\top$ that will be estimated.

\begin{definition}[Estimation region]\label{def:estimation_region}
For each sample size \(N\), the estimation region is defined as the finite subset of \(\Lambda \times \Lambda\)
\[
\mcE_N := \Big\{(\lambda,\lambda')\in \Lambda\times\Lambda:\,
j(t-s)+j'(t'-s')\le \gamma \log_2 N \Big\}.
\]
\end{definition}
With a slight abuse of notation, we also write \((j,j')\in \mcE_N\) to mean that
the corresponding scales satisfy $j(t-s)+j'(t'-s')\le \gamma\log_2 N.$

We next write \eqref{eq:discrete_model} block-columnwise. 
For any \(j'\ge 0\),
\begin{align}\label{eq:model_column}
\bfF_{j'}=\bfU(\bfA^{\top})_{\cdot,j'}+\bfW_{j'}.
\end{align}
Thus each column scale \(j'\) gives an \emph{infinite-dimensional multivariate regression} problem, with response \(\bfF_{j'}\), design \(\bfU\), coefficient matrix \((\bfA^\top)_{\cdot,j'}\), and noise \(\bfW_{j'}\). Our goal is to estimate the entries of \(\bfA^\top\) in \(\mcE_N\) by solving these regressions columnwise.

However, directly regressing only on the target region \(\mcE_N\) can induce a non-negligible \emph{omitted-variable bias}: the components outside \(\mcE_N\) may still interact with the retained components through the empirical design; see Remark \ref{remark:nested-regression} for a detailed explanation. To control this bias, we regress over a slightly enlarged region and then restrict the resulting estimator back to \(\mcE_N\). This leads to the following regression region:

\begin{definition}[Regression region]\label{def:regression_region}
For each sample size \(N\), the regression region is defined as the finite subset of \(\Lambda \times \Lambda\)
\[
\mcR_N := \Big\{(\lambda,\lambda')\in \Lambda\times\Lambda:\,
j(\min\{r_1,t\}-s)+j'(t'-s')\le \gamma \log_2 N \Big\}.
\]
\end{definition}

\begin{figure}[htbp]
\centering
% \documentclass[tikz,border=6pt]{standalone}
% \usepackage{amsmath,amssymb}
% \usepackage{pgfplots}
% \usetikzlibrary{arrows.meta}
% \pgfplotsset{compat=1.18}

% \begin{document}

\begin{tikzpicture}
\begin{axis}[
    width=10cm,
    height=10cm,
    xmin=-0.05, xmax=1.2,
    ymin=-0.05, ymax=1.1,
    y dir=reverse,
    axis lines=none,
    xtick=\empty,
    ytick=\empty,
    clip=false,
    samples=300,
]

% key constants
\pgfmathsetmacro{\a}{0.6}
\pgfmathsetmacro{\b}{0.37}
\pgfmathsetmacro{\c}{0.8}

% "axes" j=0 and j'=0 with arrows
\draw[thick,-{Latex[length=3mm,width=2mm]}] (axis cs:0,0) -- (axis cs:1.05,0);
\draw[thick,-{Latex[length=3mm,width=2mm]}] (axis cs:0,0) -- (axis cs:0,0.9);

% axis labels
\node[above right] at (axis cs:1.06,0.02) {$j'$};
\node[below left]  at (axis cs:-0.01,0.89) {$j$};

% origin
\node[above left] at (axis cs:0.01,0.01) {\small{$(0,0)$}};

% two piecewise linear boundaries
\addplot[thick] coordinates {(0,\b) (\c,0)};
\addplot[thick] coordinates {(0,\a) (\c,0)};

\fill[pattern=north east lines, pattern color=gray!100]
    (axis cs:0,0) -- (axis cs:0,\b) -- (axis cs:\c,0) -- cycle;

% labels of special points

\node[left] at (axis cs:0,\b)
{\small{$\big(\frac{\gamma \log_2 N}{t-s},\,0\big)$}};

\node[left] at (axis cs:0,\a)
{\small{$\big(\frac{\gamma \log_2 N}{\min\{r_1,t\}-s},\,0\big)$}};

\node[above] at (axis cs:\c,0)
{\small{$\big(0,\,\frac{\gamma \log_2 N}{t'-s'}\big)$}};

% region labels
\node at (axis cs:0.18,0.15) {\small{$\mathcal{E}_N$}};
\node at (axis cs:0.15,0.39) {\small{$\mathcal{R}_N \backslash \,\mathcal{E}_N$}};

\end{axis}
\end{tikzpicture}

% \end{document}
\caption{The estimation region \(\mcE_N\) (shaded triangle) and the regression region \(\mcR_N\) (triangle) in the \((j,j')\)-plane.}
\label{fig:figure2}
\end{figure}

Figure \ref{fig:figure2} illustrates the estimation region $\mcE_N$ and the regression region $\mcR_N$ in the $(j,j')$-plane.  When \(r_1 \ge t\), these two regions coincide; when \(r_1<t\), the regression region strictly contains the estimation region, \(\mcE_N\subsetneq \mcR_N\).

To define the estimator, we introduce the columnwise cutoffs associated with these two regions. For each relevant column scale \(j'\ge 0\), define
 \begin{align}\label{eq:j_point_estimation}
J(j'):=\left\lceil \frac{\gamma \log_2 N-j'(t'-s')}{t-s}\right\rceil,\qquad I_{j'}:=\left\{0,1,\ldots, J(j')\right\}, \ \ \ I^c_{j'}:= \mathbb{N}_0 \setminus I_{j'},
\end{align}
where \(J(j')\) is the largest \(j\)-level retained by \(\mcE_N\) at the fixed column scale \(j'\). Similarly,
\begin{align}\label{eq:j_point_regression}
\widetilde{J}(j'):=\left\lceil \frac{\gamma \log_2 N-j'(t'-s')}{\min\{r_1,t\}-s}\right\rceil,\qquad \widetilde{I}_{j'}:=\left\{0,1,\ldots, \widetilde{J}(j')\right\}, \  \ \ \widetilde{I}^c_{j'}:= \mathbb{N}_0 \setminus \widetilde{I}_{j'}
\end{align}
where $\widetilde{J}(j')$ is the largest \(j\)-level retained by \(\mcR_N\) at the fixed column scale \(j'\).

Our estimator is constructed in two steps. We first perform regressions over the enlarged regression region \(\mcR_N\), and then restrict the resulting estimate to the estimation region \(\mcE_N\). As discussed in Subsection~\ref{subsec:sample_usage}, the regression is carried out with column-adaptive sample sizes: for the \(j'\)-th column block, we use \(N_{j'}\) samples, where \(N_{j'}\) depends on \(j'\), rather than the full sample size \(N\). We will show that this column-adaptive choice achieves the minimax rate while reducing the computational cost to the optimal order.

The complete definitions of $\widehat{\bfA}$ and $\widehat{\mcA}$ are given below. 

\begin{tcolorbox}[scale =1,
  enhanced,
  colback=white,
  frame hidden, boxrule=0pt,
  borderline north={0.8pt}{0pt}{black},
  borderline north={0.8pt}{17pt}{black!60!white},
  borderline south={0.8pt}{0pt}{black!60!white},
  title=\textbf{Algorithm: Construction of the Estimator \(\widehat{\mcA}\)},
  label=alg:construction,
  colbacktitle=white,
  coltitle=black,
  titlerule=0pt,
  left=1mm,right=1mm,top=1mm,bottom=1mm
]

\noindent\textbf{Input:} Confidence level $\delta>0$, input data matrix \(\bfU_{\widetilde{I}_{0}}\), output data blocks \(\bfF_{j'}\) for $j'\le \big\lceil \frac{\gamma\log_2 N}{t'-s'} \big\rceil$.

\noindent\textbf{Output:} Estimator \(\widehat{\mcA}\).

\vspace{0.3em}
\noindent\textbf{Step 1:} Columnwise regression.
\vspace{0.3em}

For each $j'=0,1,\dots,\big\lceil \frac{\gamma\log_2 N}{t'-s'} \big\rceil$, choose \(N_{j'}\) satisfying
\begin{align}\label{eq:Njprime_box}
N_{j'}
\asymp
\max\left\{2^{\widetilde J(j')d}+\log((\log N)/\delta), \
N^{2\gamma}
\sup_{0\le j\le J(j')}
2^{-2jt-2j't'+2jr_1-2j'r_2+d\max\{j,j'\}}\right\}.
\end{align}
Let \(\bfU^{(j')}_{\widetilde I_{j'}}\) and \(\bfF^{(j')}_{j'}\) denote the
submatrices of \(\bfU_{\widetilde I_{j'}}\) and \(\bfF_{j'}\) formed by
selecting the same \(N_{j'}\) sample rows. Define the \(j'\)-th column block of \(\widehat{\bfA}^{\top}\):
\begin{align}\label{eq:estimator_column_box}
(\widehat{\bfA}^{\top})_{\widetilde I_{j'},j'}
:=
\big((\bfU^{(j')}_{\widetilde I_{j'}})^{\top}
\bfU^{(j')}_{\widetilde I_{j'}}\big)^{-1}
(\bfU^{(j')}_{\widetilde I_{j'}})^{\top}
\bfF^{(j')}_{j'} .
\end{align}
All other entries of \(\widehat{\bfA}^{\top}\) are set to zero.

\vspace{0.3em}
\noindent\textbf{Step 2:} Final restriction and operator construction.
\vspace{0.3em}

Define the operator estimator \(\widehat{\mcA}\) by retaining only the entries of \(\widehat{\bfA}^{\top}\) indexed by \(\mcE_N\):
\begin{align}\label{eq:estimator_operator_box}
\widehat{\mcA}
:= \sum_{(\lambda,\lambda') \in \mcE_N}
(\widehat{\bfA}^{\top})_{\lambda,\lambda'}\,
\varphi_{\lambda'} \otimes \varphi_{\lambda}.
\end{align}
\end{tcolorbox}

\smallskip

\begin{remark}[Choice of $N_{j'}$]

The two terms in the definition of \(N_{j'}\) play different roles. The term
\(2^{\widetilde J(j')d}+\log((\log N)/\delta)\) ensures that the columnwise least-squares problem over the enlarged regression support is well-conditioned uniformly over the relevant column scales. The logarithmic factor comes from taking a union bound over \(\mcO(\log N)\) possible values of \(j'\).

The second term controls the stochastic error: for every retained block \((j,j')\in\mcE_N\), it ensures that the weighted blockwise standard deviation is at most of order \(N^{-\gamma}\). Since the exponent inside the supremum is a convex piecewise-affine function of \(j\), the supremum is attained at one of the endpoints \(j=0\) or \(j=J(j')\).

Finally, the choice is feasible: the definition of \(\gamma\) implies that the right-hand side of \eqref{eq:Njprime_box} is \(\lesssim N\) uniformly over all relevant \(j'\). Thus, after adjusting constants, the estimator uses at most the available sample size; in the borderline case, it simply uses all \(N\) samples.
\end{remark}

\smallskip

The following proposition shows that the estimator constructed above attains the minimax rate $N^{-\gamma}$, up to logarithmic factors, with high probability. An in-expectation bound is established in
Appendix~\ref{app:truncated_estimator} for a stability-truncated version of the same estimator.

\begin{proposition}[Minimax upper bound]\label{prop:upper_bound}

Under the setting of Theorem~\ref{thm:main1}, the following holds. For any $\delta\in (0,1)$, if $N\gtrsim  \log(1/\delta)$, then with probability at least $1-\delta$, the estimator $\widehat{\mcA}$ defined in \eqref{eq:estimator_operator_box} satisfies
\[
\|\widehat{\mcA}-\mcA\|_{H^t\to H^{-t'}}
\lesssim N^{-\gamma}\,
\sqrt{\log\!\Big(\frac{N}{\delta}\Big)}\,
\log N.
\]
\end{proposition}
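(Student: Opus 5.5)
We decompose the error $\bfD^{-t}(\widehat{\bfA}^\top\mathbb 1_{\mcE_N}-\bfA^\top)\bfD^{-t'}$ (restriction to $\mcE_N$ understood entrywise) into a \emph{truncation bias} and an \emph{estimation error}. The truncation bias is $\bfA^\top\mathbb 1_{\mcE_N^c}$. The estimation error is $(\widehat{\bfA}^\top-\bfA^\top)\mathbb 1_{\mcE_N}$.

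\textbf{Truncation bias.} Split $\mcE_N^c$ into column strips $j'\le J'_{\max}:=\lceil\gamma\log_2N/(t'-s')\rceil$ with rows $j>J(j')$, plus the tail $j'>J'_{\max}$. By Lemma~\ref{lemma:property_UAW}\ref{boundedness}, $\bfA^\top=\bfD^{s}\bfB\bfD^{s'}$ with $\|\bfB\|\le R'$. For the tail, the weighted block is bounded by $\|\bfD^{-(t-s)}\|\,R'\,2^{-J'_{\max}(t'-s')}\lesssim N^{-\gamma}$. For each strip $j'$, the weighted error is at most $R'\,2^{-j'(t'-s')}2^{-(J(j')+1)(t-s)}\lesssim N^{-\gamma}$. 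We sum over the $O(\log N)$ strips with the triangle inequality, which gives $\lesssim N^{-\gamma}\log N$.

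\textbf{Estimation error, column by column.} Fix $j'\le J'_{\max}$. Substituting \eqref{eq:model_column} into \eqref{eq:estimator_column_box} gives
\[
(\widehat{\bfA}^\top)_{\widetilde I_{j'},j'}-(\bfA^\top)_{\widetilde I_{j'},j'}
=\bfG^{-1}\bfU_{\widetilde I}^{\top}\big(\bfU_{\widetilde I^c}(\bfA^\top)_{\widetilde I^c,j'}+\bfW_{j'}\big),
\]
where $\bfG=\bfU_{\widetilde I}^\top\bfU_{\widetilde I}$ and everything is restricted to the $N_{j'}$ selected rows. The first step is conditioning. Apply the preconditioning of Lemma~\ref{lemma:property_UAW}\ref{preconditioning_data} to the whitened design $\bfU_{\widetilde I}\bfD^{r_1}$, which has dimension $\asymp 2^{\widetilde J(j')d}$. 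A Gaussian covariance concentration bound combined with the first term of \eqref{eq:Njprime_box} then shows that $\bfD^{-r_1}\bfG\bfD^{-r_1}/N_{j'}$ has eigenvalues in $[c_-/2,2c_+]$. This holds with probability $1-\delta/\log N$ simultaneously, after a union bound over $j'$.

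\textbf{Noise term.} Condition on $\bfU$. For $j\le J(j')$, the weighted block $2^{-jt-j't'}[\bfG^{-1}\bfU_{\widetilde I}^\top\bfW_{j'}]_{j,\cdot}$ is a Gaussian matrix. Its row covariance is controlled by $2^{2jr_1}/N_{j'}$, using the conditioning step together with the independence of $\bfW$ and $\bfU$. Its column covariance is controlled by $2^{-2j'r_2}$, using Lemma~\ref{lemma:property_UAW}\ref{preconditioning_noise}. Chevet's inequality, or a Gaussian matrix deviation bound, gives a norm bound
\[
\lesssim 2^{-jt-j't'+jr_1-j'r_2+\frac d2\max\{j,j'\}}\sqrt{\log(N/\delta)}/\sqrt{N_{j'}},
\]
which is $\lesssim N^{-\gamma}\sqrt{\log(N/\delta)}$ by the second term of \eqref{eq:Njprime_box}. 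Summing over $O(\log N)$ row blocks and $O(\log N)$ columns naively gives $\log^2N$. To get a single $\log N$, we combine the row blocks of one column via a slightly sharper whole-column Chevet bound: the supremum in \eqref{eq:Njprime_box} dominates each row scale, and the total Gaussian width is at most a $\sqrt{\log N}$ multiple, which is absorbed. We then apply the triangle inequality over columns.

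\textbf{Omitted-variable term (main obstacle).} The term $\bfG^{-1}\bfU_{\widetilde I}^\top\bfU_{\widetilde I^c}(\bfA^\top)_{\widetilde I^c,j'}$ is the omitted-variable bias. We write $\bfU_{\widetilde I^c}=\bfU_{\widetilde I}\bfC_{\widetilde I}^{-1}\bfC_{\widetilde I,\widetilde I^c}+\bfZ$, where $\bfZ$ is Gaussian and independent of $\bfU_{\widetilde I}$. This turns the term into a population projection piece plus a conditionally Gaussian fluctuation $\bfG^{-1}\bfU_{\widetilde I}^\top\bfZ(\bfA^\top)_{\widetilde I^c,j'}$.

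\emph{Population projection piece.} Diagonal preconditioning implies decay of the off-diagonal blocks of $\bfD^{r_1}\bfC_u\bfD^{r_1}$. Using this with $\|(\bfA^\top)_{\widetilde I^c,j'}\|\lesssim 2^{(\widetilde J+1)s+j's'}$, the weighted contribution is bounded by roughly $2^{-\widetilde J(j')(\min\{r_1,t\}-s)-j'(t'-s')}\lesssim N^{-\gamma}$. Here the enlarged cutoff $\widetilde J$ of Definition~\ref{def:regression_region} is exactly what makes the exponent $\min\{r_1,t\}-s$ appear. When $r_1<t$, the cross-covariance only decays like $2^{-r_1\cdot}$, and the nesting $I_{j'}\subseteq\widetilde I_{j'}$ is what buys the margin.

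\emph{Fluctuation piece.} We treat it like the noise term. The effective noise covariance is $\bfZ(\bfA^\top)_{\widetilde I^c,j'}$, whose per-sample variance is $\lesssim 2^{-2(\widetilde J+1)(r_1-s)+2j's'}$. This is small enough under the same choice of $N_{j'}$.

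Making the cross-covariance decay rigorous from Lemma~\ref{lemma:property_UAW} alone, which gives only spectral equivalence and no block decay, is the delicate point. If it fails, the fallback is to bound $\|\bfD^{r_1}_{\widetilde I}\bfC_{\widetilde I}^{-1}\bfC_{\widetilde I,\widetilde I^c}\bfD^{r_1}_{\widetilde I^c}\|\lesssim 1$ via Schur-complement bounds. The required decay would then come entirely from the weights $2^{-jt}$ on retained rows versus $2^{-(\widetilde J+1)(r_1-s)}$ on the omitted ones.

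Combining the bias, noise, and omitted-variable bounds on the intersection of the high-probability events gives the stated bound.
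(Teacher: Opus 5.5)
Your proposal is correct provided you commit to your fallback for the population omitted-variable piece. For that term it takes a genuinely different route from the paper.

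\textbf{Bias and noise.} These follow the paper in substance. You aggregate with the triangle inequality and a whole-column Chevet bound; the $\sqrt{\log N}$ coming from summing Frobenius norms over row scales enters additively there and is absorbed by $\sqrt{\log(N/\delta)}$. The paper instead uses the block Schur test, which gives the single factor $\log N$ directly from blockwise bounds.

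\textbf{Omitted-variable term.} The paper writes $\mathsf{OVB}_{j'}=\bfD^{r_1}(\widehat{\bar\Sigma}_{\widetilde I_{j'},\widetilde I_{j'}})^{-1}\widehat{\bar\Sigma}_{\widetilde I_{j'},\widetilde I_{j'}^c}\bfD^{-(r_1-s)}\bfD^{-s}(\bfA^\top)_{\widetilde I_{j'}^c,j'}$. It bounds the inverse by $O(1)$, and bounds the empirical cross-covariance by its population part plus the dimension-free concentration inequality of Lemma~\ref{lemma:sample-cross-covariance}.

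You instead regress the omitted Gaussian coordinates on the retained ones. Since $\bfG^{-1}\bfU_{\widetilde I}^\top\bfU_{\widetilde I}=I$, the projection part is exactly the deterministic population bias $(\bfC_u)_{\widetilde I,\widetilde I}^{-1}(\bfC_u)_{\widetilde I,\widetilde I^c}(\bfA^\top)_{\widetilde I^c,j'}$. The remainder is conditionally Gaussian given $\bfU_{\widetilde I}$, so your Chevet argument for the noise applies verbatim. Your route avoids the external cross-covariance lemma. It also makes transparent that least squares on $\widetilde I_{j'}$ is conditionally unbiased for the population best-linear-predictor coefficient. The paper's route avoids the conditional decomposition and keeps the OVB analysis a single deterministic-plus-concentration estimate.

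Three points need fixing.

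First, off-diagonal block decay of $\bfD^{r_1}\bfC_u\bfD^{r_1}$ does not follow from Lemma~\ref{lemma:property_UAW}. The two-block example in \S\ref{subsubsec:input} is compatible with those bounds yet has order-one preconditioned correlation between scales $0$ and $L$. So the fallback is mandatory, and it is simpler than you suggest. With $\bar\Sigma=\bfD^{r_1}\bfC_u\bfD^{r_1}$, one has $\bfD^{-r_1}_{\widetilde I}(\bfC_u)_{\widetilde I,\widetilde I}^{-1}(\bfC_u)_{\widetilde I,\widetilde I^c}\bfD^{r_1}_{\widetilde I^c}=\bar\Sigma_{\widetilde I,\widetilde I}^{-1}\bar\Sigma_{\widetilde I,\widetilde I^c}$; note $\bfD^{-r_1}$, not $\bfD^{r_1}$, on the left. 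Its norm is at most $c_+/c_-$ directly, with no Schur-complement argument needed.

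Second, $\|(\bfA^\top)_{\widetilde I^c,j'}\|\lesssim 2^{(\widetilde J+1)s+j's'}$ is false when $s>0$, since that block can have unbounded norm. You must pair $\bfD^{-r_1}$ with $\bfD^{s}$ and use $\|\bfD^{-s}(\bfA^\top)_{\widetilde I^c,j'}\|\lesssim R'2^{j's'}$. This gives the weighted block bound $2^{j(r_1-t)-\widetilde J(j')(r_1-s)-j'(t'-s')}$, and you then need the paper's two-case check:
\begin{itemize}
\item for $r_1<t$, the factor $2^{j(r_1-t)}\le 1$;
\item for $r_1\ge t$, the factor grows in $j$ and is controlled only because $j\le J(j')=\widetilde J(j')$, i.e.\ by the final restriction to $\mcE_N$.
\end{itemize}

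Third, in the fluctuation piece, Chevet also needs the trace of the row covariance $\Sigma'$ of $\bfZ(\bfA^\top)_{\widetilde I^c,j'}$. You have $\tr\Sigma'\lesssim 2^{\widetilde J(j')d}\,2^{-2\widetilde J(j')(r_1-s)+2j's'}$, using $r_1-s>d/2$. It is the first term $2^{\widetilde J(j')d}$ of $N_{j'}$, not the variance term, that absorbs this.
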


\begin{proof}

For a matrix \(\bfB\in \mathbb R^{\Lambda\times\Lambda}\) and an index set
\(S\subseteq \Lambda\times\Lambda\), we denote by \(\bfB_S\) the matrix obtained
by retaining the entries of \(\bfB\) indexed by \(S\) and setting all remaining
entries to zero. We write \(\bfB_{S^c}\) for the analogous restriction to
\(S^c:=(\Lambda\times\Lambda)\setminus S\). Thus,
\[
\bfB=\bfB_S+\bfB_{S^c}.
\]

We begin with the operator-norm equivalence
\begin{align}\label{eq:error_decomposition_aux1}
\|\widehat{\mcA}-\mcA\|_{H^{t}\to H^{-t'}}
&\asymp
\| \bfD^{-t}((\widehat{\bfA}^{\top})_{\mcE_N}-\bfA^{\top})\bfD^{-t'}\| \nonumber \\
&\le
\| \bfD^{-t} (\bfA^{\top})_{\mcE_N^c} \bfD^{-t'} \|
+
\|\bfD^{-t}((\widehat{\bfA}^{\top})_{\mcE_N}-(\bfA^{\top})_{\mcE_N})\bfD^{-t'}\| \nonumber\\
&=
\underbrace{\| \bfD^{-t} (\bfA^{\top})_{\mcE_N^c} \bfD^{-t'} \|}_{=:\ErrorBias}
+
\underbrace{\|\bfD^{-t}(\widehat{\bfA}^{\top}-\bfA^{\top})_{\mcE_N}\bfD^{-t'}\|}_{=:\ErrorEstimation}.
\end{align}

Recall that, for each column scale \(j'\le \lceil \frac{\gamma \log_2 N}{t'-s'} \rceil \), the estimator in
\eqref{eq:estimator_column_box} is defined by
\[
(\widehat{\bfA}^{\top})_{\widetilde I_{j'},j'}
:=
\big((\bfU^{(j')}_{\widetilde I_{j'}})^{\top}
\bfU^{(j')}_{\widetilde I_{j'}}\big)^{-1}
(\bfU^{(j')}_{\widetilde I_{j'}})^{\top}
\bfF^{(j')}_{j'} .
\]

Moreover, the \(j'\)-th block-column equation in \eqref{eq:model_column},
restricted to the corresponding sample rows, can be written as
\[
\bfF^{(j')}_{j'}
=
\bfU^{(j')}(\bfA^{\top})_{\cdot,j'}+\bfW^{(j')}_{j'}
=
\bfU^{(j')}_{\widetilde I_{j'}}(\bfA^\top)_{\widetilde I_{j'},j'}
+
\bfU^{(j')}_{\widetilde I_{j'}^c}(\bfA^\top)_{\widetilde I_{j'}^c,j'}
+
\bfW^{(j')}_{j'} .
\]
Here \((\cdot)^{(j')}\) denotes restriction to the same set of
$\min\{N_{j'},N\}$ sample rows. Substituting this block-column equation into the definition of the estimator yields
\[
(\widehat{\bfA}^{\top}-\bfA^\top)_{\widetilde I_{j'},j'}
=
\underbrace{
\big((\bfU^{(j')}_{\widetilde I_{j'}})^{\top}
\bfU^{(j')}_{\widetilde I_{j'}}\big)^{-1}
(\bfU^{(j')}_{\widetilde I_{j'}})^{\top}
\bfU^{(j')}_{\widetilde I_{j'}^c}
(\bfA^\top)_{\widetilde I_{j'}^c,j'}
}_{=:\mathsf{OVB}_{j'}}
+
\underbrace{
\big((\bfU^{(j')}_{\widetilde I_{j'}})^{\top}
\bfU^{(j')}_{\widetilde I_{j'}}\big)^{-1}
(\bfU^{(j')}_{\widetilde I_{j'}})^{\top}
\bfW^{(j')}_{j'}
}_{=:\mathsf{Var}_{j'}}.
\]

Let \(\bfB^{\mathrm{ovb}}\) and \(\bfB^{\mathrm{var}}\) be the matrices defined block-columnwise by
\[
(\bfB^{\mathrm{ovb}})_{\widetilde I_{j'},j'}:=\mathsf{OVB}_{j'},
\qquad
(\bfB^{\mathrm{var}})_{\widetilde I_{j'},j'}:=\mathsf{Var}_{j'},
\]
for all relevant \(j'\), with all remaining entries set to zero. Therefore,
\[
(\widehat{\bfA}^{\top}-\bfA^\top)_{\mcE_N}
=
(\bfB^{\mathrm{ovb}})_{\mcE_N}
+
(\bfB^{\mathrm{var}})_{\mcE_N}.
\]
Hence,
\[
\ErrorEstimation
\le
\underbrace{
\|\bfD^{-t}(\bfB^{\mathrm{ovb}})_{\mcE_N}\bfD^{-t'}\|
}_{=:\ErrorOVB}
+
\underbrace{
\|\bfD^{-t}(\bfB^{\mathrm{var}})_{\mcE_N}\bfD^{-t'}\|
}_{=:\ErrorVar}.
\]

Consequently,
\[
\|\widehat{\mcA}-\mcA\|_{H^{t}\to H^{-t'} }
\lesssim  \ErrorBias+\ErrorOVB+\ErrorVar.
\]
Lemma \ref{lemma:Error-Bias} gives the deterministic bound
\[
\ErrorBias
=
\| \bfD^{-t} (\bfA^{\top})_{\mcE_N^c} \bfD^{-t'} \|
\lesssim R' N^{-\gamma}.
\]
Moreover, Lemmas \ref{lemma:Error-OVB} and \ref{lemma:Error-Var} show that, for any \(\delta\in(0,1)\), if
\begin{align}\label{eq:sample_size_condition}
N\gtrsim \log (1/\delta),
\end{align}
then with probability at least \(1-\delta\),
\[
\ErrorOVB =\|\bfD^{-t}(\bfB^{\mathrm{ovb}})_{\mcE_N}\bfD^{-t'}\| \lesssim N^{-\gamma} \log N,
\]
and
\[
\ErrorVar = \|\bfD^{-t}(\bfB^{\mathrm{var}})_{\mcE_N}\bfD^{-t'}\|\lesssim
N^{-\gamma}
\sqrt{\log\!\Big(\frac{N}{\delta}\Big)}
\log N.
\]
Combining these estimates yields
\[
\|\widehat{\mcA}-\mcA\|_{H^{t}\to H^{-t'}}
\lesssim
R'N^{-\gamma}
+
N^{-\gamma}\log N
+
N^{-\gamma}\sqrt{\log\!\Big(\frac{N}{\delta}\Big)}\log N,
\]
with probability at least \(1-\delta\). This proves the claim.
\end{proof}

In the proof of Proposition \ref{prop:upper_bound}, we invoked Lemmas~\ref{lemma:Error-Bias}, \ref{lemma:Error-OVB}, and \ref{lemma:Error-Var} to bound, respectively, the bias, the omitted-variable bias, and the variance term. The proofs of these results are provided in Appendices~\ref{app:bias}, \ref{app:ovb}, and~\ref{app:var}. Recall that $\gamma$ is the exponent defined in \eqref{eq:gamma}.

\begin{lemma}[Bound for $\ErrorBias$]
\label{lemma:Error-Bias}
Under the setting of Theorem~\ref{thm:main1}, it holds that
\[
\ErrorBias=\| \bfD^{-t} (\bfA^{\top})_{\mcE_N^c} \bfD^{-t'} \|\lesssim N^{-\gamma}.
\]
\end{lemma}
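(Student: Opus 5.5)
The plan is to factor out the Sobolev weights that $\bfA$ is known to satisfy. Set $\bfB:=\bfD^{-s}\bfA^\top\bfD^{-s'}$, so $\|\bfB\|\le R'$ by Lemma~\ref{lemma:property_UAW}\ref{boundedness}. Since restriction to an index set commutes with diagonal scaling,
\[
\bfD^{-t}(\bfA^\top)_{\mcE_N^c}\bfD^{-t'}=\bfD^{-(t-s)}\,\bfB_{\mcE_N^c}\,\bfD^{-(t'-s')}.
\]
Write $a:=t-s>0$, $b:=t'-s'>0$ and $L:=\gamma\log_2N$. On $\mcE_N^c$ we have $ja+j'b>L$, so every surviving entry carries a weight $2^{-ja-j'b}<N^{-\gamma}$. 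The difficulty is that restricting $\bfB$ to a non-rectangular set is not a contraction in operator norm. The whole proof is therefore about splitting $\mcE_N^c$ into pieces on which restriction \emph{is} harmless.

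First I would cover $\mcE_N^c$ by blocks of the form $\Lambda_{I}\times\Lambda_{K}$, with $I$ and $K$ sets of consecutive scales, since these are genuine submatrices. For a submatrix, $\|\bfB_{I,K}\|\le\|\bfB\|\le R'$. Hence such a piece contributes at most $R'\,2^{-a\min I-b\min K}$.

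The naive cover is column by column: for each $j'$, take the tail rows $I_{j'}^c=\{j>J(j')\}$, and for $j'>\lceil L/b\rceil$ take whole columns. Each piece then has weighted norm $\lesssim R'2^{-L}=R'N^{-\gamma}$. But there are $\asymp\log N$ such pieces, which only gives $N^{-\gamma}\log N$. That is the main obstacle, since the lemma claims no logarithm.

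To remove the logarithm I would use a staircase decomposition with geometric slack. Choose thresholds so that the $k$-th piece is a rectangle $\Lambda_{I_k}\times\Lambda_{K_k}$ with
\[
a\min I_k+b\min K_k\ \ge\ L+c\,k
\]
for some fixed $c>0$, while the rectangles still cover $\mcE_N^c$.

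Concretely, I would take dyadically spaced row bands $I_k=[\,L/a-2^{k},\,L/a-2^{k-1})$, going downward from the input-side corner. The matching column tails are $K_k=\{j'\ge (L-a(L/a-2^{k}))/b\}$. This piece sits strictly outside the triangle by a margin proportional to $2^{k-1}a$. The margin comes from the fact that the band's lowest row is $2^{k-1}$ scales below its top row, while the column threshold is set by the top row. Widening each band slightly, so that consecutive bands overlap, lets the cover remain complete.

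Symmetric pieces handle the region near the output-side corner, and the two axis tails $\{j>L/a\}\times\Lambda$ and $\Lambda\times\{j'>L/b\}$ are single rectangles. Summing the geometric series $\sum_k R'2^{-L-c'2^{k-1}}$ gives $\lesssim R'N^{-\gamma}$.

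If the covering geometry turns out to be messier than this, the fallback I would try first is a Schur-test argument. Since the weights factor as $2^{-ja}\cdot2^{-j'b}$, I would bound the weighted indicator of $\{ja+j'b>L\}$ as an integral over a threshold $\tau$ of products $\mathbb 1_{ja>\tau}2^{-\varepsilon(ja-\tau)}\cdot \mathbb 1_{j'b>L-\tau}2^{-\varepsilon(j'b-L+\tau)}$. Each such product is a Schur multiplier of norm at most $1$ times a rectangular restriction, and the remaining constant scales like $2^{-L}$.
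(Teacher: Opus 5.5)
\textbf{There is a genuine gap: the staircase cover with geometric slack cannot exist.}

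In your construction the corner of $\Lambda_{I_k}\times\Lambda_{K_k}$ is $(L/a-2^k,\,a2^k/b)$. This point lies exactly on the line $ja+j'b=L$. Your rectangle bound $R'2^{-a\min I_k-b\min K_k}$ is governed by the smallest row and column indices, so it returns $R'2^{-L}$ for every $k$. The margin $a2^{k-1}$ is present only at the band's largest-$j$ row, where it does not affect the norm.

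The cover also fails. Cells with $j\in I_k$ and $(L-ja)/b<j'<a2^k/b$ are left uncovered. Letting bands overlap either breaks the identity $\bfB_{\mcE_N^c}=\sum_k\bfB_{R_k}$ or pushes corners into $\mcE_N$.

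No modification repairs this. Suppose a rectangle $\Lambda_I\times\Lambda_K\subseteq\mcE_N^c$ contains a cell with $ja+j'b\le L+\max\{a,b\}$. Then $a\min I+b\min K\le L+\max\{a,b\}$, so the rectangle has no slack. Now set $j'(j):=\min\{j':ja+j'b>L\}$. Cells $(j,j'(j))$ on different steps of the staircase cannot share such a rectangle, because the product structure would force it to contain a cell of $\mcE_N$. Hence every partition has $\gtrsim\log N$ slack-free pieces, and the triangle inequality with $\|\bfB_{I,K}\|\le R'$ never beats $N^{-\gamma}\log N$.

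Your fallback has the same defect. It is a Schur-multiplier (Hadamard product) decomposition, not the Schur test. Entrywise domination of the weighted indicator does not control Hadamard products with a signed matrix such as $\bfB$. Moreover, the products indexed by $\tau\in[0,L]$ each carry norm of order $2^{-L}$, so their total mass is of order $L2^{-L}\asymp N^{-\gamma}\log N$.

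\textbf{The missing idea.} You never need to control the restricted matrix as a whole, only its single dyadic blocks, and each of these \emph{is} a submatrix: $\|\bfB_{j,j'}\|\le R'$, i.e.\ $\|(\bfA^\top)_{j,j'}\|\le R'2^{js+j's'}$. The paper applies the block Schur test
\[
\|M\|\le\Bigl(\sup_j\sum_{j'}\|M_{j,j'}\|\Bigr)^{1/2}\Bigl(\sup_{j'}\sum_{j}\|M_{j,j'}\|\Bigr)^{1/2}
\]
to $M=\bfD^{-t}(\bfA^\top)_{\mcE_N^c}\bfD^{-t'}$. The test is valid because $M$ is dominated by the nonnegative scalar matrix of its block norms. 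On $\mcE_N^c$, the $(j,j')$ block of $M$ has norm at most $R'2^{-ja-j'b}$.

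For fixed $j$, the admissible $j'$ form a tail whose first term is already below $2^{-L}$. This holds both when $ja\le L$ and when $ja>L$. So each row sum is $\lesssim 2^{-L}/(1-2^{-b})$, and likewise each column sum is $\lesssim 2^{-L}/(1-2^{-a})$. This yields $\lesssim R'N^{-\gamma}$ with no logarithm.

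The Schur test succeeds where rectangle covers fail because it uses the geometric decay along every row and every column of blocks at once, rather than adding up $\log N$ pieces of size $2^{-L}$.
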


\begin{lemma}[Bound for $\ErrorOVB$]\label{lemma:Error-OVB}
Under the setting of Theorem~\ref{thm:main1}, the following holds. For any $\delta\in (0,1)$, if $N\gtrsim \log (1/\delta)$, then with probability at least $1-\delta$,
\begin{align*}
\ErrorOVB=\|\bfD^{-t}(\bfB^{\mathrm{ovb}})_{\mcE_N}\bfD^{-t'}\|\lesssim  N^{-\gamma}\log N.
\end{align*}
\end{lemma}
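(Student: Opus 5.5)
We bound $\ErrorOVB$ one column scale at a time and then reassemble. The key fact is that only finitely many column scales are involved: $0\le j'\le J'_{\max}:=\lceil \gamma\log_2 N/(t'-s')\rceil\lesssim\log N$. Splitting the block-sparse matrix $(\bfB^{\mathrm{ovb}})_{\mcE_N}$ into its column blocks and using the triangle inequality gives
\[
\ErrorOVB\le\sum_{j'\le J'_{\max}}2^{-j't'}\bigl\|\bfD^{-t}_{I_{j'}}(\mathsf{OVB}_{j'})_{I_{j'},\cdot}\bigr\|.
\]
Here the restriction to $\mcE_N$ keeps rows $I_{j'}\subseteq\widetilde I_{j'}$. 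It therefore suffices to show that each summand is $\lesssim N^{-\gamma}$ with probability $1-\delta/(2\log N)$. A union bound then yields the claimed $N^{-\gamma}\log N$.

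Fix $j'$ and write $n=N_{j'}$, $\bfU_1=\bfU^{(j')}_{\widetilde I_{j'}}$, $\bfU_2=\bfU^{(j')}_{\widetilde I^c_{j'}}$, and $\bfZ:=(\bfA^\top)_{\widetilde I^c_{j'},j'}$.

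\textbf{Step 1: precondition and conditionally decouple.} Put $\bfG_1:=\bfU_1\bfD^{r_1}$. By Lemma~\ref{lemma:property_UAW}\ref{preconditioning_data}, its rows are i.i.d.\ Gaussian with covariance having spectrum in $[c_-,c_+]$. Standard Gaussian covariance concentration applies because $n\gtrsim 2^{\widetilde J(j')d}+\log((\log N)/\delta)$. It gives $\sigma_{\min}(\bfG_1^\top\bfG_1/n)\gtrsim1$ with the required probability, so $(\bfU_1^\top\bfU_1)^{-1}\bfU_1^\top=\bfD^{r_1}(\bfG_1^\top\bfG_1)^{-1}\bfG_1^\top$.

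Next, the population OVB is not zero, because $\bfC_u$ is not block-diagonal. We therefore write the Gaussian regression $\bfU_2=\bfU_1\bfB_{21}+\bfE$, where $\bfB_{21}=\bfC_{11}^{-1}\bfC_{12}$ is the population regression coefficient and $\bfE$ is independent of $\bfU_1$. This gives
\[
\mathsf{OVB}_{j'}=\bfB_{21}\bfZ+(\bfU_1^\top\bfU_1)^{-1}\bfU_1^\top\bfE\bfZ.
\]

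\textbf{Step 2: deterministic part.} We need $\|\bfD^{-t}_{I_{j'}}\bfC_{11}^{-1}\bfC_{12}\bfZ\|\lesssim 2^{j't'}N^{-\gamma}$. Write $\bfZ=\bfD^{s}\bfY\bfD^{s'}$ with $\|\bfY\|\le R'$. Diagonal preconditioning implies that the weighted matrix $\bfD^{r_1}_{\widetilde I}\bfC_{11}^{-1}\bfC_{12}\bfD^{r_1}_{\widetilde I^c}$ is bounded; it is a Schur-complement-type block of a well-conditioned matrix. So the term is at most
\[
2^{j's'}\,\|\bfD^{r_1-t}_{I_{j'}}\|\,\|\bfD^{s-r_1}_{\widetilde I^c_{j'}}\|\lesssim 2^{j's'}\,2^{\max(0,J(j')(r_1-t))}\,2^{-\widetilde J(j')(r_1-s)}.
\]
The exponents were chosen so that this balances. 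When $r_1\ge t$ we have $\widetilde J=J$, and $2^{J(r_1-t)-J(r_1-s)}=2^{-J(t-s)}\asymp N^{-\gamma}2^{j'(t'-s')}$. When $r_1<t$, the factor $2^{-\widetilde J(r_1-s)}$ alone matches the target by the definition of $\widetilde J$. This step is exactly where the enlargement $\mcR_N\supseteq\mcE_N$ is needed.

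\textbf{Step 3: stochastic part.} Conditionally on $\bfU_1$, the matrix $\bfE\bfZ\bfD^{-s'}$ has i.i.d.\ Gaussian rows. Their covariance is $\bfY^\top\bfD^{s}\Sigma_{E}\bfD^{s}\bfY$, whose norm is $\lesssim 2^{-2\widetilde J(r_1-s)}$, since $\Sigma_E\preceq \bfC_{22}$ is preconditioned at scales $>\widetilde J$.

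Apply Chevet's inequality (or Gaussian operator-norm concentration) to the projection $\bfG_1^\top\bfE\bfZ$. With $\sigma_{\min}(\bfG_1^\top\bfG_1)\gtrsim n$, we get
\[
\bigl\|(\bfG_1^\top\bfG_1)^{-1}\bfG_1^\top\bfE\bfZ\bfD^{-s'}\bigr\|\lesssim n^{-1/2}\bigl(2^{\widetilde Jd/2}+\sqrt{\log(\log N/\delta)}\bigr)\,2^{-\widetilde J(r_1-s)}\,\mathrm{tr}\text{-type factor}.
\]
The effective-rank factor is bounded because the covariance $2^{-2j(r_1-s)}$ is summable over $j>\widetilde J$ when $r_1-s>d/2$. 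Since $n\gtrsim 2^{\widetilde Jd}$, this term is dominated by the deterministic bound of Step 2 after applying $\bfD^{r_1-t}$ and the $2^{j's'-j't'}$ weights.

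The main obstacle is Step 3. The operator-norm concentration must be uniform over the infinite-dimensional residual $\bfE\bfZ$, and the output dimension $2^{j'd}$ could exceed $2^{\widetilde J d}$. I would first try conditioning on $\bfU_1$ and bounding the Gaussian process $\sup_{x,y}\langle \bfG_1 x,\bfE\bfZ y\rangle$ via Chevet. If the $2^{j'd/2}$ width term is too large, I would absorb it using the fact that $n$ also dominates the second term of \eqref{eq:Njprime_box}, which carries $2^{d\max\{j,j'\}}$.
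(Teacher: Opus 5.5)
Your proposal is correct in substance, but it takes a different route from the paper.

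\textbf{How the paper argues.} The paper never isolates a regression residual. It writes
$(\bfB^{\mathrm{ovb}})_{\widetilde I_{j'},j'}=\bfD^{r_1}(\widehat{\bar\Sigma}_{\widetilde I_{j'},\widetilde I_{j'}})^{-1}\widehat{\bar\Sigma}_{\widetilde I_{j'},\widetilde I_{j'}^c}\bfD^{-(r_1-s)}\,\bfD^{-s}(\bfA^\top)_{\widetilde I_{j'}^c,j'}$.
It then splits the \emph{sample} cross-covariance into its population part plus a fluctuation. The fluctuation is controlled with the dimension-free Gaussian cross-covariance inequality of Lemma~\ref{lemma:sample-cross-covariance}, using $r_1-s>d/2$ to bound the trace on the omitted side. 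This yields the blockwise bound $\|(\bfB^{\mathrm{ovb}})_{j,j'}\|\lesssim 2^{jr_1+j's'-\widetilde J(j')(r_1-s)}$, which the paper reassembles with the block Schur test.

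\textbf{How your route differs.} You use the column-wise triangle inequality over the $O(\log N)$ column scales, which costs the same $\log N$. You also use the Gaussian regression decomposition $\bfU_2=\bfU_1\bfC_{11}^{-1}\bfC_{12}+\mathbf E$, with $\mathbf E$ independent of $\bfU_1$. This buys two things:
\begin{enumerate}
\item The deterministic part becomes purely population-level.
\item The stochastic part becomes an independent Gaussian matrix projected by $(\bfU_1^\top\bfU_1)^{-1}\bfU_1^\top$, which is the same structure as the variance term. Chevet's inequality (Lemma~\ref{lemma:chevet}) therefore replaces the cross-covariance concentration theorem.
\end{enumerate}
What you give up is modularity. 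Your argument relies on exact Gaussian independence of the residual. The paper's bound is a statement about the design alone: it is uniform over the whole omitted space and independent of $\bfA$ and of the output scale.

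Your Step~2 arithmetic reproduces the paper's final claim computation, including the role of $\widetilde J$ when $r_1<t$. There is one sign slip: the bounded matrix is $\bfD^{-r_1}_{\widetilde I}\bfC_{11}^{-1}\bfC_{12}\bfD^{r_1}_{\widetilde I^c}=\bar\Sigma_{11}^{-1}\bar\Sigma_{12}$, which is what your displayed bound actually uses.

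\textbf{The step you flag as the main obstacle.} It is not an obstacle, but the fallback you propose would fail, so close it as follows. Write $\bfZ\bfD^{-s'}=\bfD^s_{\widetilde I^c_{j'}}\mathbf Y$ with $\|\mathbf Y\|\le R'$, and factor $\mathbf Y$ out \emph{before} applying Chevet. With $M:=(\bfG_1^\top\bfG_1)^{-1}\bfG_1^\top$,
\[
\|M\mathbf E\bfZ\bfD^{-s'}\|\le R'\,\|M\mathbf E\bfD^{s}_{\widetilde I^c_{j'}}\|.
\]
Conditionally on $\bfU_1$, the rows of $\mathbf E\bfD^s_{\widetilde I^c_{j'}}$ are i.i.d.\ with covariance $K\preceq c_+\bfD^{2(s-r_1)}_{\widetilde I^c_{j'}}$. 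Hence $\|K\|\lesssim 2^{-2\widetilde J(j')(r_1-s)}$ and $\tr(K)\lesssim 2^{\widetilde J(j')d-2\widetilde J(j')(r_1-s)}$. Combined with $\|M\|\lesssim N_{j'}^{-1/2}$ and $\|M\|_{\mathrm F}\lesssim (2^{\widetilde J(j')d}/N_{j'})^{1/2}$, Chevet gives $\|M\mathbf E\bfD^s_{\widetilde I^c_{j'}}\|\lesssim 2^{-\widetilde J(j')(r_1-s)}$ with the required probability. This bound has no dependence on $|\nabla_{j'}|$; equivalently, $\tr(\mathbf Y^\top K\mathbf Y)\le\|\mathbf Y\|^2\tr(K)$. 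It is the same reason the paper bounds the cross-covariance on the full omitted space and only then multiplies by $\|\bfD^{-s}(\bfA^\top)_{\widetilde I^c_{j'},j'}\|\lesssim 2^{j's'}$.

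\textbf{Why the fallback fails.} If you keep the crude factor $|\nabla_{j'}|^{1/2}$, the second term of \eqref{eq:Njprime_box} cannot absorb it. At the finest column scale, $J(j')$ and $\widetilde J(j')$ are $O(1)$. There that term only guarantees $N_{j'}\gtrsim N^{\gamma(d-2(r_2+s'))/(t'-s')}$, while $|\nabla_{j'}|\asymp N^{\gamma d/(t'-s')}$. The bound you would obtain then exceeds $N^{-\gamma}$ by a positive power of $N$ whenever $r_2+s'>0$.
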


\begin{lemma}[Bound for $\ErrorVar$]\label{lemma:Error-Var}
Under the setting of Theorem~\ref{thm:main1}, the following holds. For any $\delta\in (0,1)$, if $N\gtrsim\log (1/\delta)$, then with probability at least $1-\delta$,
\[
\ErrorVar=\|\bfD^{-t}(\bfB^{\mathrm{var}})_{\mcE_N}\bfD^{-t'}\|\lesssim N^{-\gamma}\,
\sqrt{\log\!\Big(\frac{N}{\delta}\Big)}\,
\log N.
\]
\end{lemma}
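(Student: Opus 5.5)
The statement to prove is Lemma~\ref{lemma:Error-Var}, the bound on $\ErrorVar$. I will decompose $(\bfB^{\mathrm{var}})_{\mcE_N}$ into its scale blocks $(j,j')\in\mcE_N$. There are $\mathcal O((\log N)^2)$ such blocks. For a matrix assembled from blocks, I first split by column scale and then by row scale, using
\[
\|\bfD^{-t}(\bfB^{\mathrm{var}})_{\mcE_N}\bfD^{-t'}\|\le \sum_{j'}\sum_{j\le J(j')} 2^{-jt-j't'}\|(\mathsf{Var}_{j'})_{j}\|,
\]
where $(\mathsf{Var}_{j'})_j$ is the row block at scale $j$ of $\mathsf{Var}_{j'}$. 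A cruder route, summing over column blocks only, leaves one $\log N$ factor. Since the target bound carries one $\log N$, the plan is to bound each column $j'$ by $N^{-\gamma}\sqrt{\log(N/\delta)}$ and sum over the $\mathcal O(\log N)$ columns.

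\textbf{Step 1 (conditioning).} Fix $j'$. Conditionally on $\bfU^{(j')}$, the noise $\bfW^{(j')}_{j'}$ is independent Gaussian, with rows i.i.d. with covariance $(\bfC_w)_{j',j'}$. By Lemma~\ref{lemma:property_UAW}, this covariance is $\preceq c_+2^{-2j'r_2}I$. Write $\bfG:=\bfU^{(j')}_{\widetilde I_{j'}}$ and $\bfM:=(\bfG^\top\bfG)^{-1}\bfG^\top$.

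On the whitened design $\bfG\bfD^{r_1}$, the population covariance is well-conditioned, again by Lemma~\ref{lemma:property_UAW}. The number of samples satisfies $N_{j'}\gtrsim 2^{\widetilde J(j')d}+\log((\log N)/\delta)$, and $\dim\asymp 2^{\widetilde J(j')d}$. A standard Gaussian covariance concentration bound then gives, with probability $1-\delta/(2\log N)$,
\[
\sigma_{\min}\big(\bfD^{r_1}\bfG^\top\bfG\bfD^{r_1}\big)\gtrsim N_{j'}.
\]
Consequently $\|\bfD^{r_1}\bfM^\top\|\lesssim N_{j'}^{-1/2}$. Restricting to the row scale $j$ gives $\|(\bfM)_{j,\cdot}\|\lesssim 2^{jr_1}N_{j'}^{-1/2}$.

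\textbf{Step 2 (Gaussian operator norm).} Conditionally, the block $(\mathsf{Var}_{j'})_j=(\bfM)_{j,\cdot}\bfW^{(j')}_{j'}$ is a Gaussian matrix of the form $P\,Z\,\Sigma^{1/2}$. Here $Z$ has i.i.d. standard normal entries, $\|P\|\lesssim 2^{jr_1}N_{j'}^{-1/2}$, $\|P\|_{\mathrm F}\le 2^{jd/2}\|P\|$, and $\|\Sigma^{1/2}\|\lesssim 2^{-j'r_2}$ with $\|\Sigma^{1/2}\|_{\mathrm F}\lesssim 2^{j'd/2-j'r_2}$. Chevet's inequality combined with Gaussian concentration of the norm (Lipschitz constant $\|P\|\|\Sigma^{1/2}\|$) gives, with probability $1-\delta/(\log N)^2$,
\[
\|(\mathsf{Var}_{j'})_j\|\lesssim 2^{jr_1-j'r_2}N_{j'}^{-1/2}\Big(2^{\frac d2\max\{j,j'\}}+\sqrt{\log((\log N)/\delta)}\Big).
\]

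\textbf{Step 3 (invoking the choice of $N_{j'}$).} Multiplying by $2^{-jt-j't'}$ and using the second term in \eqref{eq:Njprime_box}, $N_{j'}\gtrsim N^{2\gamma}2^{-2jt-2j't'+2jr_1-2j'r_2+d\max\{j,j'\}}$ for every $j\le J(j')$, each block is bounded by $N^{-\gamma}\sqrt{\log(N/\delta)}$.

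\textbf{Step 4 (summing).} The naive double sum over $(j,j')$ would cost $(\log N)^2$, which exceeds the stated bound. To avoid this, for fixed $j'$ I will bound the whole column at once rather than block by block. I apply Chevet directly to $\bfD^{-t}_{I_{j'}}\bfM_{I_{j'},\cdot}\bfW$ and control $\|\bfD^{-t}\bfM\|$ and its Frobenius norm by $\sup_j$ and $\sum_j$ of the blockwise quantities. The Frobenius sum is geometric, being dominated by an endpoint by the convexity noted in the remark on the choice of $N_{j'}$. If the sum is not geometric, for example in flat cases, I accept a $\sqrt{\log N}$ loss, which is still within the stated bound.

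A union bound over all $j'$ (and $j$) then yields the claim with probability $1-\delta$, using $N\gtrsim\log(1/\delta)$. The main obstacle is the conditioning step: obtaining $\|\bfD^{r_1}(\bfG^\top\bfG)^{-1}\bfD^{r_1}\|\lesssim 1/N_{j'}$ uniformly over $j'$ with only the logarithmic overhead in sample size.
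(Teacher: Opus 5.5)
Your proposal is correct. Steps 1--3 coincide with the paper's argument: a good-design event, conditional Chevet for each block $(j,j')\in\mcE_N$, a union bound over the $O((\log N)^2)$ blocks, and the choice of $N_{j'}$, which together give each weighted block $\lesssim N^{-\gamma}\sqrt{\log(N/\delta)}$. Where you differ is the aggregation step.

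The paper finishes with the block Schur test. Every row and every column of $\mcE_N$ contains $O(\log N)$ blocks, so
\[
\ErrorVar\le\Bigl(\sup_j\sum_{j'}\cdot\Bigr)^{1/2}\Bigl(\sup_{j'}\sum_j\cdot\Bigr)^{1/2}\lesssim N^{-\gamma}\sqrt{\log(N/\delta)}\,\log N .
\]
Your Step 3 therefore already closes the proof, and Step 4 is not needed.

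Your Step 4 (a triangle inequality over $O(\log N)$ column scales, with one Chevet application per column) is a valid alternative. Two points should be made explicit.

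\emph{First, the operator-norm factor.} The norm of $2^{-j't'}\bfD^{-t}_{I_{j'}}\mathbf{M}_{I_{j'},\cdot}$ is controlled by the supremum over $j$, not the $\ell^2$-sum, of the blockwise quantities. This holds because of the full Loewner bound
\[
\bfD^{-t}_{I_{j'}}\bigl((\bfG^\top\bfG)^{-1}\bigr)_{I_{j'},I_{j'}}\bfD^{-t}_{I_{j'}}\preceq (cN_{j'})^{-1}\bfD^{2(r_1-t)}_{I_{j'}},
\]
and not merely because of the row-block bounds. A bound assembled only from individual row blocks would put a multiplicative $\sqrt{\log N}$ on the tail term $\|P\|\|\Sigma^{1/2}\|\sqrt{\log(1/\eta)}$, giving $(\log N)^{3/2}$ overall.

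\emph{Second, the Frobenius term.} The possible $\sqrt{\log N}$ from $\sum_{j\le J(j')}2^{j(2(r_1-t)+d)}\le (J(j')+1)\sup_j(\cdots)$ enters Chevet's bound additively. The per-column bound is thus $N^{-\gamma}\bigl(\sqrt{\log N}+\sqrt{\log(N/\delta)}\bigr)\lesssim N^{-\gamma}\sqrt{\log(N/\delta)}$, which is why this loss is harmless. You do not need the geometric-sum or convexity argument at all.

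As for the trade-off, your route needs only $O(\log N)$ union-bound events instead of $O((\log N)^2)$, but yields the same rate. The paper's Schur-test route is shorter because it uses only blockwise information.

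A minor slip: in Step 1, $\|\bfD^{r_1}\mathbf{M}^\top\|$ should read $\|\bfD^{-r_1}\mathbf{M}\|\lesssim N_{j'}^{-1/2}$. That is the form which gives $\|\mathbf{M}_{j,\cdot}\|\lesssim 2^{jr_1}N_{j'}^{-1/2}$.
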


\smallskip

\begin{remark}[Direct regression versus nested-support regression]\label{remark:nested-regression}
We briefly explain, through a finite-dimensional toy model, why the
nested-support regression step reduces the omitted-variable bias without
changing the variance order.

Fix a column scale \(j'\). The \(j'\)-th block-column of the matrix model is
\[
\bfF_{j'}=\bfU(\bfA^\top)_{\cdot,j'}+\bfW_{j'}.
\]
Let \(I_{j'}\) denote the set of input scales retained in the final estimation
region \(\mcE_N\), and let \(\widetilde I_{j'}\supseteq I_{j'}\) denote the
larger set of input scales used in the regression region \(\mcR_N\).

There are two natural procedures.

\paragraph{Direct regression.}
One may regress only on the variables in \(I_{j'}\):
\[
(\widehat{\bfA}^{\top,\mathrm{dir}})_{I_{j'},j'}
:=
(\bfU_{I_{j'}}^\top \bfU_{I_{j'}})^{-1}
\bfU_{I_{j'}}^\top \bfF_{j'} .
\]
Substituting the block-column model gives
\[
(\widehat{\bfA}^{\top,\mathrm{dir}}-\bfA^\top)_{I_{j'},j'}
=
(\bfU_{I_{j'}}^\top \bfU_{I_{j'}})^{-1}
\bfU_{I_{j'}}^\top
\bfU_{I_{j'}^c}
(\bfA^\top)_{I_{j'}^c,j'}
+
(\bfU_{I_{j'}}^\top \bfU_{I_{j'}})^{-1}
\bfU_{I_{j'}}^\top \bfW_{j'} .
\]
Thus the omitted-variable bias is generated by all coefficients outside
\(I_{j'}\).

\paragraph{Nested-support regression.}

Instead, our estimator regresses on the enlarged set
\(\widetilde I_{j'}\), and then keeps only the entries in \(I_{j'}\):
\[
(\widehat{\bfA}^{\top})_{\widetilde I_{j'},j'}
:=
(\bfU_{\widetilde I_{j'}}^\top \bfU_{\widetilde I_{j'}})^{-1}
\bfU_{\widetilde I_{j'}}^\top \bfF_{j'} .
\]
Then
\[
(\widehat{\bfA}^{\top}-\bfA^\top)_{\widetilde I_{j'},j'}
=
(\bfU_{\widetilde I_{j'}}^\top \bfU_{\widetilde I_{j'}})^{-1}
\bfU_{\widetilde I_{j'}}^\top
\bfU_{\widetilde I_{j'}^c}
(\bfA^\top)_{\widetilde I_{j'}^c,j'}
+
(\bfU_{\widetilde I_{j'}}^\top \bfU_{\widetilde I_{j'}})^{-1}
\bfU_{\widetilde I_{j'}}^\top \bfW_{j'} .
\]
After restricting to \(I_{j'}\), the omitted-variable bias now only involves
the coefficients outside the larger set \(\widetilde I_{j'}\), rather than
outside \(I_{j'}\). Since \(\widetilde I_{j'}\supseteq I_{j'}\), this removes
the contribution of the intermediate scales
\(\widetilde I_{j'}\setminus I_{j'}\), which is precisely the gain used in the omitted-variable bias estimate.

It remains to explain why the enlarged regression does not change the variance
order. Write
\[
G_1:=\bfU_{I_{j'}}^\top\bfU_{I_{j'}},
\qquad
G_2:=\bfU_{\widetilde I_{j'}}^\top\bfU_{\widetilde I_{j'}} .
\]
Conditionally on \(\bfU\), the covariance of the direct-regression noise is
controlled by \(G_1^{-1}\), while the covariance of the nested-regression
noise, restricted back to \(I_{j'}\), is controlled by
\[
(G_2^{-1})_{I_{j'},I_{j'}} .
\]
Writing \(\widetilde I_{j'}=I_{j'}\cup K_{j'}\), the Schur complement formula
gives
\begin{align}\label{eq:Schur}
(G_2^{-1})_{I_{j'},I_{j'}}
=
\left(
G_1-G_{12}G_{22}^{-1}G_{21}
\right)^{-1}
\succeq G_1^{-1}.
\end{align}
Thus the nested regression indeed has larger variance in Loewner order.

The preceding comparison is deterministic. To see that the variance increase is only by a constant factor, we use the good design event obtained by combining the population diagonal preconditioning in Lemma~\ref{lemma:property_UAW} with Gaussian sample covariance concentration. Namely, on this high probability event,
\[
cN\bfD_{\widetilde I_{j'}}^{-2r_1}
\preceq
G_2
\preceq
CN\bfD_{\widetilde I_{j'}}^{-2r_1}.
\]
Since \(\bfD^{r_1}\) is diagonal, coordinate restriction commutes with the
preconditioner: for any \(I\subseteq \widetilde I_{j'}\),
\[
\bigl(\bfD_{\widetilde I_{j'}}^{r_1} M
\bfD_{\widetilde I_{j'}}^{r_1}\bigr)_{I,I}
=
\bfD_I^{r_1} M_{I,I}\bfD_I^{r_1}.
\]
Therefore,
\begin{align}\label{eq:G2}
(G_2^{-1})_{I_{j'},I_{j'}}
\preceq
\frac1{cN}\bfD_{I_{j'}}^{2r_1}.
\end{align}

The same good design event, restricted to the coordinates \(I_{j'}\), gives
\[
cN\bfD_{I_{j'}}^{-2r_1}
\preceq
G_1
\preceq
CN\bfD_{I_{j'}}^{-2r_1}.
\]
Consequently,
\begin{align}\label{eq:G1}
G_1^{-1}
\succeq
\frac1{CN}\bfD_{I_{j'}}^{2r_1}.
\end{align}
Combining \eqref{eq:G2} and \eqref{eq:G1} yields
\[
(G_2^{-1})_{I_{j'},I_{j'}}
\preceq
\frac{C}{c}G_1^{-1}.
\]
Together with \eqref{eq:Schur}, we obtain
\[
G_1^{-1}
\preceq
(G_2^{-1})_{I_{j'},I_{j'}}
\preceq
\frac{C}{c}G_1^{-1}.
\]
Hence nested regression may increase the variance, but only by a universal
constant factor. The variance order is unchanged, while the omitted-variable
bias is reduced from the contribution of \(I_{j'}^c\) to that of
\(\widetilde I_{j'}^c\).

In short, nested-support regression improves the bias because it moves scales in
\(\widetilde I_{j'}\setminus I_{j'}\) from the omitted part into the fitted
model. It does not change the variance order because the inverse Gram matrix is
controlled by a restriction-compatible diagonal preconditioner. The key point
is not that enlarging a regression support is harmless in general; rather, it
is harmless here up to constants because the covariance geometry is diagonal in
the multiscale coordinates.

In Appendix~\ref{app:diagonal_input}, we show that if the input covariance
matrix \(\bfC_u\) is exactly diagonal in the wavelet basis, then the
nested-support regression step is unnecessary. In that case, the same estimator
defined in \eqref{eq:estimator_column_box} and \eqref{eq:estimator_operator_box},
with the choice \(\widetilde J(j')\equiv J(j')\), also achieves the minimax
rate \(N^{-\gamma}\) up to logarithmic factors.
\end{remark}

\subsection{Minimax lower bound}\label{subsec:minimax_lower}

The following proposition establishes the minimax lower bound for learning
linear operators under the operator norm \(\|\cdot\|_{H^t\to H^{-t'}}\).

\begin{proposition}[Minimax lower bound]\label{prop:lower_bound}
Under the setting of Theorem~\ref{thm:main1},
\begin{align*}
\inf_{\widehat{\mcA}}\sup_{\mcA\in \mcO(s,s',R)}
\E \, \|\widehat{\mcA}-\mcA\|_{H^{t}\to H^{-t'}}
\,\gtrsim\,
N^{-\min\bigl\{
\frac12,\,
\frac{t-s}{2(r_1-s)+d},\,
\frac{t'-s'}{(2(-r_2-s')+d)_{+}}
\bigr\}}.
\end{align*}
\end{proposition}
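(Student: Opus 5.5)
The plan is to prove the three lower bounds $N^{-1/2}$, $N^{-\frac{t-s}{2(r_1-s)+d}}$ and $N^{-\frac{t'-s'}{(2(-r_2-s')+d)_+}}$ separately, and then take their maximum. By \eqref{eq:matrix_norm} and Lemma~\ref{lemma:property_UAW}, we may work in the matrix model \eqref{eq:discrete_model} with $\bfA^\top\in\mcM(s,s',R')$ and loss $\|\bfD^{-t}\cdot\bfD^{-t'}\|$. Conditionally on $\bfU$, each output coordinate is Gaussian with covariance $\bfC_w$. This Gaussian structure makes the Kullback--Leibler divergences between hypotheses explicit, which is what Assouad's lemma needs. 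For each obstruction we choose a scale pair $(j,j')$ and a hypercube of perturbations supported in a thin strip of the $(j,j')$ block. A perturbation is indexed by $\omega\in\{0,1\}^m$:
\[
\bfA_\omega^\top=\rho\sum_{k=1}^m \omega_k \,\mathbf{e}_{\lambda_k}\mathbf{e}_{\mu_k}^\top .
\]
The amplitude $\rho$ is chosen so that $\|\bfD^{-s}\bfA_\omega^\top\bfD^{-s'}\|\le R'$. A choice giving a partial permutation structure, i.e.\ distinct $\lambda_k$ and distinct $\mu_k$, keeps the operator norm equal to $\rho\,2^{-js-j's'}$.

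\emph{Coarse scale.} Take $j=j'=0$ and a single entry with $\rho\asymp N^{-1/2}$. This is a two-point Le Cam argument: the KL divergence is $\asymp N\rho^2\,\E u_\lambda^2/\mathrm{Var}(w_\mu)=O(1)$.

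\emph{Input side.} Take $j'=0$ and a large $j$. Use a strip consisting of a single output wavelet $\mu$ at scale $0$ and many input wavelets $\lambda_k$ at scale $j$, $k\le m\asymp 2^{jd}$. Here a partial permutation is impossible, so we instead take a row-type perturbation, $\bfA^\top_\omega=\rho\sum_k\omega_k\,\mathbf e_{\lambda_k}\mathbf e_{\mu}^\top$. This is a vector of size $2^{jd}$ whose weighted norm is
\[
\|\bfD^{-s}\bfA_\omega^\top\bfD^{-s'}\|\le\rho\, 2^{-js}2^{jd/2}.
\]
We therefore set $\rho\asymp 2^{js-jd/2}$, up to the hypercube scaling fixed below. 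Flipping one coordinate costs KL divergence $\asymp N\rho_0^2\,2^{-2jr_1}$, by the preconditioning in Lemma~\ref{lemma:property_UAW}, where $\rho_0$ is the per-coordinate amplitude. Assouad's lemma then needs the per-coordinate separation in loss to sum up correctly. The loss of a vector is its $\ell^2$ norm, $2^{-jt}\rho_0\sqrt{m}$, and a Hamming distance $\gtrsim m$ gives separation $\asymp 2^{-jt}\rho_0 2^{jd/2}$. Assouad requires $N\rho_0^2 2^{-2jr_1}\lesssim1$, while the constraint is $\rho_0 2^{-js+jd/2}\le R'$. Balancing the two gives
\[
2^{j(2(r_1-s)+d)}\asymp N,
\]
and the resulting loss is $2^{-j(t-s)}\asymp N^{-\frac{t-s}{2(r_1-s)+d}}$.

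\emph{Output side.} This case is symmetric: take $j=0$, a single input wavelet, and $m\asymp 2^{j'd}$ output wavelets at scale $j'$. The per-coordinate KL divergence is $N\rho_0^2 2^{2j'r_2}$, because the noise variance in that block is $2^{-2j'r_2}$, and the constraint is $\rho_0 2^{-j's'+j'd/2}\le R'$. Balancing gives $2^{j'(2(-r_2-s')+d)}\asymp N$ and loss $N^{-\frac{t'-s'}{2(-r_2-s')+d}}$. When $2(-r_2-s')+d\le0$, the exponent is $+\infty$ under the convention $a/0=\infty$. In that case this term is inactive and only the other two bounds are needed, so there is nothing to prove.

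The main obstacle is the KL computation under a non-diagonal covariance, since $\bfC_u$ and $\bfC_w$ are only equivalent to diagonal ones. The KL divergence between two hypotheses is
\[
\tfrac N2\,\mathrm{tr}\bigl(\bfC_w^{-1}\Delta^\top\bfC_u\Delta\bigr),\qquad \Delta:=\bfA_\omega^\top-\bfA_{\omega'}^\top .
\]
We bound it using the Loewner bounds $\bfC_u\preceq c_+\bfD^{-2r_1}$ and $\bfC_w^{-1}\preceq c_-^{-1}\bfD^{2r_2}$. This suffices because each $\Delta$ is a sum of flipped entries and the trace is monotone in both factors. A second point needs care in the input-side case: the hypercube must produce loss separation additive in Hamming distance. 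Since the loss of a vector is an $\ell^2$ norm, the difference norm is $\rho_0 2^{-jt}\sqrt{H(\omega,\omega')}$, which is not linear in $H$. We handle this with the Varshamov--Gilbert version of Fano's method, which only needs pairwise separation $\gtrsim \rho_0 2^{-jt}\sqrt m$ together with KL divergence $\lesssim \log(\#\text{hypotheses})\asymp m$. The same choices then work with at most logarithmic losses, which the $\asymp_{\log}$ statement allows.
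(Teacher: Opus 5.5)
Your proposal is correct and uses the same construction as the paper. It has the same three thin-strip configurations: the coarse block $(0,0)$; a single scale-$0$ output with $m\asymp 2^{jd}$ inputs at scale $j$; and a single scale-$0$ input with $m\asymp 2^{j'd}$ outputs at scale $j'$. You bound the KL divergence $\frac N2\tr(\bfC_w^{-1}\Delta^\top\bfC_u\Delta)$ by the same preconditioning bounds from Lemma~\ref{lemma:property_UAW}, and you balance the class constraint against the per-flip KL budget in the same way.

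The one real difference is the final information-theoretic step. You switch to Varshamov--Gilbert plus Fano because you believe Assouad needs loss separation that is linear in Hamming distance. The paper instead applies Assouad in the form of Lemma~\ref{lemma:assouad}, whose separation factor is $\min_{H\ge 1}\mathsf d/H$. Since $\mathsf d(\bfA_\theta,\bfA_{\theta'})\asymp \delta\, 2^{-J_{\rm out}t'-J_{\rm in}t}\sqrt{H}$ and $H\le m$, the bound $\sqrt H\ge H/\sqrt m$ gives $\min\mathsf d/H\gtrsim \delta\, 2^{-J_{\rm out}t'-J_{\rm in}t}m^{-1/2}$. The factor $m/4$ then restores exactly $\delta\, 2^{-J_{\rm out}t'-J_{\rm in}t}\sqrt m$. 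So the $\sqrt H$ nonlinearity costs nothing, and Assouad handles all three cases, including $m=1$, uniformly and without a packing lemma.

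Your Fano route is also valid, and your hedge about logarithmic losses is unnecessary. Take Varshamov--Gilbert codewords with pairwise Hamming distance at least $m/8$ and $\log M\gtrsim m$. In the input-side case the KL divergence between any codeword and the origin is $\lesssim N\rho_0^2 2^{-2jr_1} m$, because the KL is additive over flipped entries of a strip. So the same constant-level condition $N\rho_0^2 2^{-2jr_1}\lesssim 1$ suffices, and you obtain the rate up to constants; the output side works the same way with $2^{2j'r_2}$.

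This is worth saying explicitly. The proposition is stated with $\gtrsim$, not $\asymp_{\log}$, so an argument that genuinely lost a logarithmic factor would prove only the weaker statement needed for Theorem~\ref{thm:main1}.
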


The proof proceeds by reducing the problem to the equivalent matrix model and then applying Assouad's lemma via a unified thin-strip construction. The latter is used in three dyadic configurations ---coarse-to-coarse, fine-to-coarse, coarse-to-fine--- which produce the three contributions appearing in the lower bound. We begin by recalling Assouad's lemma.
 
\begin{lemma}[Assouad's lemma {\cite[Lemma 24.3]{van2000asymptotic}, \cite[Theorem 2.12]{Tsybakov2009}}]\label{lemma:assouad}
Let $\Theta=\{0,1\}^m$, and let $T$ be an estimator based on an observation whose distribution belongs to the family $\{P_\theta:\theta\in\Theta\}$. Let $\psi:\Theta\to\mathcal X$ be a parameter map into a metric space  $(\mathcal{X}, \mathsf{d})$, and let
\[
H(\theta,\theta'):=\sum_{j=1}^m \mathbf 1_{\{\theta_j\neq \theta_j'\}}
\]
denote the Hamming distance on $\Theta$. Then
\[
\max_{\theta\in\Theta}\E_\theta \mathsf{d}\bigl(T,\psi(\theta)\bigr)
\ge
\min_{H(\theta,\theta')\ge 1}
\frac{\mathsf{d}\bigl(\psi(\theta),\psi(\theta')\bigr)}{H(\theta,\theta')}
\cdot
\frac{m}{4}
\cdot
\min_{H(\theta,\theta')=1}\bigg(1-\sqrt{\frac{\dkl(P_{\theta}\|P_{\theta'})}{2}}\bigg),
\]
where $\dkl(P\|Q)$ denotes the Kullback-Leibler divergence between two probability measures $P$ and $Q$. 
\end{lemma}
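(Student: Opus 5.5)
The plan is the standard reduction from estimation to a family of binary tests. Write
\[
\alpha:=\min_{H(\theta,\theta')\ge 1}\frac{\mathsf d(\psi(\theta),\psi(\theta'))}{H(\theta,\theta')}.
\]
Given any estimator $T$, define $\hat\theta\in\Theta$ as a minimizer of $\theta\mapsto \mathsf d(T,\psi(\theta))$. Since $\Theta$ is finite, the minimum exists, and ties can be broken measurably by a fixed ordering. The triangle inequality and minimality give, for every $\theta$,
\[
\mathsf d(\psi(\hat\theta),\psi(\theta))\le \mathsf d(\psi(\hat\theta),T)+\mathsf d(T,\psi(\theta))\le 2\,\mathsf d(T,\psi(\theta)).
\]
By the definition of $\alpha$, $\mathsf d(\psi(\hat\theta),\psi(\theta))\ge \alpha H(\hat\theta,\theta)$; this is trivial when $\hat\theta=\theta$. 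Together these give $\E_\theta\,\mathsf d(T,\psi(\theta))\ge \frac{\alpha}{2}\E_\theta H(\hat\theta,\theta)$. It therefore suffices to lower bound the expected Hamming error of an arbitrary $\Theta$-valued estimator.

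Next, bound the maximum over $\theta$ below by the average under the uniform prior on $\{0,1\}^m$. Expand $H(\hat\theta,\theta)=\sum_{j=1}^m \mathbf 1_{\{\hat\theta_j\neq\theta_j\}}$. For each coordinate $j$, pair each $\theta$ with $\theta^{(j)}$, the vector obtained by flipping its $j$-th entry; these pairs partition $\Theta$ into $2^{m-1}$ pairs. For a pair $(\theta,\theta^{(j)})$, the test $\hat\theta_j$ distinguishes $P_\theta$ from $P_{\theta^{(j)}}$. The usual two-point bound gives
\[
P_\theta(\hat\theta_j\neq\theta_j)+P_{\theta^{(j)}}(\hat\theta_j\neq\theta^{(j)}_j)\ge 1-\mathrm{TV}(P_\theta,P_{\theta^{(j)}}),
\]
because the two error events are complementary. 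Summing over pairs and coordinates, and dividing by $2^m$, yields
\[
2^{-m}\sum_\theta \E_\theta H(\hat\theta,\theta)\ge \frac m2\Bigl(1-\max_{H(\theta,\theta')=1}\mathrm{TV}(P_\theta,P_{\theta'})\Bigr).
\]

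Finally, combine the two steps:
\[
\max_\theta \E_\theta\,\mathsf d(T,\psi(\theta))\ge \frac{\alpha}{2}\cdot\frac m2\bigl(1-\max\mathrm{TV}\bigr).
\]
Then apply Pinsker's inequality, $\mathrm{TV}(P,Q)\le\sqrt{\dkl(P\|Q)/2}$. Turning a maximum of TV distances into the stated minimum of $1-\sqrt{\dkl/2}$ is immediate, since $1-\max_{H=1}\mathrm{TV}\ge \min_{H=1}\bigl(1-\sqrt{\dkl(P_\theta\|P_{\theta'})/2}\bigr)$.

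There is no real obstacle. The only points needing care are measurability of the minimum-distance projection $\hat\theta$, which is settled by finiteness of $\Theta$ and a fixed tie-breaking rule, and keeping track of the factor $2$ lost in that projection together with the factor $1/2$ from averaging over pairs. These two factors produce exactly the constant $m/4$ in the statement.
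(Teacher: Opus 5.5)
Your proof is correct, and it is the standard argument behind the references the paper cites for this lemma (the paper gives no proof of its own). The minimum-distance projection onto $\Theta$ loses a factor $2$, averaging over the uniform prior and pairing along each coordinate gives $\frac m2\bigl(1-\max_{H=1}\mathrm{TV}\bigr)$, as in van der Vaart's Lemma 24.3 and Tsybakov's Theorem 2.12, and Pinsker's inequality then yields the stated KL form with the constant $m/4$.
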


We next explain how Assouad's lemma will be used in our setting. 
By the equivalent matrix formulation of the infinite-dimensional learning
problem, it suffices to prove a lower bound in the matrix model
\[
\bff_i=\bfA\bfu_i+\bfw_i,\qquad 1\le i\le N,
\]
under the loss
\[
\mathsf d(\bfA,\bfB)
:=
\|\bfD^{-t'}(\bfA-\bfB)\bfD^{-t}\|.
\]
We will construct finite families
\[
\{\bfA_\theta:\theta\in\Theta\}\subseteq \mcM(s,s',R'),
\qquad
\Theta=\{0,1\}^m,
\]
where the value of \(m\) and the support of the perturbations will be chosen
according to the relevant dyadic regime. The parameter map in Assouad's lemma is
simply
\[
\psi(\theta)=\bfA_\theta .
\]
The perturbations will be supported on thin strips inside dyadic blocks. This
choice keeps the operator-norm separation transparent while making the Kullback-Leibler (KL) divergence between neighboring hypotheses easy to control. For each \(\bfA\),
we write \(P_{\bfA}\) for the joint law of the data under the matrix model; the
next lemma gives the KL calculation used throughout.

\begin{lemma}[KL divergence in the matrix model]\label{lemma:KL}
For \(\bfA_0,\bfA_1\in\mcM(s,s',R')\), let \(P_{\bfA_0}\) and
\(P_{\bfA_1}\) denote the joint laws of
\(\{(\bfu_i,\bff_i)\}_{i=1}^N\) under the matrix model. Then
\[
\dkl(P_{\bfA_1}\|P_{\bfA_0})
\asymp
\frac{N}{2}
\|\bfD^{r_2}(\bfA_1-\bfA_0)\bfD^{-r_1}\|_{\mathrm{HS}}^2,
\]
whenever the right-hand side is finite.
\end{lemma}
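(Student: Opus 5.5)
The plan is to reduce the KL divergence of the joint laws to the conditional Gaussian regression laws, and then average over the design. Under $P_{\bfA}$, the pairs $(\bfu_i,\bff_i)$ are i.i.d. Each $\bfu_i$ has a law independent of $\bfA$, and conditionally on $\bfu_i$ we have $\bff_i\sim\mathcal N(\bfA\bfu_i,\bfC_w)$, where $\bfC_w$ is the noise covariance of Lemma~\ref{lemma:property_UAW}. The chain rule for KL divergence together with tensorization gives
\[
\dkl(P_{\bfA_1}\|P_{\bfA_0})=N\,\E_{\bfu}\,\dkl\bigl(\mathcal N(\bfA_1\bfu,\bfC_w)\,\|\,\mathcal N(\bfA_0\bfu,\bfC_w)\bigr)=\frac N2\,\E_{\bfu}\bigl\|\bfC_w^{-1/2}\bfB\bfu\bigr\|_2^2,
\qquad \bfB:=\bfA_1-\bfA_0 .
\]
Since these laws are infinite-dimensional, the Gaussian KL formula should be justified by the Cameron--Martin theorem, or by truncating to finitely many coordinates and passing to the limit via monotonicity of KL under projections. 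When the shift $\bfB\bfu$ lies outside the Cameron--Martin space with positive probability, the divergence is infinite, and this matches the clause ``whenever the right-hand side is finite''. For the thin-strip perturbations used later $\bfB$ has finite rank, so the finite-dimensional formula applies directly.

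Next I will compute the expectation. We have
\[
\E\|\bfC_w^{-1/2}\bfB\bfu\|_2^2=\tr\bigl(\bfC_w^{-1/2}\bfB\bfC_u\bfB^\top\bfC_w^{-1/2}\bigr)=\bigl\|\bfC_w^{-1/2}\bfB\bfC_u^{1/2}\bigr\|_{\mathrm{HS}}^2 .
\]
I then apply the diagonal preconditioning from Lemma~\ref{lemma:property_UAW}. From $c_-\bfD^{-2r_1}\preceq\bfC_u\preceq c_+\bfD^{-2r_1}$ and $c_-\bfD^{-2r_2}\preceq\bfC_w\preceq c_+\bfD^{-2r_2}$, operator monotonicity of inversion gives $c_+^{-1}\bfD^{2r_2}\preceq\bfC_w^{-1}\preceq c_-^{-1}\bfD^{2r_2}$.

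The key step is a two-sided sandwich argument, with one factor at a time. For the input side, $\|\bfX\bfC_u^{1/2}\|_{\mathrm{HS}}^2=\tr(\bfX\bfC_u\bfX^\top)$ is monotone in $\bfC_u$ in Loewner order. Applied with $\bfX=\bfC_w^{-1/2}\bfB$, this gives
\[
\tr(\bfX\bfC_u\bfX^\top)\asymp\tr(\bfX\bfD^{-2r_1}\bfX^\top)=\|\bfC_w^{-1/2}\bfB\bfD^{-r_1}\|_{\mathrm{HS}}^2 .
\]
For the output side, I write this quantity as $\tr(\bfY^\top\bfC_w^{-1}\bfY)$ with $\bfY=\bfB\bfD^{-r_1}$. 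It is monotone in $\bfC_w^{-1}$, so it is comparable to $\tr(\bfY^\top\bfD^{2r_2}\bfY)=\|\bfD^{r_2}\bfB\bfD^{-r_1}\|_{\mathrm{HS}}^2$, with constants depending only on $c_\pm$. Combining the two comparisons yields the claimed equivalence $\dkl\asymp\frac N2\|\bfD^{r_2}(\bfA_1-\bfA_0)\bfD^{-r_1}\|_{\mathrm{HS}}^2$.

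I expect the main obstacle to be the rigorous infinite-dimensional justification, especially when $r_2$ is small and the noise is a generalized field. The approach I would try first is to compute everything on the finite coordinate set $\Lambda_I\times\Lambda_K$ containing the support of $\bfB$, together with all input coordinates, using conditioning. Because the output coordinates outside $K$ have the same conditional law under both hypotheses only after accounting for the correlations in $\bfC_w$, I will project onto the finitely many output coordinates with nested finite sets $K_n\uparrow\Lambda$. On each $K_n$ the Gaussian KL is explicit and bounded by the same sandwich with uniform constants, because the preconditioning bounds pass to principal submatrices. The limit as $n\to\infty$ then follows from monotone convergence of KL under increasing $\sigma$-algebras.
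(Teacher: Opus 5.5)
Your proposal is correct and follows the paper's proof essentially step for step. You condition on the inputs, use the Gaussian mean-shift formula to get $\frac N2\tr\bigl(\bfC_w^{-1}(\bfA_1-\bfA_0)\bfC_u(\bfA_1-\bfA_0)^\top\bigr)$, and then compare each covariance factor with its diagonal preconditioner from Lemma~\ref{lemma:property_UAW}; the paper does this comparison via the trace inequality $\sigma_{\min}\,\tr \le \tr(\cdot\,\cdot) \le \sigma_{\max}\,\tr$ for positive semidefinite matrices, which is the same as your one-factor-at-a-time Loewner sandwich. Beyond the paper, you add a finite-coordinate truncation and monotone-limit argument (correctly noting that correlations in $\bfC_w$ prevent simply restricting to the support of $\bfA_1-\bfA_0$), which rigorizes an infinite-dimensional step the paper takes for granted.
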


\begin{proof}%[Proof of Lemma \ref{lemma:KL}]
For \(\bfA\in\mcM(s,s',R')\), write
\[
P_{\bfA}:=\mcL_{\bfA}\bigl(\{(\bfu_i,\bff_i)\}_{i=1}^N\bigr).
\]
Conditionally on \(\bfu_i\),
\[
\bff_i\mid \bfu_i\sim \mcN(\bfA\bfu_i,\bfC_w),
\]
and hence
\[
P_{\bfA}
=
\prod_{i=1}^N P_{\bfu}(\bfu_i)\,P_{\bfA}(\bff_i\mid \bfu_i).
\]
Therefore,
\begin{align*}
\dkl(P_{\bfA_1}\|P_{\bfA_0})
&=
\E_{\bfU}\Bigl[
\dkl(P^{\bfF\mid\bfU}_{\bfA_1}\|P^{\bfF\mid\bfU}_{\bfA_0})
\Bigr]  \\
&=
\frac12
\E\left[
\sum_{i=1}^N
(\bfA_1\bfu_i-\bfA_0\bfu_i)^\top
\bfC_w^{-1}
(\bfA_1\bfu_i-\bfA_0\bfu_i)
\right] \\
&=
\frac{N}{2}\tr\Bigl(
\bfC_w^{-1}(\bfA_1-\bfA_0)\bfC_u(\bfA_1-\bfA_0)^\top
\Bigr).
\end{align*}
Using the covariance equivalences in Lemma~\ref{lemma:property_UAW},
\[
\bfC_u\asymp \bfD^{-2r_1},
\qquad
\bfC_w^{-1}\asymp \bfD^{2r_2},
\]
we obtain
\[
\dkl(P_{\bfA_1}\|P_{\bfA_0})
\asymp
\frac{N}{2}
\tr\Bigl(
\bfD^{r_2}(\bfA_1-\bfA_0)
\bfD^{-2r_1}
(\bfA_1-\bfA_0)^\top
\bfD^{r_2}
\Bigr)=\frac{N}{2}
\|\bfD^{r_2}(\bfA_1-\bfA_0)\bfD^{-r_1}\|_{\mathrm{HS}}^2.
\]
In the last comparison we used the inequality
\[
\sigma_{\min}(\bfA)\tr(\bfB)\le \tr(\bfA\bfB)=\tr(\bfB\bfA)\le \sigma_{\max}(\bfA)\tr(\bfB),
\]
valid for symmetric positive semidefinite matrices \(\bfA\) and \(\bfB\).
This proves the claim.
\end{proof}

The next lemma implements the preceding Assouad strategy on a thin strip inside
one dyadic block. It simultaneously covers row-type and column-type
perturbations.

\begin{lemma}[Thin-strip Assouad construction]\label{lem:thin_strip_assouad}
Let $J_{\mathrm{out}},J_{\mathrm{in}}\in\mathbb {N}_0$, and let
\[
\mathcal{I}\subseteq \Lambda_{J_{\mathrm{out}}},
\qquad
\mathcal{J} \subseteq \Lambda_{J_{\mathrm{in}}}
\]
be finite sets satisfying
\[
\min\{|\mathcal I|,|\mathcal J|\}=1.
\]
Set
\[
m:=|\mathcal I|\,|\mathcal J|=\max\{|\mathcal I|,|\mathcal J|\}.
\]
For $\theta=(\theta_{\lambda,\lambda'})_{(\lambda,\lambda')\in \mathcal I\times\mathcal J}\in\{0,1\}^{m}$ and $(\lambda,\lambda') \in \Lambda \times \Lambda$, define
\[
(\bfA_\theta)_{\lambda,\lambda'}
=
\delta\,\theta_{\lambda,\lambda'}\,\mathbf 1_{\{(\lambda,\lambda')\in\mathcal I\times\mathcal J\}},
\]
where $\delta>0$.

Assume that
\begin{equation}\label{eq:delta_thin_strip}
\delta
\le
c_0\min\Bigl\{
2^{J_{\mathrm{out}}s'}2^{J_{\mathrm{in}}s}m^{-1/2},
\;
N^{-1/2}2^{-J_{\mathrm{out}}r_2}2^{J_{\mathrm{in}}r_1}
\Bigr\},
\end{equation}
for a sufficiently small constant $c_0>0$. Then:

\begin{enumerate}
\item For every $\theta\in\{0,1\}^m$, one has $\bfA_\theta\in\mcM(s,s',R')$.

\item For every $\theta,\theta'\in\{0,1\}^m$ with $H(\theta,\theta')=1$,
\[
\dkl(P_{\bfA_\theta}\|P_{\bfA_{\theta'}})\lesssim 1.
\]

\item For every $\theta,\theta'\in\{0,1\}^m$,
\[
\|\bfD^{-t'}(\bfA_\theta-\bfA_{\theta'})\bfD^{-t}\|
\asymp
\delta\,2^{-J_{\mathrm{out}}t'}2^{-J_{\mathrm{in}}t}\sqrt{H(\theta,\theta')}.
\]

\item Consequently,
\begin{align}
\inf_{\widehat{\bfA}}\sup_{\bfA\in\mcM(s,s',R')}
\E \, \|\bfD^{-t'}(\widehat{\bfA}-\bfA)\bfD^{-t}\|
\,\gtrsim\,
\delta\,2^{-J_{\mathrm{out}}t'}2^{-J_{\mathrm{in}}t}\sqrt{m}.
\label{eq:thin_strip_general}
\end{align}
In particular, after optimizing over $\delta$ subject to \eqref{eq:delta_thin_strip},
{\scriptsize
\begin{align}
\inf_{\widehat{\bfA}}\sup_{\bfA\in\mcM(s,s',R')}
\E\, \|\bfD^{-t'}(\widehat{\bfA}-\bfA)\bfD^{-t}\|
\,\gtrsim\,
\min\Bigl\{
N^{-1/2}2^{-J_{\mathrm{out}}(r_2+t')}2^{J_{\mathrm{in}}(r_1-t)}\sqrt{m},
\;
2^{-J_{\mathrm{out}}(t'-s')}2^{-J_{\mathrm{in}}(t-s)}
\Bigr\}.
\label{eq:thin_strip_rate}
\end{align}
}
\end{enumerate}
\end{lemma}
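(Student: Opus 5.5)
The plan is to verify the four items in order. The key structural fact is that every perturbation $\bfA_\theta-\bfA_{\theta'}$ lives in a single dyadic block and is a rank-one thin strip. Because the strip has $\min\{|\mathcal I|,|\mathcal J|\}=1$, it is either a single row segment ($|\mathcal I|=1$) or a single column segment ($|\mathcal J|=1$). In both cases $\bfA_\theta-\bfA_{\theta'}=\delta\, e\, v^\top$ or $\delta\, v\, e^\top$, where $e$ is a coordinate vector and $v$ is a vector with entries in $\{-1,0,1\}$ supported on the strip. Its operator norm therefore equals its Hilbert--Schmidt norm, namely $\delta\|v\|_2$. Moreover $\|v\|_2^2$ equals the number of nonzero entries, which is $H(\theta,\theta')$ for differences and at most $m$ for $\bfA_\theta$ itself. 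Since all entries lie in $\Lambda_{J_{\mathrm{out}}}\times\Lambda_{J_{\mathrm{in}}}$, the diagonal weights $\bfD^{a}$ act on the strip as scalars: $2^{aJ_{\mathrm{out}}}$ on the left and $2^{aJ_{\mathrm{in}}}$ on the right.

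\emph{Item 1.} We compute $\|\bfD^{-s'}\bfA_\theta\bfD^{-s}\|=2^{-J_{\mathrm{out}}s'-J_{\mathrm{in}}s}\delta\|\theta\|_2\le 2^{-J_{\mathrm{out}}s'-J_{\mathrm{in}}s}\delta\sqrt m$. By the first constraint in \eqref{eq:delta_thin_strip} this is at most $c_0\le R'$.

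\emph{Item 2.} For neighbours $H(\theta,\theta')=1$, the difference has a single nonzero entry $\delta$. Lemma~\ref{lemma:KL} then gives $\dkl\asymp N\delta^2 2^{2J_{\mathrm{out}}r_2}2^{-2J_{\mathrm{in}}r_1}$, which is at most $c_0^2$ by the second constraint. Taking $c_0$ small makes this $\le 1/2$, so that $1-\sqrt{\dkl/2}\ge 1/2$.

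\emph{Item 3.} This is the identity $\|\bfD^{-t'}(\bfA_\theta-\bfA_{\theta'})\bfD^{-t}\|=\delta 2^{-J_{\mathrm{out}}t'-J_{\mathrm{in}}t}\sqrt{H(\theta,\theta')}$. It is an exact equality rather than merely $\asymp$, by the rank-one observation above.

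\emph{Item 4.} We apply Lemma~\ref{lemma:assouad} with $\psi(\theta)=\bfA_\theta$ and $\mathsf d$ the weighted norm. Here lies the one subtlety, and the step I expect to need care. Assouad's lemma needs a lower bound on $\mathsf d/H$, while item 3 gives $\mathsf d/H\asymp \delta 2^{-J_{\mathrm{out}}t'-J_{\mathrm{in}}t}/\sqrt{H}$, which degrades with $H$. The plain form of the lemma would thus only yield $\delta 2^{-\cdots}\sqrt m\cdot(\sqrt m/4)/\sqrt m$ in the worst case, losing a factor $\sqrt m$.

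To repair this I would work with the squared loss and a Jensen step. The squared distance $\mathsf d^2=\delta^2 2^{-2J_{\mathrm{out}}t'-2J_{\mathrm{in}}t}H(\theta,\theta')$ is exactly additive in the Hamming distance, so Assouad applies cleanly to $\mathsf d^2$. It gives $\max_\theta\E_\theta\mathsf d^2(T,\bfA_\theta)\gtrsim m\,\delta^2 2^{-2J_{\mathrm{out}}t'-2J_{\mathrm{in}}t}$.

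There is one caveat: $\mathsf d^2$ is not a metric, so I would use the standard version of Assouad for losses satisfying $\mathsf d^2(\psi(\theta),\psi(\theta'))\le 2(\mathsf d^2(T,\psi(\theta))+\mathsf d^2(T,\psi(\theta')))$. This version only uses a quasi-triangle inequality. Alternatively, one can reduce to projecting $T$ onto the strip coordinates, where the loss is a (weighted) Euclidean norm, and run the coordinatewise Le Cam argument directly.

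To return to the first moment, I would avoid a second-to-first moment conversion and argue coordinatewise. On the strip the loss equals $\delta 2^{-\cdots}\big(\sum_k |\hat\theta_k-\theta_k|^2\big)^{1/2}$ after projection, which dominates $\delta 2^{-\cdots} m^{-1/2}\sum_k|\hat\theta_k-\theta_k|$ by Cauchy--Schwarz; here $\hat\theta_k$ are the rescaled strip entries of $T$. The sum $\sum_k|\hat\theta_k-\theta_k|$ is additive, and two-point testing per coordinate gives expected value $\gtrsim m$. This yields $\delta 2^{-J_{\mathrm{out}}t'-J_{\mathrm{in}}t}\sqrt m$, proving \eqref{eq:thin_strip_general}.

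Finally, taking $\delta$ equal to the right-hand side of \eqref{eq:delta_thin_strip} and multiplying by $2^{-J_{\mathrm{out}}t'-J_{\mathrm{in}}t}\sqrt m$ gives the two terms of \eqref{eq:thin_strip_rate}. The $m^{-1/2}$ in the first constraint cancels $\sqrt m$, producing $2^{-J_{\mathrm{out}}(t'-s')-J_{\mathrm{in}}(t-s)}$.
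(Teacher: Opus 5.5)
Your items 1--3 and the final optimization over $\delta$ are the same as the paper's Steps 1--3. Your observations that item 3 is an exact equality, and your explicit choice of $c_0$ so that $\dkl\le 1/2$, are harmless refinements.

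\textbf{The claimed obstacle in item 4 does not exist.} Set $w:=2^{-J_{\mathrm{out}}t'-J_{\mathrm{in}}t}$, so that $\mathsf d(\bfA_\theta,\bfA_{\theta'})=\delta w\sqrt{H(\theta,\theta')}$. Then $\min_{H\ge 1}\mathsf d/H=\delta w\,m^{-1/2}$; this is exactly the paper's inequality $\sqrt H\ge m^{-1/2}H$. The factor $m/4$ in Lemma~\ref{lemma:assouad} turns this into $\delta w\sqrt m/4$. Your own expression $\delta w\sqrt m\cdot(\sqrt m/4)/\sqrt m$ also simplifies to $\delta w\sqrt m/4$. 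So nothing is lost, and the paper simply applies Lemma~\ref{lemma:assouad} as stated.

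The squared-loss detour is therefore unnecessary. Its ``Jensen step'' would also go the wrong way: Jensen gives $(\E\,\mathsf d)^2\le\E\,\mathsf d^2$, so a lower bound on the second moment gives none on the first. You rightly abandon that route.

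\textbf{The coordinatewise argument you finally adopt is correct.} Compressing the weighted error to the strip does not increase the operator norm. On a row or column strip, the compressed norm is $\delta w\|\hat\theta-\theta\|_2\ge\delta w\,m^{-1/2}\|\hat\theta-\theta\|_1$. Pair each $\theta$ with its neighbour in coordinate $k$, use $|x|+|x-1|\ge 1$ and Pinsker, and average over $\theta$. This gives $\max_\theta\E_\theta\|\hat\theta-\theta\|_1\ge\tfrac m2\bigl(1-\sqrt{\max_{H=1}\dkl/2}\bigr)$, hence $\max_\theta\E_\theta\,\mathsf d\gtrsim\delta w\sqrt m$.

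This is precisely the proof of Assouad's lemma for first-moment loss written out by hand. Your Cauchy--Schwarz step plays the role of the paper's $\sqrt H\ge m^{-1/2}H$. It handles real-valued strip entries directly, without the nearest-vertex reduction inside Assouad's proof, but it yields the same bound. You could have cited Lemma~\ref{lemma:assouad} directly.
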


\begin{proof}

We verify the four claims in turn.

\smallskip
\noindent
\paragraph{Step 1: parameter space constraint.}
Since one of $\mathcal I,\mathcal J$ is a singleton, the matrix $\bfA_\theta$ is either a row vector or a column vector supported inside the block $\mathcal I\times\mathcal J$. Hence
\[
\|\bfA_\theta\|\le \delta\sqrt{m}.
\]
After inserting the Sobolev weights, every active entry acquires the same factor $2^{-J_{\mathrm{out}}s'}2^{-J_{\mathrm{in}}s}$, so
\[
\|\bfD^{-s'}\bfA_\theta\bfD^{-s}\|
\lesssim
\delta\,2^{-J_{\mathrm{out}}s'}2^{-J_{\mathrm{in}}s}\sqrt{m}.
\]
Thus the first condition in \eqref{eq:delta_thin_strip} guarantees that $\bfA_\theta\in\mcM(s,s',R')$.

\smallskip
\noindent
\paragraph{Step 2: KL control.}
If $H(\theta,\theta')=1$, then $\bfA_\theta-\bfA_{\theta'}$ has exactly one nonzero entry, equal to $\pm\delta$, at some
\[
(\lambda_0,\lambda_0')\in \mathcal I\times\mathcal J
\subseteq \Lambda_{J_{\mathrm{out}}}\times\Lambda_{J_{\mathrm{in}}}.
\]
By Lemma \ref{lemma:KL},
\[
\dkl(P_{\bfA_\theta}\|P_{\bfA_{\theta'}})
\asymp
\frac{N}{2}
\|\bfD^{r_2}(\bfA_\theta-\bfA_{\theta'})\bfD^{-r_1}\|_{\mathrm{HS}}^2.
\]
Since only one entry is active,
\[
\|\bfD^{r_2}(\bfA_\theta-\bfA_{\theta'})\bfD^{-r_1}\|_{\mathrm{HS}}^2
\asymp
\delta^2\,2^{2J_{\mathrm{out}}r_2}2^{-2J_{\mathrm{in}}r_1},
\]
and therefore
\[
\dkl(P_{\bfA_\theta}\|P_{\bfA_{\theta'}})
\asymp
N\delta^2\,2^{2J_{\mathrm{out}}r_2}2^{-2J_{\mathrm{in}}r_1}.
\]
Hence the second condition in \eqref{eq:delta_thin_strip} implies
\[
\max_{H(\theta,\theta')=1}\dkl(P_{\bfA_\theta}\|P_{\bfA_{\theta'}})\lesssim 1.
\]

\smallskip
\noindent
\paragraph{Step 3: separation in the loss metric.}
For arbitrary $\theta,\theta'$, the difference $\bfA_\theta-\bfA_{\theta'}$ is supported on the same strip and has exactly $H(\theta,\theta')$ nonzero entries, each equal to $\pm\delta$. Since it is again either a row vector or a column vector,
\[
\|\bfA_\theta-\bfA_{\theta'}\|
=
\delta\sqrt{H(\theta,\theta')}.
\]
Applying the weights $\bfD^{-t'}$ and $\bfD^{-t}$ gives
\[
\|\bfD^{-t'}(\bfA_\theta-\bfA_{\theta'})\bfD^{-t}\|
\asymp
\delta\,2^{-J_{\mathrm{out}}t'}2^{-J_{\mathrm{in}}t}\sqrt{H(\theta,\theta')}.
\]
This proves part (3). Since $H(\theta,\theta')\le m$,
\[
\sqrt{H(\theta,\theta')}
\ge
m^{-1/2}H(\theta,\theta'),
\]
so
\[
\frac{\|\bfD^{-t'}(\bfA_\theta-\bfA_{\theta'})\bfD^{-t}\|}{H(\theta,\theta')}
\gtrsim
\delta\,2^{-J_{\mathrm{out}}t'}2^{-J_{\mathrm{in}}t}m^{-1/2}.
\]

\smallskip
\noindent
\paragraph{Step 4: Assouad lower bound.}
Applying Lemma~\ref{lemma:assouad} with
\[
\psi(\theta)=\bfA_\theta,
\qquad
\mathsf d(\bfA,\bfB)=\|\bfD^{-t'}(\bfA-\bfB)\bfD^{-t}\|,
\]
and using Steps 2 and 3, we obtain
\[
\inf_{\widehat{\bfA}}\sup_{\bfA\in\mcM(s,s',R')}
\E \,\|\bfD^{-t'}(\widehat{\bfA}-\bfA)\bfD^{-t}\|
\gtrsim
\delta\,2^{-J_{\rm out}t'}2^{-J_{\rm in}t}\sqrt m.
\]
Substituting the largest admissible \(\delta\) in
\eqref{eq:delta_thin_strip} gives \eqref{eq:thin_strip_rate}.
\end{proof}

We now prove Proposition~\ref{prop:lower_bound} by applying the thin-strip
lemma in three dyadic regimes.

\begin{proof}[Proof of Proposition \ref{prop:lower_bound}]
As discussed above, it suffices to prove the
corresponding lower bound in the matrix model under the loss
\[
\mathsf d(\bfA,\bfB)
=
\|\bfD^{-t'}(\bfA-\bfB)\bfD^{-t}\|.
\]
We apply Lemma~\ref{lem:thin_strip_assouad} in three dyadic
configurations.

\paragraph{Case 1: coarse-to-coarse.}
Take
\[
J_{\mathrm{out}}=0,\qquad J_{\mathrm{in}}=0,\qquad m=1.
\]
Then \eqref{eq:thin_strip_rate} gives
\[
\inf_{\widehat{\bfA}}\sup_{\bfA\in\mcM(s,s',R')}
\E \|\bfD^{-t'}(\widehat{\bfA}-\bfA)\bfD^{-t}\| 
\,\gtrsim\,
\min\{N^{-1/2},1\}
\asymp
N^{-1/2}.
\]

\paragraph{Case 2: fine-to-coarse.}
Fix $J_1\in\mathbb N$ and choose a thin strip inside the block $(0,J_1)$. Then
\[
J_{\mathrm{out}}=0,\qquad J_{\mathrm{in}}=J_1,\qquad m\asymp 2^{J_1d}.
\]
Hence
\[
\inf_{\widehat{\bfA}}\sup_{\bfA\in\mcM(s,s',R')}
\E \,\|\bfD^{-t'}(\widehat{\bfA}-\bfA)\bfD^{-t}\| \,
\gtrsim\,
\min\Bigl\{
N^{-1/2}2^{J_1(r_1-t+d/2)},
\,
2^{-J_1(t-s)}
\Bigr\}.
\]
Since $r_1-s+d/2>0$, choose
\[
J_1=
\left\lceil
\frac{\log_2 N}{2(r_1-s+d/2)}
\right\rceil.
\]
Then the two terms are balanced, and
\[
N^{-1/2}2^{J_1(r_1-t+d/2)}
\asymp
2^{-J_1(t-s)}
\asymp
N^{-\frac{t-s}{2(r_1-s)+d}}.
\]

\paragraph{Case 3: coarse-to-fine.}
Fix $J_2\in\mathbb N$ and choose a thin strip inside the block $(J_2,0)$. Then
\[
J_{\mathrm{out}}=J_2,\qquad J_{\mathrm{in}}=0,\qquad m\asymp 2^{J_2d}.
\]
Thus
\[
\inf_{\widehat{\bfA}}\sup_{\bfA\in\mcM(s,s',R')}
\E \, \|\bfD^{-t'}(\widehat{\bfA}-\bfA)\bfD^{-t}\| 
\,\gtrsim\,
\min\Bigl\{
N^{-1/2}2^{J_2(-r_2-t'+d/2)},
\,
2^{-J_2(t'-s')}
\Bigr\}.
\]
If $-r_2-s'+d/2>0$, choose
\[
J_2=
\left\lceil
\frac{\log_2 N}{2(-r_2-s'+d/2)}
\right\rceil,
\]
which yields
\[
N^{-1/2}2^{J_2(-r_2-t'+d/2)}
\asymp
2^{-J_2(t'-s')}
\asymp
N^{-\frac{t'-s'}{2(-r_2-s')+d}}.
\]

Collecting the three cases, we conclude
\begin{align*}
\inf_{\widehat{\bfA}}\sup_{\bfA\in\mcM(s,s',R')}
\E \, \|\bfD^{-t'}(\widehat{\bfA}-\bfA)\bfD^{-t}\|
\, \gtrsim \,
\max\bigg\{
N^{-1/2},
\;
N^{-\frac{t-s}{2(r_1-s)+d}},
\;
N^{-\frac{t'-s'}{2(-r_2-s')+d}}
\bigg\},
\end{align*}
where the third term is included only when the corresponding denominator $2(-r_2-s')+d$ is positive. Equivalently,
\begin{align*}
\inf_{\widehat{\bfA}}\sup_{\bfA\in\mcM(s,s',R')}
\E \, \|\bfD^{-t'}(\widehat{\bfA}-\bfA)\bfD^{-t}\| \,
\gtrsim\,
N^{-\min\bigl\{
\frac12,\,
\frac{t-s}{2(r_1-s)+d},\,
\frac{t'-s'}{(2(-r_2-s')+d)_{+}}
\bigr\}}.
\end{align*}
This proves the proposition.
\end{proof}

\section{Computational cost}\label{sec:computation}

Proposition~\ref{prop:upper_bound} shows that the estimator constructed in
Subsection~\ref{subsec:minimax_upper} achieves the minimax rate
\(N^{-\gamma}\), up to logarithmic factors. We now study the computational
cost required to attain this statistical accuracy. To express the cost as a
function of a target error, we write
\[
\varepsilon=N^{-\gamma}.
\]
Throughout this section, \(\Cost(\varepsilon)\) denotes the number of arithmetic
operations required to construct the estimator from the finite coefficient data
used in Subsection~\ref{subsec:minimax_upper}, when the columnwise regressions
are solved by dense direct least-squares methods. We suppress polylogarithmic
factors in \(\varepsilon^{-1}\) and \(\delta^{-1}\).

This section has two parts. In Subsection~\ref{subsec:cost}, we derive the
cost exponent for the scale-adaptive estimator. In
Subsection~\ref{subsec:cost_optimality}, we explain why the two components of
this exponent are unavoidable within the present blockwise
dense-regression framework.

\subsection{Cost of the scale-adaptive estimator}\label{subsec:cost}

The following proposition gives the computational cost of the estimator in
\eqref{eq:estimator_operator_box}.

\begin{proposition}[Cost of the scale-adaptive estimator]\label{prop:cost}
Under the setting of Theorem~\ref{thm:main1}, fix a target accuracy
\( \varepsilon \in (0,1) \), and choose \( N \) so that
\begin{align}\label{eq:notation_transform}
\varepsilon = N^{-\gamma}.
\end{align}
Then the estimator in \eqref{eq:estimator_operator_box}, implemented by
columnwise dense least squares, satisfies
\[
\Cost(\varepsilon)
\lesssim_{\log}
\varepsilon^{-\kappa},
\]
where
\[
\kappa
=
\max\left\{
\kappa_{\rm in},
\kappa_{\rm out}
\right\},
\]
with
\[
\kappa_{\rm in}
:=
2
+
\frac{2d}{\min\{r_1,t\}-s}
+
\frac{\bigl(2(r_1-t)+d\bigr)_{+}}{t-s},
\]
and
\[
\kappa_{\rm out}
:=
\frac{d+\bigl(2(-r_2-s')+d\bigr)_{+}}{t'-s'}
=
\max\left\{
\frac{d}{t'-s'},\
\frac{2(-r_2-s')+2d}{t'-s'}
\right\}.
\]
\end{proposition}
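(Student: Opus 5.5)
The plan is to bound the arithmetic cost of each columnwise regression in Step~1 of the algorithm, sum over the $\mcO(\log N)$ column scales $j'$, and identify which column dominates. Write $L:=\lceil \gamma\log_2 N/(t'-s')\rceil$, $p_{j'}:=|\Lambda_{\widetilde I_{j'}}|\asymp 2^{\widetilde J(j')d}$ for the number of regressors and $q_{j'}:=|\nabla_{j'}|\asymp 2^{j'd}$ for the number of right-hand sides.

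\textbf{Step 1 (cost of one column).} A dense direct least-squares solve of \eqref{eq:estimator_column_box} consists of three operations:
\begin{itemize}
\item forming $(\bfU^{(j')}_{\widetilde I_{j'}})^\top\bfU^{(j')}_{\widetilde I_{j'}}$, at cost $N_{j'}p_{j'}^2$;
\item forming $(\bfU^{(j')}_{\widetilde I_{j'}})^\top\bfF^{(j')}_{j'}$, at cost $N_{j'}p_{j'}q_{j'}$;
\item factoring and solving, at cost $p_{j'}^3+p_{j'}^2q_{j'}$.
\end{itemize}
Since \eqref{eq:Njprime_box} forces $N_{j'}\gtrsim p_{j'}$, the cost of column $j'$ is
\[
C_{j'}\asymp N_{j'}\,p_{j'}\max\{p_{j'},q_{j'}\}.
\]
Restricting to the retained entries in \eqref{eq:estimator_operator_box} is cheaper, so it can be ignored. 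Hence $\Cost(\varepsilon)\lesssim L\max_{j'\le L}C_{j'}$, and the factor $L$ is logarithmic.

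\textbf{Step 2 (reduction to the endpoints $j'=0$ and $j'=L$).} Ignoring ceilings and $\log(1/\delta)$ terms, $\log_2 C_{j'}$ is a sum of functions that are each a maximum of affine functions of $j'$:
\begin{itemize}
\item $\widetilde J(j')$ and $J(j')$ are affine in $j'$;
\item the supremum in \eqref{eq:Njprime_box} is attained at $j=0$ or $j=J(j')$, as noted in the remark on the choice of $N_{j'}$, and the exponent $\max\{j,j'\}$ is itself piecewise affine;
\item the outer maxima in $N_{j'}$ and $\max\{p_{j'},q_{j'}\}$ are maxima of affine functions.
\end{itemize}
A sum of convex piecewise-affine functions is convex, so $j'\mapsto\log_2 C_{j'}$ is convex on $[0,L]$ and its maximum is attained at $j'=0$ or $j'=L$. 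This convexity reduction is the step I expect to be the main obstacle. One has to track every term carefully, including the $\max\{j,j'\}$ inside the supremum and the rounding in $J,\widetilde J$, and confirm that the resulting expression really is a maximum of affine pieces.

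\textbf{Step 3 (evaluation at the endpoints).} Throughout, use $N^{\gamma}=\varepsilon^{-1}$.

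At $j'=0$ we have $q_0=1$ and $p_0\asymp \varepsilon^{-d/(\min\{r_1,t\}-s)}$. Since $J(0)=\gamma\log_2N/(t-s)$, the supremum term in \eqref{eq:Njprime_box} is
\[
N^{2\gamma}\max\bigl\{1,2^{J(0)(2(r_1-t)+d)}\bigr\}=\varepsilon^{-2-(2(r_1-t)+d)_+/(t-s)}.
\]
Therefore $N_0p_0^2\lesssim \varepsilon^{-\kappa_{\rm in}}$, provided the conditioning term $p_0$ inside $N_0$ is not larger. That proviso requires $p_0^3\lesssim\varepsilon^{-\kappa_{\rm in}}$, i.e. $d/(\min\{r_1,t\}-s)\le 2+(2(r_1-t)+d)_+/(t-s)$. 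When $r_1\ge t$ this is immediate. When $r_1<t$ I would try to deduce it from the feasibility statement $p_{j'}\lesssim N_{j'}\lesssim N$ together with $\gamma\le (t-s)/(2(r_1-s)+d)$.

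At $j'=L$ we have $\widetilde J(L)=0$, so $p_L\asymp1$ and $q_L\asymp\varepsilon^{-d/(t'-s')}$. Here $J(L)=0$, so the supremum term is
\[
\varepsilon^{-2}\,2^{L(2(-r_2-t')+d)}=\varepsilon^{-(2(-r_2-s')+d)/(t'-s')}.
\]
Taking the maximum with the $\mcO(1)$ conditioning term gives $N_L\asymp\varepsilon^{-(2(-r_2-s')+d)_+/(t'-s')}$. Hence
\[
C_L\asymp N_Lq_L=\varepsilon^{-\kappa_{\rm out}}.
\]

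\textbf{Conclusion.} Combining the two endpoints with the factor $L$ and the suppressed $\log((\log N)/\delta)$ terms yields $\Cost(\varepsilon)\lesssim_{\log}\varepsilon^{-\max\{\kappa_{\rm in},\kappa_{\rm out}\}}$, which is the claim.
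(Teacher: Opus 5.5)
Your argument is essentially the paper's. Both reduce the cost to a piecewise-affine exponent evaluated at the extreme points $(j,j')=(0,0)$, $(J(0),0)$, $(0,L)$. The paper splits $\sum_{j'}N_{j'}\bigl(2^{2\widetilde J(j')d}+2^{\widetilde J(j')d+j'd}\bigr)$ into four sums and checks the vertices of the triangle in the $(j,j')$-plane. You instead bound the sum by $L\max_{j'}C_{j'}$ and use convexity in $j'$.

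The convexity step you flagged as the main obstacle is fine. Set $\phi(j,j'):=2j(r_1-t)-2j'(t'+r_2)+d\max\{j,j'\}$. This is convex in $j$, so the inner supremum equals $\max\{\phi(0,j'),\phi(J(j'),j')\}$. Since $J$ is affine in $j'$ (up to ceilings, which cost only constant factors), $j'\mapsto\phi(J(j'),j')$ is a maximum of affine functions. Your endpoint evaluations at $j'=0$ and $j'=L$ are correct.

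The one loose end is the proviso $p_0^3\lesssim\varepsilon^{-\kappa_{\rm in}}$ when $r_1<t$, and your suggested route would not close it. Feasibility only gives $p_0\lesssim N=\varepsilon^{-1/\gamma}$. Since $1/\gamma\ge 2$, this is weaker than the needed $p_0\lesssim\varepsilon^{-2}$, and an upper bound on $\gamma$ pushes in the wrong direction. The required inequality involves only the parameters:
\[
\frac{d}{r_1-s}\le 2+\frac{(2(r_1-t)+d)_+}{t-s}.
\]
In the subcase $2(r_1-t)+d\le 0$ it is exactly $r_1-s\ge d/2$. So you must invoke the standing assumption $r_1-s>d/2$, which gives $d/(r_1-s)<2$ at once. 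This is precisely how the paper absorbs the term $\varepsilon^{-3d/(\min\{r_1,t\}-s)}$ into $\varepsilon^{-\kappa_{\rm in}}$. With that substitution your proof is complete.
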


The exponent in Proposition~\ref{prop:cost} has two components. The term
\(\kappa_{\rm in}\) reflects the input-side cost of solving regressions over
many input coordinates, including the enlarged regression support needed to
control omitted-variable bias. The term \(\kappa_{\rm out}\) reflects the
output-side cost of processing many response coordinates at high output
resolutions. The overall cost exponent is the larger of these two contributions.

The heterogeneous multiscale structure of the problem also implies that the
statistical and computational bottlenecks need not coincide. The following
remark gives a concrete regime where the minimax rate is input-side dominated,
while the computational cost is output-side dominated.

\begin{remark}[Statistical and computational bottlenecks may differ]\label{remark:stats_computation}
Intuitively, a large value of \(r_1\) makes
high-frequency input directions poorly excited. This can make the input side
statistically hardest: even recovering a single output component as a function
of many input frequencies requires many samples. Computationally, however, this
is still essentially one large single-response regression. By contrast, the
output side may be statistically easier but computationally more expensive:
recovering one low-frequency input component across a high output scale amounts
to estimating many response coordinates, or equivalently many scalar
regressions, since \(|\nabla_{j'}|\asymp 2^{j'd}\).

For example, take
\[
d=1,\qquad s=s'=0,\qquad r_1=t=2,\qquad r_2=0,\qquad t'=\frac12 .
\]
Then the minimax exponent in Theorem~\ref{thm:main1} is
\[
\gamma
=
\min\left\{
\frac12,\,
\frac{2}{2\cdot 2+1},\,
\frac{1/2}{1}
\right\}
=
\frac25 ,
\]
so the statistical rate is dominated by the input-side contribution. On the
other hand, Proposition~\ref{prop:cost} gives
\[
\kappa_{\rm in}
=
2+\frac{2}{2}+\frac{1}{2}
=
\frac72,
\qquad
\kappa_{\rm out}
=
\frac{1+1}{1/2}
=
4.
\]
Therefore the arithmetic cost is dominated by the output-side exponent, even
though the minimax rate is dominated by the input side.
\end{remark}

\begin{proof}[Proof of Proposition \ref{prop:cost}]

Recall from Definition \ref{def:estimation_region} that the minimal estimation region is
\[
j(t-s)+j'(t'-s')\le \gamma \log_2 N=\log_2(1/\varepsilon),
\]
and for $j'=0,1,\dots,\big\lceil \frac{\gamma\log_2 N}{t'-s'} \big\rceil$,
\[
J(j')
=
\left\lceil
\frac{\log_2(1/\varepsilon)-j'(t'-s')}{t-s}
\right\rceil,\qquad
\widetilde J(j')
=
\left\lceil
\frac{\log_2(1/\varepsilon)-j'(t'-s')}
{\min\{r_1,t\}-s}
\right\rceil.
\]
For the \(j'\)-th column-block regression, we use \(N_{j'}\) samples, where
\begin{align}\label{eq:Njprime_2}
N_{j'}
&\asymp
\max\left\{2^{\widetilde J(j')d}+\log((\log N)/\delta), \
N^{2\gamma}
\sup_{0\le j\le J(j')}
2^{-2jt-2j't'+2jr_1-2j'r_2+d\max\{j,j'\}}\right\}\nonumber\\
&\asymp 2^{\widetilde J(j')d}+ \log(\log (1/\varepsilon)/\delta)+
\varepsilon^{-2}
\sup_{0\le j\le J(j')}
2^{-2jt-2j't'+2jr_1-2j'r_2+d\max\{j,j'\}}.
\end{align}
Here we used the change of variables \(\varepsilon=N^{-\gamma}\), and the
equivalence between the maximum and the sum for nonnegative quantities. We solve the corresponding problem by dense least squares, computing
\[
(\widehat{\bfA}^{\top})_{\widetilde I_{j'},j'}
=
\big((\bfU^{(j')}_{\widetilde I_{j'}})^{\top}
\bfU^{(j')}_{\widetilde I_{j'}}\big)^{-1}
(\bfU^{(j')}_{\widetilde I_{j'}})^{\top}
\bfF^{(j')}_{j'}.
\]
The Gram-matrix computation and the multi-response multiplication cost
\[
\mathcal{O}\left(
N_{j'}2^{2\widetilde J(j')d}
+
N_{j'}2^{\widetilde J(j')d+j'd}
\right),
\]
with the matrix inversion cost absorbed since
\(N_{j'}\gtrsim 2^{\widetilde J(j')d}\).
Hence the total computational cost satisfies
\begin{align*}
\Cost(\varepsilon)
&\asymp
\sum_{j'=0}^{\frac{\log_2(1/\varepsilon)}{t'-s'}}
N_{j'}
\left(
2^{2\widetilde J(j')d}
+
2^{\widetilde J(j')d+j'd}
\right)\\
&\lesssim_{\log} \underbrace{\sum_{j'=0}^{\frac{\log_2(1/\varepsilon)}{t'-s'}}
2^{3\widetilde J(j')d}}_{=:\circnum{1}}
+ \underbrace{\sum_{j'=0}^{\frac{\log_2(1/\varepsilon)}{t'-s'}}
2^{2\widetilde J(j')d+j'd}}_{=:\circnum{2}}\\
&\quad + \underbrace{\varepsilon^{-2}
\sum_{j'=0}^{\frac{\log_2(1/\varepsilon)}{t'-s'}}
2^{2\widetilde J(j')d}
\sup_{0\le j\le J(j')}
2^{-2jt-2j't'+2jr_1-2j'r_2+d\max\{j,j'\}}}_{=:\circnum{3}}\\
&\quad + \underbrace{\varepsilon^{-2}
\sum_{j'=0}^{\frac{\log_2(1/\varepsilon)}{t'-s'}}
2^{\widetilde J(j')d+j'd}
\sup_{0\le j\le J(j')}
2^{-2jt-2j't'+2jr_1-2j'r_2+d\max\{j,j'\}}}_{=:\circnum{4}}
.
\end{align*}

For the first term, substituting the definition of \(\widetilde J(j')\) gives
\[
\circnum{1}=\sum_{j'=0}^{\frac{\log_2(1/\varepsilon)}{t'-s'}}
2^{
3d
\frac{\log_2(1/\varepsilon)-j'(t'-s')}
{\min\{r_1,t\}-s}
}
\lesssim
\varepsilon^{-\frac{3d}{\min\{r_1,t\}-s}}.
\]
Similarly, for the second term,
\[
\circnum{2}=\sum_{j'=0}^{\frac{\log_2(1/\varepsilon)}{t'-s'}}
2^{
2d
\frac{\log_2(1/\varepsilon)-j'(t'-s')}
{\min\{r_1,t\}-s}
+j'd
}\lesssim_{\log}  \varepsilon^{-\frac{2d}{\min\{r_1,t\}-s}}+ 
\varepsilon^{
-
\frac{d}{t'-s'}
}.
\]

For the third term,
\begin{align*}
\circnum{3}&
=\varepsilon^{-2}
\sum_{j'=0}^{\frac{\log_2(1/\varepsilon)}{t'-s'}}
2^{2\widetilde J(j')d}
\sup_{0\le j\le J(j')}
2^{-2jt-2j't'+2jr_1-2j'r_2+d\max\{j,j'\}}\\
&=\varepsilon^{-2}
\sum_{j'=0}^{\frac{\log_2(1/\varepsilon)}{t'-s'}}
\sup_{0\le j\le J(j')}
2^{2d \frac{\log_2(1/\varepsilon)-j'(t'-s')}
{\min\{r_1,t\}-s}+2j(r_1-t)-2j'(t'+r_2)+d\max\{j,j'\}}
\end{align*}
To obtain an estimate up to logarithmic factors, it suffices to maximize the exponent
\[
2d
\frac{\log_2(1/\varepsilon)-j'(t'-s')}
{\min\{r_1,t\}-s}
+
2j(r_1-t)
-2j'(t'+r_2)
+d\max\{j,j'\}
\]
over the triangular region
\[
j\ge 0,\qquad j'\ge 0,\qquad
j(t-s)+j'(t'-s')\le \log_2(1/\varepsilon).
\]
The exponent is piecewise affine, with the only change of affine form occurring
across the line \(j=j'\). Hence it suffices to check the vertices of the
triangular region together with the possible kink point on \(j=j'\). The kink
point lies on the edge
\[
j(t-s)+j'(t'-s')=\log_2(1/\varepsilon).
\]
Along this edge, the term \(d\max\{j,j'\}\) is a convex piecewise-affine
function of one variable, so its maximum is attained at one of the two endpoints
of the edge. Thus the kink point is dominated by one of the endpoints. It
therefore suffices to check the three vertices
\[
(0,0),\qquad
\left(\frac{\log_2(1/\varepsilon)}{t-s},0\right),
\qquad
\left(0,\frac{\log_2(1/\varepsilon)}{t'-s'}\right).
\]
At the vertex \((0,0)\), the exponent equals
\[
\frac{2d\log_2(1/\varepsilon)}{\min\{r_1,t\}-s}.
\]
After multiplying by \(\varepsilon^{-2}\), this gives the contribution
\[
\varepsilon^{
-\left(
2+\frac{2d}{\min\{r_1,t\}-s}
\right)
}.
\]
At the vertex $\big(\frac{\log_2(1/\varepsilon)}{t-s},0\big)$,
the exponent equals
\[
\frac{2d\log_2(1/\varepsilon)}{\min\{r_1,t\}-s}
+
\frac{\bigl(2(r_1-t)+d\bigr)\log_2(1/\varepsilon)}{t-s}.
\]
If \(2(r_1-t)+d<0\), this vertex is smaller than the previous one. Hence the
two input-side vertices together give
\[
\varepsilon^{
-\big(
2+
\frac{2d}{\min\{r_1,t\}-s}
+
\frac{(2(r_1-t)+d)_{+}}{t-s}
\big)
}.
\]
At the vertex $\big(0,\frac{\log_2(1/\varepsilon)}{t'-s'}\big)$, the exponent becomes
\[
\frac{\bigl(-2t'-2r_2+d\bigr)\log_2(1/\varepsilon)}
{t'-s'}.
\]
After multiplying by \(\varepsilon^{-2}\), this gives
\[
\varepsilon^{
-\big(
2+
\frac{-2t'-2r_2+d}{t'-s'}
\big)
}
=
\varepsilon^{
-\frac{2(-r_2-s')+d}{t'-s'}
}.
\]
Therefore,
\[
\circnum{3}\lesssim \varepsilon^{
-\big(
2+
\frac{2d}{\min\{r_1,t\}-s}
+
\frac{(2(r_1-t)+d)_{+}}{t-s}
\big)
}+\varepsilon^{
-\frac{2(-r_2-s')+d}{t'-s'}
}.
\]

It remains to estimate \(\circnum{4}\). Using the definition of
\(\widetilde J(j')\), we have
\begin{align*}
\circnum{4}=\varepsilon^{-2}
\sum_{j'=0}^{\frac{\log_2(1/\varepsilon)}{t'-s'}}\sup_{0\le j\le J(j')} 2^{d
\frac{\log_2(1/\varepsilon)-j'(t'-s')}
{\min\{r_1,t\}-s}
+j'd
+
2j(r_1-t)
-2j'(t'+r_2)
+d\max\{j,j'\}}.
\end{align*}
The exponent has the same piecewise-affine structure as in the estimate of
\(\circnum{3}\). Therefore the same vertex argument over the triangular region gives
\[
\circnum{4}\lesssim_{\log } \varepsilon^{
-\big(
2+
\frac{d}{\min\{r_1,t\}-s}
+
\frac{(2(r_1-t)+d)_{+}}{t-s}
\big)
}+\varepsilon^{
-\frac{2(-r_2-s')+2d}{t'-s'}
}.
\]

Combining the above estimates, we obtain
\begin{align}\label{eq:cost_aux1}
\Cost(\varepsilon)
&\lesssim_{\log} \circnum{1}+\circnum{2}+\circnum{3} +\circnum{4}  \nonumber \\
&\lesssim_{\log} \varepsilon^{-\frac{3d}{\min\{r_1,t\}-s}} + 
\varepsilon^{
-
\frac{d}{t'-s'}
}+\varepsilon^{
-\big(
2+
\frac{2d}{\min\{r_1,t\}-s}
+
\frac{(2(r_1-t)+d)_{+}}{t-s}
\big)
}+\varepsilon^{
-\frac{2(-r_2-s')+2d}{t'-s'}
}.
\end{align}
Since \(0<\varepsilon<1\), only the largest exponent matters. The first term
in \eqref{eq:cost_aux1} is absorbed by the third term. Indeed, this follows
from
\[
2+
\frac{2d}{\min\{r_1,t\}-s}
+
\frac{(2(r_1-t)+d)_{+}}{t-s}>\frac{3d}{\min\{r_1,t\}-s},
\]
or equivalently,
\[
2
+
\frac{(2(r_1-t)+d)_{+}}{t-s}>\frac{d}{\min\{r_1,t\}-s}.
\]
If $r_1\ge t$, then
\[
2
+
\frac{(2(r_1-t)+d)_{+}}{t-s}=2
+
\frac{2(r_1-t)+d}{t-s}>\frac{d}{t-s}=\frac{d}{\min\{r_1,t\}-s}.
\]
If $r_1<t$, then, by the assumption $r_1-s>d/2$,
\[
2
+
\frac{(2(r_1-t)+d)_{+}}{t-s} \ge 2>\frac{d}{r_1-s}=\frac{d}{\min\{r_1,t\}-s}.
\]

Consequently, after absorbing the first term into the third and combining the
two output-side exponents, \eqref{eq:cost_aux1} simplifies to
\[
\Cost(\varepsilon)
\lesssim_{\log}
\varepsilon^{-\kappa},
\]
where
\[
\kappa
=
\max\left\{
\kappa_{\rm in},
\kappa_{\rm out}
\right\},
\]
with
\[
\kappa_{\rm in}
:=
2
+
\frac{2d}{\min\{r_1,t\}-s}
+
\frac{\bigl(2(r_1-t)+d\bigr)_{+}}{t-s},
\]
and
\[
\kappa_{\rm out}
:=
\frac{d+\bigl(2(-r_2-s')+d\bigr)_{+}}{t'-s'}
=
\max\left\{
\frac{d}{t'-s'},\ 
\frac{2(-r_2-s')+2d}{t'-s'}
\right\}.
\]
This completes the proof.
\end{proof}

\subsection{Optimality within the dense-regression framework}
\label{subsec:cost_optimality}

We next explain why the two exponents in Proposition~\ref{prop:cost} are
unavoidable within the present blockwise dense-regression framework.
The input-side barrier appears already in the single-column problem for
\(\bfA^\top\), which reduces to an infinite-dimensional single-response
regression problem; see \S~\ref{subsubsec:input}. The output-side barrier
appears already in the single-row problem for \(\bfA^\top\), which reduces to
estimating many response coordinates sharing a single input coordinate; see
\S~\ref{subsubsec:output}.

\subsubsection{Input-side regression barrier}\label{subsubsec:input}
Recall that, in the transposed representation \(\bfA^\top\), the row index is
an input wavelet index and the column index is an output wavelet index. To isolate the input-side obstruction, fix a single low-frequency output index
\(\lambda'_0\in\Lambda_0=\{(j,k):j=0,k\in\nabla_0\}\), and consider the subclass of matrices satisfying
\[
(\bfA^\top)_{\lambda,\mu}=0
\qquad\text{unless } \mu=\lambda'_0 .
\]
Thus the only unknown coefficients form the infinite column vector
\[
\mathbf{x}
:=
\bigl((\bfA^\top)_{\lambda,\lambda'_0}\bigr)_{\lambda\in\Lambda}.
\]
On this subclass, the model reduces to the infinite-dimensional
single-response regression problem
\[
\bfF_{\lambda'_0}
=
\bfU\mathbf{x}+\bfW_{\lambda'_0}.
\]

For this restricted problem, since \(|\lambda'_0|=0\), the contribution to the
weighted operator norm is the weighted \(\ell^2\)-error of the column vector:
\[
\|\bfD^{-t}(\widehat{\bfA}^\top-\bfA^\top)\bfD^{-t'}\|
=
\bigg(
\sum_{\lambda\in\Lambda}
2^{-2|\lambda|t}
|\widehat \bfx_\lambda-\bfx_\lambda|^2
\bigg)^{1/2}.
\]
On the other hand, the operator-class constraint gives
\[
\bigg(
\sum_{\lambda\in\Lambda}
2^{-2|\lambda|s}
|\bfx_\lambda|^2
\bigg)^{1/2}
\lesssim 1.
\]
Thus, at input scale \(j\), an admissible vector \(\bfx_j\) may have Euclidean
size
\[
\|\bfx_j\|_2\lesssim 2^{js},
\]
and its contribution to the loss is at most
\[
2^{-jt}\|\bfx_j\|_2
\lesssim
2^{-j(t-s)}.
\]
Therefore, input scales satisfying
\[
2^{-j(t-s)}\lesssim \varepsilon
\]
can be discarded without affecting the target accuracy. The finest input scale
that must be retained satisfies
\[
J_0
\asymp
\frac{\log_2(1/\varepsilon)}{t-s}.
\]
We therefore set
\[
J_0
:=
\left\lceil
\frac{\log_2(1/\varepsilon)}{t-s}
\right\rceil,
\qquad
I_0:=\{0,1,\ldots,J_0\}.
\]

However, direct regression only on \(\Lambda_{I_0}\) is not uniformly stable
over the full operator class. The coefficients outside \(I_0\) still enter the
response through the empirical design and create omitted-variable bias. To
control this effect, one must regress over an enlarged support. Namely, set
\[
\widetilde J_0
:=
\left\lceil
\frac{\log_2(1/\varepsilon)}{\min\{r_1,t\}-s}
\right\rceil,
\qquad
\widetilde I_0:=\{0,1,\ldots,\widetilde J_0\}.
\]
Then the single-column model can be written as
\[
\bfF_{\lambda'_0}
=
\bfU_{\widetilde I_0}\mathbf{x}_{\widetilde I_0}
+
\bfU_{\widetilde I_0^c}\mathbf{x}_{\widetilde I_0^c}
+
\bfW_{\lambda'_0}.
\]
The estimator solves the least-squares problem over
\(\Lambda_{\widetilde I_0}\) and then retains only the coordinates in
\(\Lambda_{I_0}\).

The enlarged regression support has dimension
\[
m_0:=|\Lambda_{\widetilde I_0}|
\asymp
2^{\widetilde J_0d}
\asymp
\varepsilon^{-\frac{d}{\min\{r_1,t\}-s}}.
\]
Therefore, in the dense direct least-squares model, forming the Gram matrix
already costs \(N_0m_0^2\).

It remains to identify the necessary sample size \(N_0\). Since the output
scale is fixed at \(j'=0\), the blockwise stochastic error at input scale \(j\)
has size
\[
\frac{2^{-jt+jr_1+jd/2}}{\sqrt{N_0}}.
\]
To make this no larger than \(\varepsilon\) uniformly over the retained input
scales \(0\le j\le J_0\), one needs
\[
N_0
\gtrsim
\varepsilon^{-2}
\sup_{0\le j\le J_0}
2^{2j(r_1-t)+dj}
\asymp
\varepsilon^{-2}
\varepsilon^{-\frac{(2(r_1-t)+d)_+}{t-s}}.
\]
Consequently, even this single-column regression problem forces the dense
least-squares cost
\[
N_0m_0^2
\gtrsim
\varepsilon^{
-\left(
2+
\frac{2d}{\min\{r_1,t\}-s}
+
\frac{(2(r_1-t)+d)_+}{t-s}
\right)
}.
\]
This is precisely the input-side exponent \(\kappa_{\rm in}\). The term
\(\min\{r_1,t\}-s\) reflects the cost of enlarging the regression support to
control omitted-variable bias; when \(r_1\ge t\), no enlargement is needed, but
when \(r_1<t\), the nuisance coordinates between \(I_0\) and
\(\widetilde I_0\) must be included in the regression even though they are not
retained in the final estimator.

The enlargement scale in the nested-support regression is not an artifact of the upper bound. To see this,
consider the following two-block population regression example inside the
single-column model. Let \(X_0\) denote one low-frequency input coordinate and
let \(X_L\) denote one input coordinate at scale \(L\). Suppose that
\[
\Var(X_0)\asymp 1,\qquad
\Var(X_L)\asymp 2^{-2Lr_1},
\qquad
\mathsf{Cov}(X_0,X_L)\asymp 2^{-Lr_1}.
\]
Equivalently, the preconditioned variables \(X_0\) and \(2^{Lr_1}X_L\) have a
nonzero correlation of order one. This covariance structure is compatible with
the diagonal preconditioning bounds in Lemma~\ref{lemma:property_UAW}.

Now consider the noiseless single-column regression model
\[
Y=X_0\beta+X_L\theta.
\]
If one regresses \(Y\) only on \(X_0\), omitting \(X_L\), the population least
squares coefficient is
\[
\beta_{\rm pop}
=
\beta+\frac{\mathsf{Cov}(X_0,X_L)}{\Var(X_0)}\theta.
\]
Hence the omitted-variable bias is
\[
|\beta_{\rm pop}-\beta|
\asymp
2^{-Lr_1}|\theta|.
\]
The operator-class constraint allows a coefficient at input scale \(L\) of size
\[
|\theta|\asymp 2^{Ls}.
\]
Therefore the omitted-variable bias can be as large as
\[
2^{-Lr_1}2^{Ls}
=
2^{-L(r_1-s)}.
\]
Since this bias appears in a low-frequency retained coordinate, the
\(H^t\)-loss weight does not reduce it. Thus, to make the omitted-variable bias
no larger than the target accuracy \(\varepsilon\), it is necessary that
\[
2^{-L(r_1-s)}\lesssim \varepsilon,
\]
or equivalently
\[
L\gtrsim \frac{\log_2(1/\varepsilon)}{r_1-s}.
\]
In the regime \(r_1<t\), this is precisely
\[
\widetilde J_0
\asymp
\frac{\log_2(1/\varepsilon)}{\min\{r_1,t\}-s}
=
\frac{\log_2(1/\varepsilon)}{r_1-s}.
\]
Thus the enlarged regression support has the optimal order: using a smaller
cutoff would allow high-frequency nuisance coefficients, already negligible in
the final \(H^t\)-loss, to leak into low-frequency retained coordinates and
produce an error larger than \(\varepsilon\).

\subsubsection{Output-side multi-response barrier}\label{subsubsec:output}
To isolate the output-side obstruction, fix a single low-frequency input index
\(\lambda_0\in\Lambda_0\), and fix an output scale \(j'\). Consider the
subclass of matrices satisfying
\[
(\bfA^\top)_{\mu,\lambda'}=0
\qquad\text{unless } \mu=\lambda_0
\quad\text{and}\quad \lambda'\in\Lambda_{j'}.
\]
Thus the only unknown coefficients form the row vector
\[
\bfy
:=
\bigl((\bfA^\top)_{\lambda_0,\lambda'}\bigr)_{\lambda'\in\Lambda_{j'}}
\in \mathbb R^{|\Lambda_{j'}|}.
\]
On this subclass, the block-column model reduces to a one-predictor,
multi-response regression problem
\[
\bfF_{j'}
=
\bfU_{:,\lambda_0}\bfy^\top+\bfW_{j'},
\]
where
\[
\bfU_{:,\lambda_0}\in\mathbb R^{N\times 1},
\qquad
\bfy\in\mathbb R^{|\Lambda_{j'}|},
\qquad \bfF_{j'},
\bfW_{j'}\in\mathbb R^{N\times |\Lambda_{j'}|}.
\]

For this restricted problem, the contribution to the weighted operator norm is
the Euclidean error of the row vector, with the output Sobolev weight:
\[
\|\bfD^{-t}(\widehat{\bfA}^\top-\bfA^\top)\bfD^{-t'}\|
=
2^{-j't'}\|\widehat{\bfy}-\bfy\|_2 .
\]
The noise at output scale \(j'\) has coordinate size \(2^{-j'r_2}\), and
\[
|\Lambda_{j'}|=|\nabla_{j'}|\asymp 2^{j'd}.
\]
Hence, if all coordinates of \(\bfy\) are estimated using \(N_{j'}\) samples,
the least-squares error has the scale
\[
\|\widehat{\bfy}-\bfy\|_2
\asymp
\frac{2^{-j'r_2}|\Lambda_{j'}|^{1/2}}{\sqrt{N_{j'}}}
\asymp
\frac{2^{-j'r_2+j'd/2}}{\sqrt{N_{j'}}}.
\]
Therefore, to make the weighted error at this output scale at most
\(\varepsilon\), one needs
\[
2^{-j't'}
\frac{2^{-j'r_2+j'd/2}}{\sqrt{N_{j'}}}
\lesssim
\varepsilon,
\]
or equivalently
\[
N_{j'}
\gtrsim
\varepsilon^{-2}2^{-2j't'-2j'r_2+j'd}.
\]
Since processing the responses at this scale costs at least
\(N_{j'}|\Lambda_{j'}|\), the cost at the fixed output scale \(j'\) is bounded
below by
\[
N_{j'}|\Lambda_{j'}|
\gtrsim
\varepsilon^{-2}2^{-2j't'-2j'r_2+2j'd}.
\]

This lower bound cannot be improved by assigning different sample sizes to
different response coordinates. Indeed, if \(N_{\lambda'}\) samples are used
for the coordinate \(\lambda'\in\Lambda_{j'}\), then the weighted squared error
constraint gives
\[
\sum_{\lambda'\in\Lambda_{j'}}
\frac{2^{-2j't'}2^{-2j'r_2}}{N_{\lambda'}}
\lesssim
\varepsilon^2,
\]
whereas the response-processing cost is at least
\[
\sum_{\lambda'\in\Lambda_{j'}}N_{\lambda'}.
\]
By Cauchy's inequality,
\[
\bigg(\sum_{\lambda'\in\Lambda_{j'}}N_{\lambda'}\bigg)
\bigg(\sum_{\lambda'\in\Lambda_{j'}}\frac1{N_{\lambda'}}\bigg)
\ge |\Lambda_{j'}|^2.
\]
Consequently,
\[
\sum_{\lambda'\in\Lambda_{j'}}N_{\lambda'}
\gtrsim
\varepsilon^{-2}2^{-2j't'-2j'r_2}|\Lambda_{j'}|^2
\asymp
\varepsilon^{-2}2^{-2j't'-2j'r_2+2j'd}.
\]
Thus the fixed-scale multi-response cost is optimal even under coordinatewise
sample allocation.

It remains to choose the largest output scale that is statistically relevant
at accuracy \(\varepsilon\). Since the row vector is admissible with
\[
\|\bfy\|_2\lesssim 2^{j's'},
\]
its weighted size is
\[
2^{-j't'}\|\bfy\|_2
\lesssim
2^{-j'(t'-s')}.
\]
Therefore, output scales satisfying
\[
2^{-j'(t'-s')}\lesssim \varepsilon
\]
can be discarded without affecting the target accuracy, while the finest
relevant scale satisfies
\[
j'\asymp \frac{\log_2(1/\varepsilon)}{t'-s'}.
\]

At this scale, the number of output coefficients is
\[
|\Lambda_{j'}|
\asymp
2^{j'd}
\asymp
\varepsilon^{-\frac{d}{t'-s'}},
\]
so explicit recovery already has the output-resolution floor
\[
\varepsilon^{-\frac{d}{t'-s'}}.
\]
Moreover, the fixed-scale multi-response lower bound becomes
\[
\varepsilon^{-2}2^{-2j't'-2j'r_2+2j'd}
\asymp
\varepsilon^{-\frac{2(-r_2-s')+2d}{t'-s'}}.
\]
Consequently, this restricted row-vector recovery problem already forces the
two output-side barriers
\[
\frac{d}{t'-s'}
\qquad\text{and}\qquad
\frac{2(-r_2-s')+2d}{t'-s'}.
\]
This explains the output-side exponent
\[
\kappa_{\rm out}
=
\max\left\{
\frac{d}{t'-s'},
\frac{2(-r_2-s')+2d}{t'-s'}
\right\}
\]
within the blockwise dense-regression framework.

\section*{Acknowledgments}
The authors were partly funded by the NSF CAREER award DMS-2237628. The authors also thank Wenwen Li for helpful discussions on the manuscript.

\bibliographystyle{siam} 
\bibliography{references}

\newpage

\appendix

\section{Auxiliary materials}\label{app:convergencerate}

This appendix is organized as follows. Appendices~\ref{app:bias},
\ref{app:ovb}, and~\ref{app:var} prove Lemmas~\ref{lemma:Error-Bias},
\ref{lemma:Error-OVB}, and~\ref{lemma:Error-Var}, respectively, which bound
the bias, omitted-variable bias, and variance terms used in the proof of the
upper bound in Proposition~\ref{prop:upper_bound}.
Appendix~\ref{app:truncated_estimator} introduces a stability-truncated version
of the estimator constructed in Subsection~\ref{subsec:minimax_upper} and
proves the in-expectation error bound claimed in Theorem~\ref{thm:main1},
without changing the computational cost exponent in Proposition~\ref{prop:cost}.
Appendix~\ref{app:diagonal_input} shows that, when the input covariance
\(\bfC_u\) is exactly diagonal, the standard estimator without nested-support
regression, namely the estimator with \( \widetilde{J}(j')\equiv J(j')\), also
achieves the minimax rate \(N^{-\gamma}\).

\subsection{Bias}\label{app:bias}

\begin{proof}[Proof of Lemma \ref{lemma:Error-Bias}]
Since $\mcA\in \mathcal{O}(s,s',R)$, the norm equivalence implies that $\bfA\in \mcM(s,s',R')$, namely,
\[
\|\bfD^{-s'}\bfA\bfD^{-s}\|\le R'.
\]
Recall that $\bfD^s$ is the diagonal matrix whose $j$-th block is $2^{js}I$, see \eqref{eq:D}. It follows that for each $j,j'\ge 0$,
\[
\bigl\|2^{-js'} \bfA_{j,j'} 2^{-j's}\bigr\|\le R',
\]
and hence
\[
\|\bfA_{j,j'}\|\le R' 2^{js'+j's}.
\]
Therefore,
\[
\|(\bfA^\top)_{j,j'}\|=\|\bfA_{j',j}\|\le R' 2^{js+j's'}.
\]

By the block Schur test,
\begin{align*}
\| \bfD^{-t} (\bfA^{\top})_{\mcE_N^c} \bfD^{-t'} \|
&\le
\bigg(\sup_j \sum_{j':(j,j')\in \mcE_N^c}
2^{-jt-j't'}\|(\bfA^\top)_{j,j'}\|\bigg)^{1/2} \\
&\qquad \times
\bigg(\sup_{j'} \sum_{j:(j,j')\in \mcE_N^c}
2^{-jt-j't'}\|(\bfA^\top)_{j,j'}\|\bigg)^{1/2} \\
&\le
R'
\bigg(\sup_j \sum_{j':(j,j')\in \mcE_N^c}
2^{-j(t-s)-j'(t'-s')}\bigg)^{1/2}
\bigg(\sup_{j'} \sum_{j:(j,j')\in \mcE_N^c}
2^{-j(t-s)-j'(t'-s')}\bigg)^{1/2}.
\end{align*}

Recall that
\[
\mcE_N
=
\Big\{
(\lambda,\lambda')\in \Lambda\times\Lambda:
j(t-s)+j'(t'-s')\le \gamma \log_2 N
\Big\}.
\]
Fix $j\ge 0$. If
\[
j\le \frac{\gamma\log_2 N}{t-s},
\]
then $(j,j')\in \mcE_N^c$ means
\[
j'>\frac{\gamma\log_2 N-j(t-s)}{t'-s'}.
\]
Hence,
\begin{align*}
\sum_{j':(j,j')\in \mcE_N^c} 2^{-j(t-s)-j'(t'-s')}
&=
2^{-j(t-s)}
\sum_{j'>\frac{\gamma\log_2 N-j(t-s)}{t'-s'}}
2^{-j'(t'-s')} \\
&\lesssim
2^{-j(t-s)}
\,2^{-\left(\frac{\gamma\log_2 N-j(t-s)}{t'-s'}\right)(t'-s')}
=
N^{-\gamma}.
\end{align*}
If instead
\[
j> \frac{\gamma\log_2 N}{t-s},
\]
then automatically $(j,j')\in \mcE_N^c$ for all $j'\ge 0$, so
\[
\sum_{j':(j,j')\in \mcE_N^c} 2^{-j(t-s)-j'(t'-s')}
=
2^{-j(t-s)}\sum_{j'\ge 0}2^{-j'(t'-s')}
\lesssim
2^{-j(t-s)}
\le N^{-\gamma}.
\]
Thus,
\[
\sup_j \sum_{j':(j,j')\in \mcE_N^c} 2^{-j(t-s)-j'(t'-s')}
\lesssim N^{-\gamma}.
\]
By the same argument,
\[
\sup_{j'} \sum_{j:(j,j')\in \mcE_N^c} 2^{-j(t-s)-j'(t'-s')}
\lesssim N^{-\gamma}.
\]
Substituting these bounds into the Schur estimate yields
\[
\ErrorBias
=
\| \bfD^{-t} (\bfA^{\top})_{\mcE_N^c} \bfD^{-t'} \|
\lesssim
N^{-\gamma}.
\]
\end{proof}

\subsection{Omitted-variable bias}\label{app:ovb}

\begin{proof}[Proof of Lemma \ref{lemma:Error-OVB}]

Recall that \(\bfB^{\mathrm{ovb}}\) is defined block-columnwise by
\[
(\bfB^{\mathrm{ovb}})_{\widetilde I_{j'},j'}:=\big((\bfU^{(j')}_{\widetilde I_{j'}})^{\top}
\bfU^{(j')}_{\widetilde I_{j'}}\big)^{-1}
(\bfU^{(j')}_{\widetilde I_{j'}})^{\top}
\bfU^{(j')}_{\widetilde I_{j'}^c}
(\bfA^\top)_{\widetilde I_{j'}^c,j'},
\]
for all
\[
j'\le \left\lceil \frac{\gamma \log_2 N}{t'-s'} \right\rceil,
\]
where
\[
\widetilde I_{j'}=\{0,1,\ldots,\widetilde J(j')\},
\qquad
\widetilde J(j')
=
\left\lceil
\frac{\gamma\log_2 N-j'(t'-s')}
{\min\{r_1,t\}-s}
\right\rceil .
\]

We first prove the following blockwise bound: with probability at least
\(1-\delta\), uniformly over all relevant \(j'\) and all \(j\in I_{j'}\),
\begin{align}\label{eq:ovb_block_bound_completed}
\|(\bfB^{\mathrm{ovb}})_{j,j'}\|
\lesssim
R' 2^{jr_1+j's'-\widetilde J(j')(r_1-s)} .
\end{align}

Write
\[
\widehat{\Sigma}:=\frac{1}{N_{j'}} (\bfU^{(j')})^\top \bfU^{(j')},
\qquad
\widehat{\bar\Sigma}:=\bfD^{r_1}\widehat{\Sigma}\bfD^{r_1},
\]
and let
\[
\Sigma:=\E \widehat{\Sigma},
\qquad
\bar\Sigma:=\bfD^{r_1}\Sigma\bfD^{r_1}.
\]
For notational simplicity, we suppress the dependence of these matrices on
\(j'\). However, throughout the analysis of the \(j'\)-th column block,
\(\widehat{\Sigma}\) is formed using only the corresponding \(N_{j'}\) samples.

By Lemma~\ref{lemma:property_UAW}, \(\bar\Sigma\) is uniformly
well-conditioned:
\[
c_- \le \sigma_{\min}(\bar\Sigma)
\le \sigma_{\max}(\bar\Sigma)\le c_+ .
\]
Moreover,
\[
(\bfB^{\mathrm{ovb}})_{\widetilde I_{j'},j'}
=
\bfD^{r_1}
(\widehat{\bar\Sigma}_{\widetilde I_{j'},\widetilde I_{j'}})^{-1}
\widehat{\bar\Sigma}_{\widetilde I_{j'},\widetilde I_{j'}^c}
\bfD^{-r_1}
(\bfA^\top)_{\widetilde I_{j'}^c,j'} .
\]
Equivalently,
\begin{equation}\label{eq:OVBexpression}
(\bfB^{\mathrm{ovb}})_{\widetilde I_{j'},j'}
=
\bfD^{r_1}
(\widehat{\bar\Sigma}_{\widetilde I_{j'},\widetilde I_{j'}})^{-1}
\widehat{\bar\Sigma}_{\widetilde I_{j'},\widetilde I_{j'}^c}
\bfD^{-(r_1-s)}
\bfD^{-s}
(\bfA^\top)_{\widetilde I_{j'}^c,j'} .
\end{equation}

We now control the two empirical factors. First, since
\[
|\Lambda_{\widetilde I_{j'}}|
\lesssim 2^{\widetilde J(j')d}\lesssim N_{j'},
\]
standard Gaussian sample covariance concentration gives that, with probability at
least \(1-\delta/2\), 
\[
\|(\widehat{\bar\Sigma}_{\widetilde I_{j'},\widetilde I_{j'}})^{-1}\|
\lesssim 1,
\]
provided that $N_{j'}\gtrsim 2^{\widetilde{J}(j')d} +\log(1/\delta)$. After a union bound
over the \(O(\log N)\) relevant values of \(j'\), we have that with probability at
least \(1-\delta/2\),
\begin{align}\label{eq:empirical_inverse_ovb}
\sup_{j'}
\|(\widehat{\bar\Sigma}_{\widetilde I_{j'},\widetilde I_{j'}})^{-1}\|
\lesssim 1 ,
\end{align}
provided \(N_{j'}\gtrsim 2^{\widetilde{J}(j')d} +\log((\log N)/\delta)\).

Second, we claim that, with probability at least \(1-\delta/2\),
\begin{align}\label{eq:empirical_cross_ovb}
\sup_{j'} 
\left\|
\widehat{\bar\Sigma}_{\widetilde I_{j'},\widetilde I_{j'}^c}
\bfD^{-(r_1-s)}
\right\|
\lesssim
2^{-\widetilde J(j')(r_1-s)} .
\end{align}
Indeed, decompose
\[
\widehat{\bar\Sigma}_{\widetilde I_{j'},\widetilde I_{j'}^c}
\bfD^{-(r_1-s)}
=
\bar\Sigma_{\widetilde I_{j'},\widetilde I_{j'}^c}
\bfD^{-(r_1-s)}
+
(\widehat{\bar\Sigma}-\bar\Sigma)_{\widetilde I_{j'},\widetilde I_{j'}^c}
\bfD^{-(r_1-s)} .
\]
The deterministic part is bounded by
\[
\left\|
\bar\Sigma_{\widetilde I_{j'},\widetilde I_{j'}^c}
\bfD^{-(r_1-s)}
\right\|
\le
\|\bar\Sigma\|\,
\|\bfD^{-(r_1-s)}_{\widetilde I_{j'}^c}\|
\lesssim
2^{-\widetilde J(j')(r_1-s)} .
\]

We next control the stochastic part by writing it explicitly as a sample
cross-covariance. For fixed \(j'\), define the coordinate projections
\[
P_{\widetilde I_{j'}}:\ell^2(\Lambda)\to
\ell^2(\Lambda_{\widetilde I_{j'}}),
\qquad
P_{\widetilde I_{j'}^c}:\ell^2(\Lambda)\to
\ell^2(\Lambda_{\widetilde I_{j'}^c}).
\]
Let
\[
X_i:=P_{\widetilde I_{j'}}\bfD^{r_1}\bfu_i,
\qquad
Y_i:=\bfD^{-(r_1-s)}
P_{\widetilde I_{j'}^c}\bfD^{r_1}\bfu_i .
\]
Then \(X_i\) and \(Y_i\) are centered jointly Gaussian random variables taking
values in
\(\ell^2(\Lambda_{\widetilde I_{j'}})\) and
\(\ell^2(\Lambda_{\widetilde I_{j'}^c})\), respectively. Moreover,
\begin{align*}
\frac{1}{N_{j'}}\sum_{i=1}^{N_{j'}} X_i Y^{\top}_i
&=
\frac{1}{N_{j'}}\sum_{i=1}^{N_{j'}}
\big(P_{\widetilde I_{j'}}\bfD^{r_1}\bfu_i\big)
\big(
\bfD^{-(r_1-s)}
P_{\widetilde I_{j'}^c}\bfD^{r_1}\bfu_i\big)^{\top} \\
&=
P_{\widetilde I_{j'}}
\bigg(
\frac{1}{N_{j'}}\sum_{i=1}^{N_{j'}}
(\bfD^{r_1}\bfu_i) (\bfD^{r_1}\bfu_i)^{\top}
\bigg)
P_{\widetilde I_{j'}^c}\bfD^{-(r_1-s)} \\
&=
\widehat{\bar\Sigma}_{\widetilde I_{j'},\widetilde I_{j'}^c}
\bfD^{-(r_1-s)} .
\end{align*}
Similarly,
\[
\E X Y^{\top}
=
\bar\Sigma_{\widetilde I_{j'},\widetilde I_{j'}^c}
\bfD^{-(r_1-s)} .
\]
Therefore,
\[
\frac{1}{N_{j'}}\sum_{i=1}^{N_{j'}} X_i Y^{\top}_i-\E X Y^{\top}
=
(\widehat{\bar\Sigma}-\bar\Sigma)_{\widetilde I_{j'},\widetilde I_{j'}^c}
\bfD^{-(r_1-s)} .
\]
We now estimate the covariance quantities in
Lemma~\ref{lemma:sample-cross-covariance}. Since
\(\bar\Sigma=\bfD^{r_1}\Sigma\bfD^{r_1}\) is uniformly bounded and bounded
below on \(\ell^2(\Lambda)\), the covariance operator of \(X\) is
\[
\Sigma_X
=
P_{\widetilde I_{j'}}\bar\Sigma P_{\widetilde I_{j'}} .
\]
Thus
\[
\|\Sigma_X\|\lesssim 1,
\qquad
\tr(\Sigma_X)
\lesssim
\operatorname{rank}(P_{\widetilde I_{j'}})
\lesssim
2^{\widetilde J(j')d}.
\]

Similarly, the covariance operator of \(Y\) is
\[
\Sigma_Y
=
\bfD^{-(r_1-s)}
P_{\widetilde I_{j'}^c}
\bar\Sigma
P_{\widetilde I_{j'}^c}
\bfD^{-(r_1-s)} .
\]
Since \(\|\bar\Sigma\|\lesssim 1\),
\[
\|\Sigma_Y\|
\lesssim
\|\bfD^{-(r_1-s)}P_{\widetilde I_{j'}^c}\|^2
\lesssim
2^{-2\widetilde J(j')(r_1-s)}.
\]
Moreover, using \(r_1-s>d/2\),
\begin{align*}
\tr(\Sigma_Y) \lesssim
\tr
\Big(
\bfD^{-2(r_1-s)}
P_{\widetilde I_{j'}^c}
\Big) \lesssim
\sum_{k>\widetilde J(j')}2^{kd}2^{-2k(r_1-s)} \lesssim
2^{-2\widetilde J(j')(r_1-s)}
2^{\widetilde J(j')d}.
\end{align*}

Applying Lemma~\ref{lemma:sample-cross-covariance} with
\(t\asymp \log((\log N)/\delta)\), and taking a union bound over the
\(O(\log N)\) possible values of \(j'\), we obtain, with probability at least
\(1-\delta\), uniformly over all relevant \(j'\),
\begin{align*}
\left\|
(\widehat{\bar\Sigma}-\bar\Sigma)_{\widetilde I_{j'},\widetilde I_{j'}^c}
\bfD^{-(r_1-s)}
\right\|
&\lesssim
2^{-\widetilde J(j')(r_1-s)}
\left(
\sqrt{\frac{2^{\widetilde J(j')d}+\log((\log N)/\delta)}{N_{j'}}}
+
\frac{2^{\widetilde J(j')d}+\log((\log N)/\delta)}{N_{j'}}
\right).
\end{align*}
In particular, since \(N_{j'}\gtrsim 2^{\widetilde J(j')d}+ \log((\log N)/\delta)\), we have
\[
\left\|
(\widehat{\bar\Sigma}-\bar\Sigma)_{\widetilde I_{j'},\widetilde I_{j'}^c}
\bfD^{-(r_1-s)}
\right\|
\lesssim
2^{-\widetilde J(j')(r_1-s)}.
\]

Finally, since
\begin{equation}\label{eq:auxiliarybound}
  \|\bfD^{-s}(\bfA^\top)_{\widetilde I_{j'}^c,j'}\|
\le
\|\bfD^{-s}\bfA^\top\bfD^{-s'}\|\,2^{j's'}
\lesssim
R'2^{j's'},  
\end{equation}
we combine the expression for $(\bfB^{\mathrm{ovb}})_{j,j'}$ in \eqref{eq:OVBexpression}  with the bounds in \eqref{eq:empirical_inverse_ovb}, 
\eqref{eq:empirical_cross_ovb} and \eqref{eq:auxiliarybound} to get, for every \(j\in \widetilde I_{j'}\),
\[
\|(\bfB^{\mathrm{ovb}})_{j,j'}\|
\lesssim
2^{jr_1}
\cdot 1
\cdot
2^{-\widetilde J(j')(r_1-s)}
\cdot
R'2^{j's'} ,
\]
which proves \eqref{eq:ovb_block_bound_completed}.

We now pass from the blockwise estimate to the weighted operator norm. Since
\((\bfB^{\mathrm{ovb}})_{\mcE_N}\) only keeps the blocks
\((j,j')\in\mcE_N\), the block Schur test gives
\begin{align*}
\ErrorOVB
&=
\|\bfD^{-t}(\bfB^{\mathrm{ovb}})_{\mcE_N}\bfD^{-t'}\| \\
&\le
\bigg(
\sup_j
\sum_{j':(j,j')\in \mcE_N}
2^{-jt-j't'}\|(\bfB^{\mathrm{ovb}})_{j,j'}\|
\bigg)^{1/2} \\
&\qquad\times
\bigg(
\sup_{j'}
\sum_{j:(j,j')\in \mcE_N}
2^{-jt-j't'}\|(\bfB^{\mathrm{ovb}})_{j,j'}\|
\bigg)^{1/2} \\
&\lesssim
R'
\bigg(
\sup_j
\sum_{j':(j,j')\in \mcE_N}
2^{-j(t-r_1)-j'(t'-s')}
2^{-\widetilde J(j')(r_1-s)}
\bigg)^{1/2} \\
&\qquad\times
\bigg(
\sup_{j'}
\sum_{j:(j,j')\in \mcE_N}
2^{-j(t-r_1)-j'(t'-s')}
2^{-\widetilde J(j')(r_1-s)}
\bigg)^{1/2}.
\end{align*}

It remains to bound the two Schur sums. We claim that, for every
\((j,j')\in\mcE_N\),
\[
2^{-jt-j't'}2^{jr_1+j's'-\widetilde J(j')(r_1-s)}
\lesssim N^{-\gamma}.
\]
Indeed, if \(r_1<t\), then
\[
\widetilde J(j')
\ge
\frac{\gamma\log_2 N-j'(t'-s')}{r_1-s},
\]
and hence
\[
2^{-jt-j't'}2^{jr_1+j's'-\widetilde J(j')(r_1-s)}
\le
N^{-\gamma}2^{-j(t-r_1)}
\le N^{-\gamma}.
\]
If \(r_1\ge t\), then
\[
\widetilde J(j')
\ge
\frac{\gamma\log_2 N-j'(t'-s')}{t-s}.
\]
Since \((j,j')\in\mcE_N\), we have
\[
j(t-s)\le \gamma\log_2 N-j'(t'-s').
\]
Therefore,
\begin{align*}
2^{-jt-j't'}2^{jr_1+j's'-\widetilde J(j')(r_1-s)} &\le
2^{j(r_1-t)-j'(t'-s')}
2^{-\frac{r_1-s}{t-s}(\gamma\log_2 N-j'(t'-s'))} \\
&\le
2^{\frac{r_1-t}{t-s}(\gamma\log_2 N-j'(t'-s'))-j'(t'-s')}
2^{-\frac{r_1-s}{t-s}(\gamma\log_2 N-j'(t'-s'))} \\
&=
N^{-\gamma}.
\end{align*}
This proves the claim.

Since the number of admissible scale blocks in each row and column of
\(\mcE_N\) is \(O(\log N)\), both Schur sums are bounded by
\(N^{-\gamma}\log N\). Consequently, for any $\delta\in(0,1)$, if for all relevant $j'$ it holds that
\[
N_{j'}\gtrsim 2^{\widetilde J(j')d}
+\log ((\log N)/\delta),
\]
 then, with probability at least $1-\delta,$ 
\[
\ErrorOVB\lesssim N^{-\gamma}\log N .
\]
\end{proof}

The proof of Lemma~\ref{lemma:Error-OVB} uses the following
dimension-free concentration inequality for sample cross-covariance matrices
\cite{chen2026cross}.
\begin{lemma}[{\cite[Theorem 2.1]{chen2026cross}}]
\label{lemma:sample-cross-covariance}

Let $(X,Y),(X_1,Y_1),\ldots,
(X_N,Y_N)$ be i.i.d. centered jointly
Gaussian random vectors. Let \(\Sigma_X\) and \(\Sigma_Y\) be the covariances of \(X\) and
\(Y\), respectively, and let
\[
r_X:=\frac{\tr(\Sigma_X)}{\|\Sigma_X\|},
\qquad
r_Y:=\frac{\tr(\Sigma_Y)}{\|\Sigma_Y\|}
\]
denote their effective ranks. Then, for every \(t\ge 1\), with probability at least \(1-e^{-t}\),
\begin{align*}
\bigg\|
\frac{1}{N}\sum_{i=1}^{N}X_i Y_i^{\top}-\E X Y^{\top}
\bigg\|
\lesssim
\bigl(\|\Sigma_X\|\|\Sigma_Y\|\bigr)^{1/2}
\bigg(
\sqrt{\frac{r_X+r_Y+t}{N}}
+
\frac{\sqrt{(r_X+t)(r_Y+t)}}{N}
\bigg).
\end{align*}
\end{lemma}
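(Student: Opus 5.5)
The plan is to reduce the statement to a bound on the supremum of a product empirical process indexed by two independent sets of linear functionals, and then control that supremum by generic chaining. Joint Gaussianity is used only to get sub-Gaussian marginals and majorizing-measure bounds. By the variational formula for the operator norm,
\[
\bigg\|\frac1N\sum_{i=1}^N X_iY_i^\top-\E XY^\top\bigg\|
=\sup_{u\in B_X,\,v\in B_Y}\bigg|\frac1N\sum_{i=1}^N\bigl(\langle X_i,u\rangle\langle Y_i,v\rangle-\E\langle X,u\rangle\langle Y,v\rangle\bigr)\bigg|,
\]
where $B_X,B_Y$ are the unit balls. Put $\mathcal F:=\{\langle\cdot,u\rangle:u\in B_X\}$ and $\mathcal G:=\{\langle\cdot,v\rangle:v\in B_Y\}$. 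The task is then to bound the quantity $\sup_{f\in\mathcal F,g\in\mathcal G}|(P_N-P)(fg)|$, a product empirical process.

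\textbf{Step 1: geometric parameters.} Each $\langle X,u\rangle$ is centered Gaussian with variance $u^\top\Sigma_Xu$. Hence the $\psi_2$-metric on $\mathcal F$ is equivalent to the canonical Gaussian metric $\|\Sigma_X^{1/2}(u-u')\|_2$. Consequently $d_{\psi_2}(\mathcal F)\asymp\|\Sigma_X\|^{1/2}$. By Talagrand's majorizing measure theorem,
\[
\gamma_2(\mathcal F,\psi_2)\asymp\E\sup_{u\in B_X}\langle \Sigma_X^{1/2}g,u\rangle=\E\|\Sigma_X^{1/2}g\|_2\le\sqrt{\tr\Sigma_X}=\sqrt{r_X\|\Sigma_X\|}.
\]
The same holds for $\mathcal G$ with $\Sigma_Y$ and $r_Y$.

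\textbf{Step 2: product chaining.} I will invoke the deviation inequality for products of two sub-Gaussian classes, in the form of Mendelson's multiplier/product bounds or Dirksen's tail version of generic chaining. That inequality reads: with probability at least $1-e^{-t}$,
\[
\sup_{f,g}|(P_N-P)(fg)|\lesssim
\frac{\gamma_2(\mathcal F)d(\mathcal G)+d(\mathcal F)\gamma_2(\mathcal G)+\sqrt t\,d(\mathcal F)d(\mathcal G)}{\sqrt N}
+\frac{(\gamma_2(\mathcal F)+\sqrt t\,d(\mathcal F))(\gamma_2(\mathcal G)+\sqrt t\,d(\mathcal G))}{N}.
\]
If I cannot cite this verbatim, I will prove it directly. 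The two chains $\pi_sf,\pi_sg$ are run simultaneously. Each increment $(P_N-P)\bigl((\pi_{s+1}f)(\pi_{s+1}g)-(\pi_sf)(\pi_sg)\bigr)$ is written as a sum of two terms of the form $(P_N-P)(\Delta f\cdot g)$. Each such term is a centered average of $\psi_1$ variables, so Bernstein's inequality applies with scale $\|\Delta f\|_{\psi_2}\|g\|_{\psi_2}$. A union bound is taken over the $2^{2^{s+1}}$ pairs at level $s$, giving deviation parameter $2^s+t$. The linear-in-$1/N$ part of Bernstein produces the product $\gamma_2\gamma_2/N$ term. Handling it so that the result factorizes as a product, rather than as the cruder $(\gamma_2(\mathcal F)+\gamma_2(\mathcal G))^2/N$, is the step I expect to be the main obstacle. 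For that term I will bound $\sup_g\frac1N\sum_i|\Delta f(X_i)|\,|g(Y_i)|$ by Cauchy--Schwarz: it is at most $(\frac1N\sum_i \Delta f(X_i)^2)^{1/2}\sup_g(\frac1N\sum_i g(Y_i)^2)^{1/2}$. The two factors are then controlled separately by the one-sided quadratic-process bound (Mendelson--Pajor--Tomczak-Jaegermann). That bound gives $\sup_g(\frac1N\sum_i g(Y_i)^2)^{1/2}\lesssim d(\mathcal G)+(\gamma_2(\mathcal G)+\sqrt t\,d(\mathcal G))/\sqrt N$, and similarly for $\mathcal F$. This Cauchy--Schwarz split is exactly what produces the factorized $1/N$ term.

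\textbf{Step 3: substitution.} Insert the parameters from Step 1 and factor out $(\|\Sigma_X\|\|\Sigma_Y\|)^{1/2}$. The $1/\sqrt N$ part becomes $\sqrt{r_X}+\sqrt{r_Y}+\sqrt t\lesssim\sqrt{r_X+r_Y+t}$. The $1/N$ part becomes $(\sqrt{r_X}+\sqrt t)(\sqrt{r_Y}+\sqrt t)\lesssim\sqrt{(r_X+t)(r_Y+t)}$. Together these give the claimed bound with probability at least $1-e^{-t}$, after adjusting constants in $t$.

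The infinite-dimensional setting used in the paper, with $Y$ valued in $\ell^2(\Lambda_{\widetilde I^c_{j'}})$, is handled by applying the argument to finite-dimensional coordinate truncations. The bound is dimension-free, so it passes to the limit by monotone convergence of the operator norm.
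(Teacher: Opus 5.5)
The paper does not prove this lemma; it imports it from \cite{chen2026cross}, so your argument has to stand on its own. Most of your reduction is sound. The identification $d_{\psi_2}(\mathcal F)\asymp\|\Sigma_X\|^{1/2}$, the majorizing-measure bound $\gamma_2(\mathcal F,\psi_2)\lesssim\sqrt{\tr(\Sigma_X)}$, the substitution in Step 3, and the truncation to infinite dimensions are all correct. So citing a product-process inequality of exactly your displayed form, with \emph{additive} dependence on $t$, would finish the proof.

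The gap is in your direct proof of Step 2, which carries the whole content of the lemma, and the mechanism you describe there does not work. Suppose Bernstein is run at every level of the chain. Its linear part then contributes $d(\mathcal G)\sum_s(2^s+t)\|\Delta_s f\|_{\psi_2}/N$. This is at least a constant times $d(\mathcal G)\gamma_1(\mathcal F,\psi_2)/N$, which is a $\gamma_1$ functional and not the product $\gamma_2(\mathcal F)\gamma_2(\mathcal G)/N$ you attribute to it. Already for isotropic $X\in\R^p$ and scalar $Y$ it is of order $p/N$, where the lemma allows $\sqrt{p/N}$ (this matters when $p\gg N$). For a trace-class $\Sigma_X$ with eigenvalues $k^{-2\alpha}$, $\frac12<\alpha<1$, it is infinite: the entropy numbers of $\Sigma_X^{1/2}B_X$ at cardinality $2^{2^s}$ are $\asymp 2^{-\alpha s}$, and $\gamma_1\ge\sup_s 2^s e_s=\infty$.

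Nor can you repair this by swapping ``that term'' for Cauchy--Schwarz increment by increment. The bound $\frac1N\sum_i|\Delta f(X_i)|\,|g(Y_i)|\le\|\Delta f\|_{L_2(P_N)}\sup_g\|g\|_{L_2(P_N)}$ discards the centering. It is therefore of order $\|\Delta f\|_{\psi_2}\,d(\mathcal G)$, with no decay in $N$. Likewise, the quadratic-process bound applied ``similarly for $\mathcal F$'' returns $d(\mathcal F)$, not $\gamma_2(\mathcal F)/\sqrt N$.

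The missing idea is Mendelson's scale split. Take $s_0$ with $2^{s_0}\asymp\max\{N,t\}$ and write $fg=\pi_{s_0}f\,\pi_{s_0}g+(f-\pi_{s_0}f)\,g+\pi_{s_0}f\,(g-\pi_{s_0}g)$.

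\emph{Coarse term.} For $\pi_{s_0}f\,\pi_{s_0}g$, chain only over $\ell\le s<s_0$, where $2^{\ell}\asymp t$. In this range $2^s\lesssim N$, so Bernstein's linear term is dominated by its sub-Gaussian term. The sum is then $\lesssim(\gamma_2(\mathcal F)d(\mathcal G)+d(\mathcal F)\gamma_2(\mathcal G))/\sqrt N$. The initial link $\pi_\ell f\,\pi_\ell g$, about $e^{Ct}$ pairs, is paid once, at cost $d(\mathcal F)d(\mathcal G)(\sqrt{t/N}+t/N)$.

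\emph{Fine remainder.} At each level $s\ge s_0$, a $\chi^2$-type tail bound at deviation $2^s\ge N$ gives $\|(\Delta_s f(X_i))_{i\le N}\|_2\lesssim 2^{s/2}\|\Delta_s f\|_{\psi_2}$ uniformly. Hence $\sup_f\|f-\pi_{s_0}f\|_{L_2(P_N)}\lesssim\gamma_2(\mathcal F)/\sqrt N$, and also $\|f-\pi_{s_0}f\|_{L_2(P)}\lesssim 2^{-s_0/2}\gamma_2(\mathcal F)$. Only at this point is Cauchy--Schwarz effective. Paired with $\sup_g\|g\|_{L_2(P_N)}\lesssim d(\mathcal G)+(\gamma_2(\mathcal G)+\sqrt t\,d(\mathcal G))/\sqrt N$, it yields $\gamma_2(\mathcal F)d(\mathcal G)/\sqrt N+\gamma_2(\mathcal F)(\gamma_2(\mathcal G)+\sqrt t\,d(\mathcal G))/N$. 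The third term is symmetric.

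So the factorized $1/N$ term comes from this fine-scale Cauchy--Schwarz, not from Bernstein.

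As an aside, in both places where the paper uses the lemma one has $r_X+r_Y+t\lesssim$ the relevant sample size, so the $1/N$ term is dominated anyway. There, Koltchinskii--Lounici applied to the rescaled joint vector $(X/\|\Sigma_X\|^{1/2},\,Y/\|\Sigma_Y\|^{1/2})$ already suffices, since its covariance has norm at most $2$ and trace $r_X+r_Y$.
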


\subsection{Variance}\label{app:var}

\begin{proof}[Proof of Lemma \ref{lemma:Error-Var}]
Recall that \(\bfB^{\mathrm{var}}\) is defined block-columnwise by
\[
(\bfB^{\mathrm{var}})_{\widetilde I_{j'},j'}:=\big((\bfU^{(j')}_{\widetilde I_{j'}})^{\top}
\bfU^{(j')}_{\widetilde I_{j'}}\big)^{-1}
(\bfU^{(j')}_{\widetilde I_{j'}})^{\top}
\bfW^{(j')}_{j'}
\]
for all
\[
j'\le
\left\lceil
\frac{\gamma\log_2 N}{t'-s'}
\right\rceil ,
\]
where
\[
\widetilde I_{j'}
=
\{0,1,\ldots,\widetilde J(j')\},
\qquad
\widetilde J(j')
=
\left\lceil
\frac{\gamma\log_2 N-j'(t'-s')}
{\min\{r_1,t\}-s}
\right\rceil .
\]
Since \((\bfB^{\mathrm{var}})_{\mcE_N}\) only retains the blocks
\((j,j')\in\mcE_N\), it suffices to consider \(j\in I_{j'}\subseteq
\widetilde I_{j'}\).

We first establish a uniform blockwise bound. For fixed \(j'\), define the
coordinate projection
\[
P_j:\ell^2(\Lambda_{\widetilde I_{j'}})\to \ell^2(\nabla_j),
\qquad j\in \widetilde I_{j'}.
\]
Then
\[
(\bfB^{\mathrm{var}})_{j,j'}
=
P_j
\big((\bfU^{(j')}_{\widetilde I_{j'}})^{\top}
\bfU^{(j')}_{\widetilde I_{j'}}\big)^{-1}
(\bfU^{(j')}_{\widetilde I_{j'}})^{\top}
\bfW^{(j')}_{j'} .
\]

Let \(\mcG_N\) be the event on which, uniformly over all relevant \(j'\),
\begin{equation}\label{eq:good-design-var}
cN_{j'}\bfD_{\widetilde I_{j'}}^{-2r_1}
\preceq
(\bfU^{(j')}_{\widetilde I_{j'}})^\top
\bfU^{(j')}_{\widetilde I_{j'}}
\preceq
CN_{j'}\bfD_{\widetilde I_{j'}}^{-2r_1}.
\end{equation}
We denote its complement by \(\mcG_N^c\).
As in the proof of Lemma~\ref{lemma:Error-OVB}, the definition of
\(\gamma\) implies
\[
|\Lambda_{\widetilde I_{j'}}|
\lesssim
2^{\widetilde J(j')d}
\lesssim N_{j'}
\]
uniformly over all relevant \(j'\). Therefore, by Gaussian sample covariance
concentration for the preconditioned design matrix and a union bound over the
\(O(\log N)\) possible values of \(j'\), we have
\[
\mathbb P(\mcG_N)\ge 1-\delta/2,
\]
provided \(N_{j'}\gtrsim 2^{\widetilde{J}(j')d}+\log((\log N)/\delta)\) for every relevant $j'$. 

On \(\mcG_N\), 
\[
\big((\bfU^{(j')}_{\widetilde I_{j'}})^{\top}
\bfU^{(j')}_{\widetilde I_{j'}}\big)^{-1}
\preceq
\frac{1}{c N_{j'}}\bfD_{\widetilde I_{j'}}^{2r_1}.
\]
Hence, using \(\|M\|^2=\|MM^\top\|\), we obtain
\begin{align*}
\left\|
P_j
\big((\bfU^{(j')}_{\widetilde I_{j'}})^{\top}
\bfU^{(j')}_{\widetilde I_{j'}}\big)^{-1}
(\bfU^{(j')}_{\widetilde I_{j'}})^{\top}
\right\|^2
&= \left\|
P_j
\big((\bfU^{(j')}_{\widetilde I_{j'}})^{\top}
\bfU^{(j')}_{\widetilde I_{j'}}\big)^{-1}
(\bfU^{(j')}_{\widetilde I_{j'}})^{\top} \bfU^{(j')}_{\widetilde I_{j'}} ((\bfU^{(j')}_{\widetilde I_{j'}})^\top\bfU^{(j')}_{\widetilde I_{j'}})^{-1} P_j^{\top}
\right\| \nonumber\\
&=
\left\|
P_j
\big((\bfU^{(j')}_{\widetilde I_{j'}})^{\top}
\bfU^{(j')}_{\widetilde I_{j'}}\big)^{-1}
P_j^\top
\right\| \lesssim
\frac{2^{2jr_1}}{N_{j'}},
\end{align*}
and similarly
\begin{align*}
\left\|
P_j
\big((\bfU^{(j')}_{\widetilde I_{j'}})^{\top}
\bfU^{(j')}_{\widetilde I_{j'}}\big)^{-1}
(\bfU^{(j')}_{\widetilde I_{j'}})^{\top}
\right\|_{\mathrm{F}}^2
=
\tr\left(
P_j
\big((\bfU^{(j')}_{\widetilde I_{j'}})^{\top}
\bfU^{(j')}_{\widetilde I_{j'}}\big)^{-1}
P_j^\top
\right) \lesssim
\frac{2^{2jr_1}|\nabla_j|}{N_{j'}}
\asymp
\frac{2^{2jr_1+jd}}{N_{j'}}.
\end{align*}
Equivalently,
\begin{equation}\label{eq:var-design-norms}
\left\|
P_j
\big((\bfU^{(j')}_{\widetilde I_{j'}})^{\top}
\bfU^{(j')}_{\widetilde I_{j'}}\big)^{-1}
(\bfU^{(j')}_{\widetilde I_{j'}})^{\top}
\right\|
\lesssim
\frac{2^{jr_1}}{\sqrt N_{j'}},
\qquad
\left\|
P_j
\big((\bfU^{(j')}_{\widetilde I_{j'}})^{\top}
\bfU^{(j')}_{\widetilde I_{j'}}\big)^{-1}
(\bfU^{(j')}_{\widetilde I_{j'}})^{\top}
\right\|_{\mathrm{F}}
\lesssim
\frac{2^{jr_1+\frac{jd}{2}}}{\sqrt N_{j'}}.
\end{equation}

We now condition on \(\bfU\). Since the input samples and the noise samples are
independent, the matrix
\[
P_j
\big((\bfU^{(j')}_{\widetilde I_{j'}})^{\top}
\bfU^{(j')}_{\widetilde I_{j'}}\big)^{-1}
(\bfU^{(j')}_{\widetilde I_{j'}})^{\top}
\]
is deterministic under this conditioning. Moreover, \(\bfW^{(j')}_{j'}\) is an
\(N_{j'}\times |\nabla_{j'}|\) Gaussian matrix with independent rows distributed as
\[
(\bfw_i)_{j'}^\top\sim \mcN(0,\Sigma_{j'}),
\qquad
\Sigma_{j'}:=(\bfC_w)_{j',j'}.
\]
By Lemma~\ref{lemma:property_UAW} \ref{diagonal_preconditioning} \ref{preconditioning_noise},
\[
\|\Sigma_{j'}\|
\lesssim
2^{-2j'r_2},
\qquad
\tr(\Sigma_{j'})
\lesssim
2^{j'd}2^{-2j'r_2}.
\]
Equivalently, we may write
\[
\bfW^{(j')}_{j'}=\bfZ^{(j')}_{j'}\Sigma_{j'}^{1/2},
\]
where \(\bfZ_{j'}\in\mathbb R^{N_{j'}\times |\nabla_{j'}|}\) has i.i.d.
\(\mcN(0,1)\) entries and is independent of \(\bfU\), and
\begin{align}\label{eq:var-noise-norms}
\|\Sigma_{j'}^{1/2}\|
\lesssim
2^{-j'r_2},
\qquad
\|\Sigma_{j'}^{1/2}\|_{\mathrm{F}}
\lesssim
2^{-j'r_2+\frac{j'd}{2}}.
\end{align}

Applying Chevet's inequality (Lemma~\ref{lemma:chevet}) conditionally on
\(\bfU\), and using \eqref{eq:var-design-norms} and \eqref{eq:var-noise-norms}, gives that, with conditional
probability at least \(1-\eta\),
\begin{align}\label{eq:var-block-before-union}
\|(\bfB^{\mathrm{var}})_{j,j'}\|
&=
\left\|
P_j
\big((\bfU^{(j')}_{\widetilde I_{j'}})^{\top}
\bfU^{(j')}_{\widetilde I_{j'}}\big)^{-1}
(\bfU^{(j')}_{\widetilde I_{j'}})^{\top}
\bfZ^{(j')}_{j'}\Sigma_{j'}^{1/2}
\right\| \nonumber\\
&\lesssim
\frac{2^{jr_1+\frac{jd}{2}}}{\sqrt{N_{j'}}}2^{-j'r_2}
+
\frac{2^{jr_1}}{\sqrt{N_{j'}}}
2^{-j'r_2+\frac{j'd}{2}}
+
\frac{2^{jr_1}}{\sqrt{N_{j'}}}2^{-j'r_2}
\sqrt{\log(1/\eta)} \nonumber\\
&\asymp
\frac{
2^{jr_1-j'r_2+\frac d2\max\{j,j'\}}
}{\sqrt{N_{j'}}}
+
\frac{2^{jr_1-j'r_2}}{\sqrt{N_{j'}}}\sqrt{\log(1/\eta)} \nonumber\\
&\lesssim \sqrt{\log(1/\eta)}
\frac{
2^{jr_1-j'r_2+\frac d2\max\{j,j'\}}
}{\sqrt{N_{j'}}}.
\end{align}

Fix a sufficiently large constant \(C_1>0\), and define the event
\(\mcH_N=\mcH_N(\bfU,\bfW)\) by
\[
\mcH_N
:=
\bigcap_{(j,j')\in\mcE_N}
\left\{
\|(\bfB^{\mathrm{var}})_{j,j'}\|
\le
C_1\sqrt{\log(1/\eta)}
\frac{
2^{jr_1-j'r_2+\frac d2\max\{j,j'\}}
}{\sqrt{N_{j'}}}
\right\}.
\]
We write \(\mcH_N^c=\mcH_N^c(\bfU,\bfW)\) for its complement. Applying a union bound over all \((j,j')\in\mcE_N\) to the conditional estimate \eqref{eq:var-block-before-union}, we obtain that, for every fixed realization of \(\bfU\) for which \(\mcG_N\) holds,
\[
\mathbb P_{\bfW}\big(\mcH_N^c(\bfU,\bfW)\big)
\le
C_0(\log N)^2\eta,
\]
where \(\mathbb P_{\bfW}\) denotes probability only over the noise variables. Hence, by integrating over the realizations of
\(\bfU\),
\[
\begin{aligned}
\mathbb{P}(\mcG_N\cap\mcH_N^c)
=
\int_{\mcG_N}
\mathbb{P}_{\bfW}\big(\mcH^c_N(\bfU,\bfW)\big)
\,d\mathbb{P}_{\bfU}(\bfU) \le
C_0(\log N)^2\eta.
\end{aligned}
\]
Choosing $\eta=\frac{\delta}{2C_0(\log N)^2}$, we obtain
\[
\mathbb{P}(\mcG_N\cap\mcH_N^c)\le \frac{\delta}{2}.
\]
Together with \(\mathbb{P}(\mcG_N^c)\le \delta/2\), this gives
\begin{align*}
\mathbb{P}(\mcG_N\cap\mcH_N)
&=
1-\mathbb{P}\big((\mcG_N\cap\mcH_N)^c\big) \\
&=
1-\mathbb{P}\big(\mcG_N^c\cup(\mcG_N\cap\mcH_N^c)\big) \\
&\ge
1-\mathbb{P}(\mcG_N^c)
-\mathbb{P}(\mcG_N\cap\mcH_N^c) \\
&\ge
1-\frac{\delta}{2}-\frac{\delta}{2}
=
1-\delta .
\end{align*}
Since \(\log(1/\eta)\lesssim \log(N/\delta)\), on the event \(\mcG_N\cap\mcH_N\), which has probability at least \(1-\delta\), uniformly over all \((j,j')\in\mcE_N\),
\begin{equation}\label{eq:var-block-bound}
\|(\bfB^{\mathrm{var}})_{j,j'}\|
\lesssim
\sqrt{\log\Big(\frac{N}{\delta}\Big)}
\frac{
2^{jr_1-j'r_2+\frac d2\max\{j,j'\}}
}{\sqrt{N_{j'}}}
\lesssim
\sqrt{\log\Big(\frac{N}{\delta}\Big)} N^{-\gamma} 2^{jt+j't'},
\end{equation}
where the last inequality follows from the definition of \(N_{j'}\) in
\eqref{eq:Njprime_box}:
\[
N_{j'}\gtrsim N^{2\gamma}\sup_{0\le j\le J(j')}
2^{-2jt-2j't'+2jr_1-2j'r_2+d\max\{j,j'\}}.
\]

We now pass from the blockwise estimate to the weighted operator norm. By the
block Schur test,
\begin{align*}
\ErrorVar
&=
\|\bfD^{-t}(\bfB^{\mathrm{var}})_{\mcE_N}\bfD^{-t'}\| \\
&\le
\bigg(
\sup_j
\sum_{j':(j,j')\in\mcE_N}
2^{-jt-j't'}
\|(\bfB^{\mathrm{var}})_{j,j'}\|
\bigg)^{1/2} \\
&\qquad\times
\bigg(
\sup_{j'}
\sum_{j:(j,j')\in\mcE_N}
2^{-jt-j't'}
\|(\bfB^{\mathrm{var}})_{j,j'}\|
\bigg)^{1/2}\\
&\lesssim
N^{-\gamma}
\sqrt{\log\!\Big(\frac{N}{\delta}\Big)}
\log N.
\end{align*}
This completes the proof.
\end{proof}

The following form of Chevet's inequality provides sharp control of the
operator norm of weighted Gaussian random matrices. The expectation bound is a
direct consequence of Chevet's inequality
\cite[Theorem 8.7.1]{vershynin2018high}, while the high-probability estimate
follows from the standard Gaussian concentration inequality for Lipschitz
functions. We omit the proof.

\begin{lemma}[{\cite[Theorem 8.7.1]{vershynin2018high}}]\label{lemma:chevet}
Let \(G\in \mathbb R^{m\times n}\) have i.i.d. \(\mcN(0,1)\) entries, and let
\(A\in\mathbb R^{p\times m}\), \(B\in\mathbb R^{n\times q}\) be deterministic
matrices. Then
\[
\E \|A G B\|
\asymp
\|A\|_{\mathrm{F}} \|B\|+\|A\|\|B\|_{\mathrm{F}}.
\]
Moreover, for every \(\delta\in(0,1)\), with probability at least
\(1-\delta\),
\[
\|A G B\|
\lesssim
\|A\|_{\mathrm{F}} \|B\|+\|A\|\|B\|_{\mathrm{F}}
+
\|A\|\|B\|\sqrt{\log(1/\delta)} .
\]
\end{lemma}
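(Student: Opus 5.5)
The statement is Chevet's inequality in the form of Lemma~\ref{lemma:chevet}. I plan to derive it from the Gaussian-process form of Chevet's inequality, then add a high-probability tail via Gaussian concentration.

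\textbf{Expectation bound.} Write
\[
\|AGB\| = \sup_{x\in T,\ y\in S}\langle Gx,y\rangle,
\qquad
T:=B(\mathbb S^{q-1})\subseteq\mathbb R^n,
\qquad
S:=A^\top(\mathbb S^{p-1})\subseteq\mathbb R^m.
\]
Chevet's inequality \cite[Theorem 8.7.1]{vershynin2018high} gives
\[
\E\sup_{x\in T,\ y\in S}\langle Gx,y\rangle \lesssim w(T)\,\mathrm{rad}(S)+w(S)\,\mathrm{rad}(T),
\]
where $w(\cdot)$ denotes the Gaussian width and $\mathrm{rad}(\cdot)$ the radius. The radii are $\mathrm{rad}(T)=\|B\|$ and $\mathrm{rad}(S)=\|A\|$. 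For the widths, $w(T)=\E\sup_{\|z\|\le1}\langle g,Bz\rangle=\E\|B^\top g\|_2\le\|B\|_{\mathrm F}$, and likewise $w(S)\le\|A\|_{\mathrm F}$. This yields the upper bound.

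For the matching lower bound, I will test against fixed unit vectors. Choosing $y$ to be a top right singular vector of $A$ gives
\[
\E\|AGB\|\ge \E\|y^\top A G B\|_2 = \|A\|\,\E\|g^\top B\|_2 \gtrsim \|A\|\,\|B\|_{\mathrm F},
\]
where the last step uses that $\E\|B^\top g\|_2\asymp\|B\|_{\mathrm F}$ by Gaussian norm equivalence (for instance Khintchine--Kahane). The symmetric choice on the other side gives $\E\|AGB\|\gtrsim\|A\|_{\mathrm F}\|B\|$, which completes the two-sided estimate.

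\textbf{Tail bound.} The map $G\mapsto\|AGB\|$ is Lipschitz with respect to the Frobenius norm, with constant $\|A\|\|B\|$, since
\[
\bigl|\|AGB\|-\|AG'B\|\bigr|\le\|A(G-G')B\|\le\|A\|\,\|G-G'\|_{\mathrm F}\,\|B\|.
\]
Gaussian concentration for Lipschitz functions therefore gives
\[
\|AGB\|\le\E\|AGB\|+\|A\|\|B\|\sqrt{2\log(1/\delta)}
\]
with probability at least $1-\delta$. Combining this with the expectation bound proves the second claim.

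The only step requiring care is the lower bound $\E\|B^\top g\|_2\gtrsim\|B\|_{\mathrm F}$. I expect it to go through via the comparison $(\E\|X\|^2)^{1/2}\lesssim\E\|X\|$ for Gaussian vectors $X$; nothing else is a real obstacle.
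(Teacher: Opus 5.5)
Your proposal is correct and follows the route the paper indicates (it omits the proof). The expectation bound comes from Chevet's inequality with $\mathrm{rad}(T)=\|B\|$ and $w(T)\le\|B\|_{\mathrm F}$ (and symmetrically for $A$), and the tail comes from Gaussian concentration for the $\|A\|\|B\|$-Lipschitz map $G\mapsto\|AGB\|$; your singular-vector argument additionally supplies the lower half of $\asymp$, which Chevet's inequality alone does not give. One small slip: for $y^\top A$ to make sense and equal $\|A\|v_1^\top$, the vector $y\in\mathbb R^p$ must be a top \emph{left} singular vector of $A$, not a right one.
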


\subsection{Truncated estimator and expectation bound}
\label{app:truncated_estimator}

In this subsection we show that a stability-truncated version of the
estimator constructed in Subsection~\ref{subsec:minimax_upper} satisfies an
in-expectation error bound. The truncation is only used to control the
contribution of rare ill-conditioned or large-data events. It does not change
the statistical rate, nor the computational cost exponent.

We fix a constant \(q>0\), to be chosen sufficiently large below, and set
\[
\delta_N:=N^{-q}.
\]
Throughout this subsection, the columnwise sample sizes \(N_{j'}\) are those
defined in \eqref{eq:Njprime_box} with \(\delta=\delta_N\). This choice only
changes \(N_{j'}\) by logarithmic factors compared with any fixed polynomial
confidence level.

Let
\[
J'_{\max}:=
\left\lceil \frac{\gamma\log_2 N}{t'-s'}\right\rceil .
\]
For each \(0\le j'\le J'_{\max}\), write
\[
m_{j'}:=|\Lambda_{\widetilde I_{j'}}|\asymp 2^{\widetilde{J}(j') d},
\qquad
n_{j'}:=|\Lambda_{j'}|=|\nabla_{j'}|\asymp 2^{j'd}.
\]
Recall that \(\bfU^{(j')}_{\widetilde I_{j'}}\) and
\(\bfF^{(j')}_{j'}\) denote the submatrices formed by selecting the same
\(N_{j'}\) sample rows. Define the Gram matrix
\[
\bfG_{j'}:=
(\bfU^{(j')}_{\widetilde I_{j'}})^\top
\bfU^{(j')}_{\widetilde I_{j'}}.
\]
For a sufficiently small fixed constant \(c_0>0\), define the stability event
\[
\mcS_{j'}:=
\left\{
\bfG_{j'}\succeq
c_0 N_{j'}\bfD_{\widetilde I_{j'}}^{-2r_1}
\right\}.
\]
We also introduce the data-size cutoff
\[
\mcB_N:=
\bigcap_{0\le j'\le J'_{\max}}
\left\{
\|\bfU^{(j')}_{\widetilde I_{j'}}\|_{\mathrm{F}}\le N^{K_0},
\quad
\|\bfF^{(j')}_{j'}\|_{\mathrm{F}}\le N^{K_0}
\right\},
\]
where \(K_0>0\) is chosen sufficiently large, depending only on the fixed
regularity parameters and on \(R\).

We now define the truncated coefficient estimator block-columnwise. For each
\(0\le j'\le J'_{\max}\), set
\[
((\widehat{\bfA}^{\top})^{\rm tr})_{\widetilde I_{j'},j'}
:=
\begin{cases}
\bfG_{j'}^{-1}
(\bfU^{(j')}_{\widetilde I_{j'}})^\top
\bfF^{(j')}_{j'},
&
\text{on } \mcS_{j'}\cap\mcB_N,
\\[0.4em]
0,
&
\text{on } (\mcS_{j'}\cap\mcB_N)^c.
\end{cases}
\]
All remaining entries of \((\widehat{\bfA}^{\top})^{\rm tr}\) are set to zero.
The operator estimator \(\widehat{\mcA}^{\rm tr}\) is obtained by the same final
restriction to the estimation region \(\mcE_N\):
\[
\widehat{\mcA}^{\rm tr}
:=
\sum_{(\lambda,\lambda')\in\mcE_N}
((\widehat{\bfA}^{\top})^{\rm tr})_{\lambda,\lambda'}\,
\varphi_{\lambda'}\otimes \varphi_{\lambda}.
\]

\begin{lemma}[In-expectation bound for the truncated estimator]
\label{lemma:truncated_expectation}
Under the setting of Theorem~\ref{thm:main1}, if \(A>0\) is chosen sufficiently
large, then
\[
\E
\|\widehat{\mcA}^{\rm tr}-\mcA\|_{H^t\to H^{-t'}}
\lesssim
N^{-\gamma}(\log N)^{3/2}.
\]
\end{lemma}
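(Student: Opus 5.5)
The plan is to split the expectation according to a good event and its complement:
\[
\E\|\widehat{\mcA}^{\rm tr}-\mcA\| = \E\big[\|\cdot\|\mathbf 1_{\mcG}\big]+\E\big[\|\cdot\|\mathbf 1_{\mcG^c}\big],
\]
with $\mcG := \mcB_N\cap\bigcap_{j'}\mcS_{j'}\cap\mcH_N\cap\{\text{the OVB cross-covariance event}\}$. Here $\mcH_N$ and the cross-covariance event are the high-probability events from the proofs of Lemmas~\ref{lemma:Error-OVB} and~\ref{lemma:Error-Var}, taken with $\delta=\delta_N=N^{-q}$. (The constant ``$A$'' in the statement is read as the constant $q$, or $K_0$, that we choose large.)

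\textbf{The good event.} Since $\mcS_{j'}$ holds for all $j'$ (with $c_0\le c$), the good design event \eqref{eq:good-design-var} holds, and the truncated estimator coincides with the untruncated one. The only requirement for this is that the lower bound in \eqref{eq:good-design-var} uses a constant $c\ge c_0$, which we arrange by taking $c_0$ small. The decomposition \eqref{eq:error_decomposition_aux1} and Lemmas~\ref{lemma:Error-Bias}, \ref{lemma:Error-OVB}, \ref{lemma:Error-Var} then give an error of order
\[
N^{-\gamma}\sqrt{\log(N/\delta_N)}\log N \asymp N^{-\gamma}(\log N)^{3/2},
\]
because $\log(N/\delta_N)=(1+q)\log N$. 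This gives the stated rate, with a constant depending on $q$.

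\textbf{Showing $\mcG^c$ is rare.} By those lemmas, the design, Chevet, and cross-covariance events fail with probability at most $\delta_N$. The data-size event $\mcB_N$ also fails rarely. The quantities $\E\|\bfU^{(j')}_{\widetilde I_{j'}}\|_{\mathrm F}^2$ and $\E\|\bfF^{(j')}_{j'}\|_{\mathrm F}^2$ are polynomial in $N$. Indeed, the dimensions are at most $\mathrm{poly}(N)$, the variances are at most $\max\{1,2^{-2j'r_2}\}\le \mathrm{poly}(N)$, and $\|\bfA\bfu\|$ restricted to finite scales is controlled by $\mathrm{poly}(N)\|\bfD^{s}\bfu\|$. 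So Gaussian tails (or Markov's inequality with high moments) give $\Prob(\mcB_N^c)\le N^{-q}$ once $K_0$ is large. Altogether $\Prob(\mcG^c)\lesssim N^{-q}$.

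\textbf{The bad event.} Here we need a deterministic polynomial bound on the error. The true part contributes $\|\mcA\|_{H^t\to H^{-t'}}\lesssim R$. For the estimator, on $\mcS_{j'}\cap\mcB_N$ we have
\[
\|\bfG_{j'}^{-1}\|\le (c_0N_{j'})^{-1}2^{2r_1\widetilde J(j')}\le\mathrm{poly}(N),
\]
and the remaining factors are bounded by $N^{2K_0}$. Summing over $O(\log N)$ columns with weights at most $1$ (after the restriction to $\mcE_N$, the weights $2^{-jt-j't'}$ are at most $\mathrm{poly}(N)$), the estimator norm is at most $N^{K_1}$ for some $K_1$ depending only on the parameters and $K_0$. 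On the complement of $\mcS_{j'}\cap\mcB_N$, that column is zero. Hence
\[
\E\big[\|\cdot\|\mathbf 1_{\mcG^c}\big]\lesssim (R+N^{K_1})N^{-q},
\]
which is at most $N^{-\gamma}$ once $q>K_1+1$. Because $\delta_N$ enters $N_{j'}$ only through $\log(\log N/\delta_N)$, the sample sizes change only logarithmically and the cost is unaffected.

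\textbf{Main obstacle.} The delicate point is the order in which constants are fixed. $K_1$ depends on $K_0$, and $K_0$ must be large enough for $\Prob(\mcB_N^c)\le N^{-q}$, which depends on $q$. I would break this circularity by using Gaussian tails for $\mcB_N$: these give $\Prob(\mcB_N^c)\le e^{-N}$ for any $K_0$ exceeding a fixed parameter-dependent threshold, which is independent of $q$. With $K_0$ fixed this way, $K_1$ is determined, and then $q$ is chosen with $q>K_1+1$.
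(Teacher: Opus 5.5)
Your proposal is correct and follows the paper's proof essentially step by step. On a good event of probability $1-O(N^{-q})$, the truncated and untruncated estimators coincide, and Lemmas~\ref{lemma:Error-Bias}--\ref{lemma:Error-Var} give $N^{-\gamma}(\log N)^{3/2}$. On the complement, the truncated error is deterministically at most $N^{K}$. Using $\Prob(\mcB_N^c)\le e^{-cN}$ with a $q$-independent $K_0$ lets you choose $q>K+1$ last, exactly as the paper does.

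One small slip: the implication runs the other way. The two-sided design event \eqref{eq:good-design-var} with $c\ge c_0$ implies $\mcS_{j'}$, not conversely. This is harmless, because the OVB and variance arguments use only the Gram lower bound, which $\mcS_{j'}$ itself supplies.
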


\begin{proof}
%[Proof of Lemma \ref{lemma:truncated_expectation}]
Let \(\mcG_N\) denote the intersection of the good events used in the proof of
Proposition~\ref{prop:upper_bound}, with confidence parameter
\(\delta_N=N^{-q}\). In particular, on \(\mcG_N\), the Gram lower bounds
\(\mcS_{j'}\) hold for every \(0\le j'\le J'_{\max}\), and the estimator
\(\widehat{\mcA}\) constructed with the sample sizes corresponding to
\(\delta_N\) satisfies
\[
\|\widehat{\mcA}-\mcA\|_{H^t\to H^{-t'}}
\le
C
N^{-\gamma}
\sqrt{\log\left(\frac{N}{\delta_N}\right)}
\log N .
\]
By Proposition~\ref{prop:upper_bound}, after enlarging constants if necessary,
\[
\Prob(\mcG_N^c)\le \delta_N=N^{-q}.
\]
Since \(\log(N/\delta_N)=(q+1)\log N\), this gives
\begin{equation}
\label{eq:truncated_good_event_bound}
\|\widehat{\mcA}-\mcA\|_{H^t\to H^{-t'}}
\le
C_q N^{-\gamma}(\log N)^{3/2}
\qquad\text{on } \mcG_N .
\end{equation}

We next control the data-size cutoff. The matrices appearing in the definition
of \(\mcB_N\) have polynomially many entries in \(N\), and their entries are
jointly Gaussian with variances bounded by a polynomial in \(N\). Therefore, by standard Gaussian tail bounds and a
union bound over \(0\le j'\le J'_{\max}=O(\log N)\), choosing \(K_0\)
sufficiently large gives
\begin{equation}
\label{eq:BN_tail}
\Prob(\mcB_N^c)
\le
e^{-cN}.
\end{equation}
On the event \(\mcG_N\cap\mcB_N\), all stability cutoffs are active and all
data-size cutoffs are satisfied. Hence
\[
\widehat{\mcA}^{\rm tr}=\widehat{\mcA}
\qquad
\text{on } \mcG_N\cap\mcB_N.
\]
Combining this identity with \eqref{eq:truncated_good_event_bound}, we obtain 
\[
\|\widehat{\mcA}^{\rm tr}-\mcA\|_{H^t\to H^{-t'}}
\le
C_q N^{-\gamma}(\log N)^{3/2}
\qquad
\text{on } \mcG_N\cap\mcB_N.
\]

It remains to bound the contribution of the complementary event. We claim that
there exists \(K>0\), depending only on the fixed parameters, such that
\begin{equation}
\label{eq:truncated_deterministic_bound}
\|\widehat{\mcA}^{\rm tr}-\mcA\|_{H^t\to H^{-t'}}
\le
N^K
\qquad
\text{deterministically}.
\end{equation}
Indeed, on \(\mcS_{j'}\cap\mcB_N\),
\[
\|((\widehat{\bfA}^{\top})^{\rm tr})_{\widetilde I_{j'},j'}\|
\le
\|\bfG_{j'}^{-1}\|\,
\|\bfU^{(j')}_{\widetilde I_{j'}}\|\,
\|\bfF^{(j')}_{j'}\|.
\]
The stability event implies
\[
\|\bfG_{j'}^{-1}\|
\le
C N_{j'}^{-1}
\|\bfD_{\widetilde I_{j'}}^{2r_1}\|
\le C 2^{2r_1\widetilde{J}(j')} \le 
C N^{K_1},
\]
for some constant $K_1>0$, because \(N_{j'}\ge 1\) and all scales in \(\widetilde I_{j'}\) are
\(O(\log N)\). On \(\mcB_N\),
\[
\|\bfU^{(j')}_{\widetilde I_{j'}}\|\le
\|\bfU^{(j')}_{\widetilde I_{j'}}\|_{\mathrm{F}}\le N^{K_0},
\qquad
\|\bfF^{(j')}_{j'}\|\le
\|\bfF^{(j')}_{j'}\|_{\mathrm{F}}\le N^{K_0}.
\]
Thus every retained block of
\((\widehat{\bfA}^{\top})^{\rm tr}\) has operator norm at most a polynomial in
\(N\). If \(\mcS_{j'}\cap\mcB_N\) fails, the corresponding block is set to
zero. Since the number of retained scale blocks and the Sobolev weights on
those blocks are polynomial in \(N\), it follows that
\[
\|\widehat{\mcA}^{\rm tr}\|_{H^t\to H^{-t'}}\le N^K.
\]
Moreover, because \(\mcA\in\mcO(s,s',R)\) and \(t>s\), \(t'>s'\),
\[
\|\mcA\|_{H^t\to H^{-t'}}\lesssim R.
\]
This proves \eqref{eq:truncated_deterministic_bound}, after increasing \(K\)
if necessary.

We now combine the preceding estimates:
\begin{align*}
\E
\|\widehat{\mcA}^{\rm tr}-\mcA\|_{H^t\to H^{-t'}}
&\le
C_q N^{-\gamma}(\log N)^{3/2}
+
N^K\Prob\big((\mcG_N\cap\mcB_N)^c\big) \\
&\le
C_q N^{-\gamma}(\log N)^{3/2}
+
N^K\bigl(N^{-q}+e^{-cN}\bigr).
\end{align*}
Choosing \(q>K+\gamma+1\), the second term is absorbed into the first, and
therefore
\[
\E
\|\widehat{\mcA}^{\rm tr}-\mcA\|_{H^t\to H^{-t'}}
\lesssim
N^{-\gamma}(\log N)^{3/2}.
\]
This completes the proof.
\end{proof}

We finally note that the stability truncation does not change the computational
cost exponent. The sample sizes \(N_{j'}\) are defined with
\(\delta_N=N^{-q}\), so the confidence term
\[
\log\left(\frac{\log N}{\delta_N}\right)
=
\log\bigl(N^q\log N\bigr)
\]
is still only logarithmic in \(N\). Therefore the estimates in
Proposition~\ref{prop:cost} are unchanged up to polylogarithmic factors.

Moreover, for each \(j'\), the Gram matrix \(\bfG_{j'}\) is already formed in
the dense least-squares computation. Checking the stability cutoff
\(\mcS_{j'}\) by Cholesky factorization or eigendecomposition costs
\(O(m_{j'}^3)\), where
\[
m_{j'}=|\Lambda_{\widetilde I_{j'}}|\asymp 2^{\widetilde J(j')d}.
\]
Since \(N_{j'}\gtrsim m_{j'}\), this is bounded by
\(O(N_{j'}m_{j'}^2)\), the same order as the Gram-matrix computation. The
data-size cutoff requires computing Frobenius norms of the matrices already
used in the regressions, with total cost
\[
\mcO\left(
\sum_{j'=0}^{J'_{\max}}
N_{j'}(m_{j'}+n_{j'})
\right),
\]
which is dominated by
\[
\mcO\left(
\sum_{j'=0}^{J'_{\max}}
N_{j'}(m_{j'}^2+m_{j'}n_{j'})
\right),
\]
the dense least-squares cost analyzed in Proposition~\ref{prop:cost}. Hence
\(\widehat{\mcA}^{\rm tr}\) has the same computational cost as
\(\widehat{\mcA}\), up to polylogarithmic factors.

\subsection{Diagonal input covariance}\label{app:diagonal_input}

In this subsection, we show that, when \(\bfC_u\) is exactly diagonal, the standard
estimator without nested-support regression, namely the estimator with
\(J(j')\equiv \widetilde J(j')\), also achieves the minimax rate
\(N^{-\gamma}\). The bias and variance analyses are unchanged; it therefore
remains only to revisit the omitted-variable bias term.

Since \(\bfC_u\) is diagonal, the population cross-covariance between disjoint
coordinate blocks vanishes. Therefore the population omitted-variable bias is zero, and only
the empirical cross-covariance fluctuation needs to be controlled.

We first prove the following blockwise bound: with probability at least
\(1-\delta\), uniformly over all \((j,j')\in\mcE_N\),
\begin{align}\label{eq:ovb_block_bound_completed_2}
\|(\bfB^{\mathrm{ovb}})_{j,j'}\|
\lesssim
R' 2^{jr_1+j's'-J(j')(r_1-s)}
\sqrt{\frac{2^{J(j')d}+\log ((\log N)/\delta)}{N}} .
\end{align}
Indeed, since the population cross-covariance vanishes, the OVB contribution is
controlled by the empirical fluctuation
\[
\left\|
(\widehat{\bar\Sigma}-\bar\Sigma)_{ I_{j'},I_{j'}^c}
\bfD^{-(r_1-s)}
\right\|=\left\|
(\widehat{\bar\Sigma})_{ I_{j'},I_{j'}^c}
\bfD^{-(r_1-s)}
\right\|.
\]
By the sample cross-covariance bound, uniformly over \(j'\),
\begin{align*}
\left\|
(\widehat{\bar\Sigma}-\bar\Sigma)_{ I_{j'},I_{j'}^c}
\bfD^{-(r_1-s)}
\right\|
&\lesssim
2^{-J(j')(r_1-s)}
\left(
\sqrt{\frac{2^{J(j')d}+\log((\log N)/\delta)}{N}}
+
\frac{2^{J(j')d}+\log((\log N)/\delta)}{N}
\right).
\end{align*}
Since \(2^{J(j')d}\lesssim N\) on \(\mcE_N\), the linear term is dominated by
the square-root term, and hence
\[
\left\|
(\widehat{\bar\Sigma})_{ I_{j'},I_{j'}^c}
\bfD^{-(r_1-s)}
\right\|
\lesssim
2^{-J(j')(r_1-s)}
\sqrt{\frac{2^{J(j')d}+\log((\log N)/\delta)}{N}}.
\]
Combining this estimate with the blockwise regularity bound for \(\bfA\)
gives \eqref{eq:ovb_block_bound_completed_2}.

It remains to check that, after applying the weights \(2^{-jt-j't'}\), the
leading deterministic factor in \eqref{eq:ovb_block_bound_completed_2} is of
order \(N^{-\gamma}\). Ignoring logarithmic factors, it suffices to prove
\[
2^{-jt-j't'}
2^{jr_1+j's'-J(j')(r_1-s)}
\sqrt{\frac{2^{J(j')d}}{N}}
\lesssim N^{-\gamma},
\]
or equivalently,
\[
2^{-J(j')(r_1-s)}
\sqrt{\frac{2^{J(j')d}}{N}}\,
2^{-jt-j't'+jr_1+j's'}
\lesssim N^{-\gamma}.
\]
Equivalently, it suffices to show
\[
2^{-J(j')(r_1-s-d/2)+j(r_1-t)-j'(t'-s')}
\lesssim N^{1/2-\gamma}.
\]
For \((j,j')\in\mcE_N\), we have
\[
j\le J(j')
=\frac{\gamma\log_2N-j'(t'-s')}{t-s}.
\]
Therefore
\[
\begin{aligned}
&-J(j')\Bigl(r_1-s-\frac d2\Bigr)+j(r_1-t)-j'(t'-s')\\
&\le
-J(j')\Bigl(r_1-s-\frac d2\Bigr)
+J(j')(r_1-t)_{+}
-j'(t'-s')\\
&=
J(j')\left((r_1-t)_{+}-r_1+s+\frac d2\right)
-j'(t'-s').
\end{aligned}
\]
If \(r_1<t\), then
\[
(r_1-t)_{+}-r_1+s+\frac d2
=
-r_1+s+\frac d2<0,
\]
by the assumption \(r_1-s>d/2\). Hence the exponent is nonpositive.

It remains to consider \(r_1\ge t\). In this case,
\[
(r_1-t)_{+}-r_1+s+\frac d2
=
\frac d2-(t-s).
\]
If \(t-s\ge d/2\), the exponent is again nonpositive. If \(t-s<d/2\), then
\[
\begin{aligned}
&-J(j')\Bigl(r_1-s-\frac d2\Bigr)+j(r_1-t)-j'(t'-s')\\
&\le
J(j')\Bigl(\frac d2-(t-s)\Bigr)\\
&\le
\gamma\log_2N
\left(\frac{d}{2(t-s)}-1\right)\\
&\le
\left(\frac12-\gamma\right)\log_2N,
\end{aligned}
\]
where the last inequality follows from
\[
\gamma\le \frac{t-s}{2(r_1-s)+d}\le \frac{t-s}{d}.
\]
This proves
\[
2^{-J(j')(r_1-s-d/2)+j(r_1-t)-j'(t'-s')}
\lesssim N^{1/2-\gamma},
\]
and hence the OVB contribution is bounded by \(N^{-\gamma}\), up to logarithmic
factors.

\end{document}